\newcommand*{\dt}[1]{%
  \accentset{\mbox{\large\bfseries .}}{#1}}
\newcommand*{\ddt}[1]{%
  \accentset{\mbox{\large\bfseries .\hspace{-0.25ex}.}}{#1}}
\newtheorem{theorem}{Theorem}[section]
\newtheorem{lemma}[theorem]{Lemma}
\newtheorem{prop}[theorem]{Proposition}
\theoremstyle{definition}
\theoremstyle{remark}
\newtheorem{remark}[theorem]{Remark}
\numberwithin{equation}{section}
\newcommand\reallywidehat[1]{
\savestack{\tmpbox}{\stretchto{
  \scaleto{
    \scalerel*[\widthof{\ensuremath{#1}}]{\kern.1pt\mathchar"0362\kern.1pt}
    {\rule{0ex}{\textheight}}
  }{\textheight}
}{2.4ex}}
\stackon[-6.9pt]{#1}{\tmpbox}
}
\newcommand*{\rom}[1]{\expandafter\@slowromancap\romannumeral #1@}
\begin{document}

\title{On time periodic solutions to the conformal cubic wave equation on the Einstein cylinder}

\author{Athanasios Chatzikaleas}

\address{Laboratory Jacques-Louis Lions (LJLL), University Pierre and Marie Curie (Paris 6), 4 place Jussieu, 75252 Paris, France}
\email{athanasios.chatzikaleas@ljll.math.upmc.fr}

\dedicatory{}
\begin{abstract}
%
%
We consider the conformal wave equation on the Einstein cylinder with a defocusing cubic non-linearity studied in \cite{Bizon1}. Motivated by a method developed by Rostworowski-Maliborski \cite{13033186} on the existence of time periodic solutions to the spherically symmetric Einstein-Klein-Gordon system, we study perturbations around the zero solution as a formal series expansion and assume that the perturbations bifurcate from one mode. In the center of this work stands a rigorous proof on how one can choose the initial data to cancel out all secular terms in the resonant system. Interestingly, our analysis reveals that the only possible choice for the existence of time periodic solutions is when the error terms in the expansion are all proportional to the dominant one mode. Finally, we use techniques from ordinary differential equations and establish the existence of time periodic solutions for initial data proportional to the first mode of the linearized operator.
\end{abstract}

\maketitle
\tableofcontents
\noindent
\section{Introduction}
\subsection{Conformal cubic wave equation on the Einstein cylinder }
In this note, we are interested in the propagation of non-linear relativistic scalar waves in the Anti-de-sitter (AdS) spacetime in $1+3$ dimensions. The AdS spacetime $\left(\mathcal{M}^{1+3},g_{AdS} \right)$ is the unique maximally symmetric solution to the Einstein's equations in vacuum 
\begin{align*}
	R_{ \alpha \beta} -\frac{1}{2} g _{\alpha \beta} R + \Lambda g_{\alpha \beta} = 0
\end{align*}
with negative cosmological constant 
\begin{align*}
	\Lambda = - 3.
\end{align*} 
With respect to local spherical coordinates on the underlying manifold 
\begin{align*}
    (t,r,\theta,\phi) \in \mathcal{M}^{1+3}:=
    \mathbb{R} \times [0,\infty) \times [0,\pi] \times [0,2\pi)
\end{align*}
this solution reads
\begin{align*}
	g_{\text{AdS}} (t,r,\theta,\phi) = -\left( 1+r^2 \right) d t ^2 + \left( 1+r^2 \right)^{-1} d r^2 + r^2 \left(
	d \theta^2 + \sin^2(\theta) d \phi^2
	\right).
\end{align*}
Following a recent work of Bizo\'{n}-Craps-Evnin-Hunik-Luyten-Maliborski \cite{Bizon1}, we consider a real scalar field 
\begin{align*}
	u:(\mathcal{M}^{1+3},g_{\text{AdS}}) \longrightarrow \mathbb{R}
\end{align*}
which propagates in the $(1+3)-$dimensional AdS spacetime satisfying the semi-linear wave equation  
 \begin{align}\label{kurioseksisosi}
 	\left(\Box_{ g_{\text{AdS} } } - \frac{1}{6} R(g_{\text{AdS}} ) \right) u = \lambda u^3
 \end{align}
with a defocusing nonlinearity, meaning $\lambda >0$. Here, $\Box_{ g_{\text{AdS} } }$ stands for the wave operator 
and $R(g_{\text{AdS}} )$ denotes the scalar curvature both with respect to the AdS metric. It is convenient to compactify $\left( \mathcal{M}^{1+3},g_{\text{AdS}} \right)$ and introduce $\psi=\arctan(r)$. Now, the new spatial coordinates vary within a bounded region
\begin{align*}
	(t,\psi,\theta,\phi) \in  \mathcal{M}^{1+3} \simeq
    \mathbb{R} \times \left[0,\frac{\pi}{2} \right) \times [0,\pi] \times [0,2\pi)
\end{align*}
and the AdS solution takes the form
\begin{align}\label{conformal}
	g_{\text{AdS}}(t,\psi,\theta,\phi) = \Omega^2(\psi) g_{ \mathcal{E} }(t,\psi,\theta,\phi),\quad \Omega(\psi):=\frac{1}{\cos (\psi)}
\end{align}
implying that it is conformal to half of the full Einstein static universe
\begin{align*}
	(t,\psi,\theta,\phi) \in  \mathcal{E}^{1+3}:=
    \mathbb{R} \times \left[0,\pi \right] \times [0,\pi] \times [0,2\pi)
\end{align*}
endowed with the metric
\begin{align*}
g_{\mathcal{E}}(t,\psi,\theta,\phi):=	- dt^2 + d\psi ^2 + \sin ^2 (\psi) \left(
 d\theta^2 + \sin^2 (\theta) d \phi^2
\right).
\end{align*}
As in \cite{Bizon1}, one can use the conformal structure \eqref{conformal} together with the conformal change of variables $v:=\Omega u$ to simplify the equation. Specifically, one can write \eqref{kurioseksisosi} as
\begin{align*}
 	\left(\Box_{ \Omega^2  g_{\mathcal{E} } } - \frac{1}{6} R(\Omega^2  g_{\mathcal{E} }  ) \right) \left( \Omega^{-1} v \right) = \lambda \left( \Omega^{-1} v \right)^3 
\end{align*}
and use the identity
\begin{align*}
	 \Box_{ \Omega^2  g_{\mathcal{E} } } - \frac{1}{6} R(\Omega^2  g_{\mathcal{E} }  ) =
	  \Omega^{-3}  \left(\Box_{ g_{\mathcal{E} } } - \frac{1}{6} R( g_{\mathcal{E} }  ) \right)
\end{align*}
to infer
\begin{align*}
	\left(\Box_{ g_{\mathcal{E} } } - \frac{1}{6} R(g_{\mathcal{E} }  ) \right)  v = \lambda   v^3.
\end{align*}
Next, we factor out the parameter $\lambda$ just by replacing $v$ with $w / \sqrt{\lambda}$. We get
\begin{align*}
	\left(\Box_{ g_{\mathcal{E} } } - \frac{1}{6} R(g_{\mathcal{E} }  ) \right)  w =  w^3
\end{align*}
and compute
\begin{align*}
	\Box_{ g_{\mathcal{E} } }= - \partial_{t}^2 + \Delta_{(\psi,\theta,f)}^{\mathbb{S}^3}, \quad  R(g_{\mathcal{E}}) = 6,
\end{align*}
where
\begin{align*}
	\Delta_{(\psi,\theta,\phi)}^{\mathbb{S}^3} w = 
	\frac{1}{\sin^2(\psi)} 
	\Bigg[
	\partial_{\psi} \left(\sin^2(\psi) \partial_{\psi} w \right)+
	\frac{1}{\sin(\theta)}
	\left(
	 \partial_{\theta} \left(\sin(\theta) \partial_{\theta} w \right)+
	\frac{1}{\sin(\theta)} 
	\partial_{\phi}^2 w
	\right)
	\Bigg]
\end{align*}
stands for the Laplace-Beltrami operator on $\mathbb{S}^3$. We obtain
\begin{align*}
	- \partial_{t}^2 w(t,\psi,\theta,\phi)+ \Delta_{(\psi,\theta,\phi)}^{\mathbb{S}^3}w(t,\psi,\theta,\phi) - w(t,\psi,\theta,\phi) = w^3(t,\psi,\theta,\phi),
\end{align*}
for $(t,\psi,\theta,\phi) $ varying only within half of $\mathcal{E}^{1+3}$, namely 
\begin{align*}
(t,\psi,\theta,\phi) \in \mathbb{R} \times \left[0, \frac{\pi}{2} \right) \times [0,\pi] \times [0,2\pi). 	
\end{align*}
Now, we look for rotationally symmetric solutions of the form $w(t,\psi)$ as in \cite{Bizon1}. In order to have reflective boundary conditions at the conformal infinity $\{\psi = \frac{\pi}{2} \}$ we assume the Dirichlet condition $w\left(t,\frac{\pi}{2} \right)=0$ and extent the solution from the half to the full Einstein cylinder $\mathcal{E}^{1+3}$ using the reflection symmetry
\begin{align*}
	- w(t,\psi) = w(t,\pi - \psi).
\end{align*}
Next, we map
\begin{align*}
	- \partial_{t}^2 w(t,\psi) + \frac{1}{\sin^2(\psi)} 
	\partial_{\psi} \left(\sin^2(\psi) \partial_{\psi} w(t,\psi) \right) - w(t,\psi) = w^3(t,\psi)
\end{align*}
to the $1-$dimensional wave equation
\begin{align*}
	- \partial_{t}^2 f(t,\psi) +  \partial_{\psi}^2 f(t,\psi) = \frac{f^3(t,\psi)}{\sin^2(\psi)}
\end{align*}
via the transformation $f(t,\psi) = \sin(\psi) w(t,\psi)$ for all $(t,\psi) \in \mathbb{R} \times [0,\pi]$. Finally, we formally compute
\begin{align*}
	& \frac{f^3(t,\psi)}{\sin^2(\psi)} = \frac{f^3(t,0)}{\psi^2} + \frac{3f^2(t,0) \partial_{\psi}f(t,0)}{\psi} + \mathcal{O} \left(1 \right), \text{ for } \psi \longrightarrow 0+, \\
	& \frac{f^3(t,\psi)}{\sin^2(\psi)} = \frac{f^3(t,\pi)}{(\pi-\psi)^2} - \frac{3f^2(t,\pi) \partial_{\psi}f(t,\pi)}{\pi-\psi} + \mathcal{O} \left(1 \right), \text{ for } \psi \longrightarrow \pi-
\end{align*}
and hence, in order to ensure the regularity of the solutions, we impose the Dirichlet boundary conditions
\begin{align*}
	f(t,0) = f(t,\pi) = 0,
\end{align*}
for all times. 
\subsection{Structure of the paper}
We consider the initial value problem which consists of the cubic wave equation on the Einstein cylinder
\begin{align}\label{maineq11}
	 -  \partial_{t}^2 f (t,\psi) +  \partial_{\psi}^2 f (t,\psi) = \frac{f^3(t,\psi)}{\sin^2(\psi)}, \quad (t,\psi) \in \mathbb{R} \times \left(0,\pi \right), 
\end{align}
subject to Dirichlet boundary conditions
\begin{align}\label{maineq22}
	 f(t,0) = f(t,\pi) = 0, \quad t \in \mathbb{R}.
\end{align}  
Motivated by \cite{Bizon1} where the existence of stationary states to \eqref{maineq11}-\eqref{maineq22} is discussed as well as by \cite{MR4026950}, our work is directed towards a twofold aim. From the one hand, we investigate the non-linear stability of the zero solution to \eqref{maineq11}-\eqref{maineq22} and on the other hand the existence of time periodic solutions. First, we use standard techniques together with Hardy-Sobolev inequalities to establish the local well-posedness to \eqref{maineq11}-\eqref{maineq22} in the energy space $H^{1}[0,\pi] \times L^2[0,\pi]$ (Appendix \ref{LWP}) so that the present work is self-contained. Next, we discuss recent advances about the existence of periodic and quasi-periodic solutions (section \ref{TPingeneral}) and in particular we explain the method of Rostworowski-Maliborski \cite{13033186} developed for the Einstein-Klein-Gordon system within spherical symmetry (section \ref{RostworowskiMaliborski}) 
which shows how the initial data contribute to the production of secular terms, i.e. terms which spoil the periodicity of the solutions. To begin our analysis, we consider perturbations bifurcating from one mode using a formal series expansion 
	\begin{align*}
		f(t,\psi)
		 =\epsilon   \cos \left( \left(1+\mathcal{O}(\epsilon^2) \right) t \right) e_{\gamma}(\psi) + \mathcal{O} \left(\epsilon^2 \right)
	\end{align*}
	where $e_{i}$ for $ i=0,1,2,\dots$ are the eigenfunctions associated to the linearized operator coupled to Dirichlet boundary conditions and for simplicity we choose $\gamma=0$. In addition, we use the method of Rostworowski-Maliborski \cite{13033186} to see how secular terms occur for the non-linear problem \eqref{maineq11} (section \ref{Preliminaries}). Then, we rigorously prove how one can prescribe the initial data to cancel all secular terms to \eqref{maineq11}-\eqref{maineq22} and finally we establish the existence of time periodic solutions (section \ref{Statement}). 
\subsection{Statement of the main results}	Specifically, we work towards proving the following two results. We refer the reader to Theorem \ref{maintheorem} and Theorem \ref{maintheorem2} respectively for more detail versions of these results.
\begin{theorem}
	Let $f$ be a solution to the boundary value problem \eqref{maineq11}-\eqref{maineq22} with initial data $\left(f(0,\cdot),\partial_{t}f(0,\cdot) \right)$ where $\partial_{t}f(0,\cdot) =0$. We assume that $f$ admits the representation as a formal power series as in \eqref{series}. Then, the corresponding resonant system admits no secular term if and only if the initial data are all proportional to $e_{0}$. 
\end{theorem}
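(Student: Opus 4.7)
The plan is to feed the Poincar\'e--Lindstedt expansion
\[
f(t,\psi) = \sum_{k\geq 1}\epsilon^{2k-1} f_{2k-1}(\tau,\psi), \qquad \tau = \omega(\epsilon)\,t, \qquad \omega(\epsilon) = 1 + \sum_{k\geq 1}\epsilon^{2k}\omega_{2k},
\]
into $-\omega^2\partial_\tau^2 f + \partial_\psi^2 f = f^3/\sin^2\psi$ and to expand each $f_{2k-1}(\tau,\psi) = \sum_{j\geq 0} f_{2k-1,j}(\tau)\,e_j(\psi)$ in the Dirichlet basis $e_j(\psi) = \sqrt{2/\pi}\sin((j+1)\psi)$, with eigenfrequencies $\omega_j = j+1$. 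The hypothesis $\partial_t f(0,\cdot)\equiv 0$ yields $\dot f_{2k-1,j}(0)=0$ for all $k,j$; bifurcation from $e_0$ gives $f_1(\tau,\psi) = \cos\tau\,e_0(\psi)$; and cancellation of the order-three $e_0$-resonance fixes $\omega_2 = 3c^2/8$ with $c^2:=2/\pi$. The free parameters remaining at each order $2k-1$ are the homogeneous-mode amplitudes $B_{2k-1,j}:=f_{2k-1,j}(0)$, and the theorem is equivalent to the assertion that $B_{2k-1,j}=0$ for every $k\geq 1$ and every $j\geq 1$.

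\textbf{Necessity via induction.} The structural fact that makes the argument work is the identity $e_0^2/\sin^2\psi = 2/\pi$, which forces any triple product $(g_1 e_0)(g_2 e_0)(g_3 e_0)/\sin^2\psi$ to lie in $\mathrm{span}(e_0)$. I argue by strong induction on $k\geq 1$: assume $f_{2k'-1}(\tau,\cdot)\in\mathrm{span}(e_0)$ for every $k'\leq k$. Then every cubic term and every frequency correction in the order $2k+1$ equation contributes only to the $e_0$-direction, so the $e_j$-projection ($j\geq 1$) is homogeneous and combines with $\dot f_{2k+1,j}(0)=0$ to give $f_{2k+1,j}(\tau) = B_{2k+1,j}\cos((j+1)\tau)$. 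At order $2k+3$, the $e_j$-projection of the source picks up contributions only from $3f_1^2 f_{2k+1}/\sin^2\psi$ and from $2\omega_2\,\partial_\tau^2 f_{2k+1}$, since every other cubic term and every other frequency correction involves three $e_0$-proportional lower-order factors and is annihilated by the projection. Expanding $\cos^2\tau\cos((j+1)\tau)=\tfrac12\cos((j+1)\tau)+\tfrac14\cos((j-1)\tau)+\tfrac14\cos((j+3)\tau)$ and isolating the $\cos((j+1)\tau)$-resonant part yields the coefficient
\[
\Bigl(\tfrac{3c^2}{2} - 2\omega_2(j+1)^2\Bigr) B_{2k+1,j} = \tfrac{3c^2}{4}\bigl(2-(j+1)^2\bigr)\,B_{2k+1,j},
\]
which is strictly negative for $j\geq 1$ and so vanishes only when $B_{2k+1,j}=0$. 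This forces $f_{2k+1}(0,\cdot)\in\mathrm{span}(e_0)$, closing the induction.

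\textbf{Sufficiency and the main obstacle.} For the converse direction, the induction with every $B_{2k-1,j}=0$ ($j\geq 1$) produces each $f_{2k-1}$ as a finite trigonometric polynomial in $\tau$ with values in $\mathrm{span}(e_0)$; the $e_0$-projection at each order $2k+1$ is a driven harmonic oscillator whose $\cos\tau$-resonant coefficient is affine in $\omega_{2k}$ with non-zero slope (contributed by $-2\omega_{2k}\ddot f_1 = 2\omega_{2k}\cos\tau\,e_0$), so $\omega_{2k}$ can be uniquely chosen to kill the secular term, and the non-resonant part of the source is inverted by undetermined coefficients to give a bounded particular solution. The main obstacle, and the point where the argument could have collapsed, is the combinatorial proliferation at higher orders of cubic cross-terms $f_1 f_3 f_{2k-1}$, $f_3^2 f_{2k-3}$, \dots, and of mixed frequency corrections $(2\omega_{2\ell}+\cdots)\partial_\tau^2 f_{2m-1}$ in the expansion of $\omega^2\partial_\tau^2 f$; the single identity $e_0^2/\sin^2\psi=2/\pi$ disposes of all of them uniformly by trapping every triple of $e_0$-proportional functions inside $\mathrm{span}(e_0)$, leaving only the self-coupling $3f_1^2 f_{2k+1}$ as the channel through which a free parameter $B_{2k+1,j}$ can create a secular term, with the explicit coefficient above precluding any accidental cancellation.
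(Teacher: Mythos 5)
Your argument is, at its core, the same as the paper's: a Poincar\'e--Lindstedt expansion, projection onto the Dirichlet eigenbasis, and an induction in the order of $\epsilon$ whose engine is the identity $e_0^2/\sin^2\psi=\mathrm{const}$ (in the paper this appears as $C^{(0)}_{0jk}=\delta_j^k$, $C^{(m)}_{i00}=\delta_i^m$, $C^{(m)}_{000}=\delta_0^m$, which is exactly how Cases 1--4 of Steps 3A and 3B collapse). Your identification of the only two channels feeding the $e_j$-resonance at the next order --- the self-coupling $3f_1^2f_{2k+1}/\sin^2\psi$ and the first frequency correction acting on $f_{2k+1}$ --- matches the paper's Case 3 of Step 3B together with the $(\mu,\nu)=(\lambda-2,2)$ term of the frequency sum, and your coefficient $\tfrac{3c^2}{4}\bigl(2-(j+1)^2\bigr)$ plays the role of the paper's $\tfrac18(1+3m)$; the constants differ because of your normalization of $e_0$ and the bookkeeping of ordered triples, but both are manifestly nonzero for every $j\ge 1$, which is all that is used. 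The sufficiency direction (tuning the frequency shift to kill the unique $e_0$-secular term) is likewise the paper's Step 3A.

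The one genuine gap is that you have silently restricted the expansion \eqref{series} to odd powers of $\epsilon$. The theorem is stated for the full series $f=\sum_{\lambda\ge0}f_\lambda\epsilon^\lambda$, and the even orders are not decorative: $f_2$ solves the homogeneous equation, so $f_2=\sum_m f_2^{(m)}(0)\cos(\omega_m\tau)e_m$ with free initial data, and at order $4$ the channel $3f_1^2f_2/\sin^2\psi$ produces the secular terms $\tfrac98\,\tau\sin(2\tau)\,f_2^{(1)}(0)$, $\tfrac{11}{8}\,\tau\sin(3\tau)\,f_2^{(2)}(0)$, \dots\ listed in Section \ref{iterations}, while the $m=0$ even data enter the odd frequency shifts (e.g.\ $\theta_3=-3f_2^{(0)}(0)$). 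The ``only if'' direction therefore also has to force $f_{2\lambda}^{(m)}(0)=0$ for all $m\ge1$, which your induction never sees. The mechanism you describe extends verbatim to these orders (the same two channels survive, with the same nonvanishing multiplier), so the repair is routine, but as written the necessity claim is proved only on the odd-order subspace of the ansatz. A secondary, more cosmetic point: in the sufficiency direction you assert that each $f_{2k+1}^{(0)}$ remains a finite cosine polynomial in odd multiples of $\tau$; the paper actually verifies the frequency bookkeeping (the bound $|1+2A|\le 1+2\left[\frac{\lambda-1}{2}\right]$) that guarantees the non-resonant part can be inverted without creating new resonances, and your write-up should at least record why the frequencies stay odd and bounded.
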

In particular, we prove that time periodic solutions in the neighbourhood of the $e_{0}$ mode arise necessarily from such initial data.
\begin{theorem}
All initial data of the form $\left( f(0,\cdot), \partial_{t}f(0,\cdot) \right)=\left(G(0) e_0,0 \right)$ evolve into time periodic solutions.
\end{theorem}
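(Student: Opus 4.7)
The plan is to exploit an algebraic feature of the first eigenmode: since $e_0$ is proportional to $\sin\psi$, the nonlinearity $e_0^3/\sin^2\psi$ is again a scalar multiple of $e_0$. For initial data of the form $(G(0)e_0, 0)$ this collapses the PDE \eqref{maineq11}--\eqref{maineq22} to a scalar second-order ODE for the amplitude $G(t)$, which can then be analyzed by elementary phase-plane methods.

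First I would make the separated ansatz $f(t,\psi) = G(t) e_0(\psi)$ and substitute it into \eqref{maineq11}. Writing $e_0(\psi) = c_0 \sin\psi$, one has $-\partial_\psi^2 e_0 = e_0$ and $e_0^3(\psi)/\sin^2\psi = c_0^2\, e_0(\psi)$, so after cancelling the common factor $e_0$ the PDE reduces to
\begin{align*}
\ddot G(t) + G(t) + c_0^2\, G^3(t) = 0, \qquad G(0) \text{ prescribed},\quad \dot G(0) = 0.
\end{align*}
This is a one-dimensional Hamiltonian system with conserved, coercive, nonnegative energy
\begin{align*}
H(G,\dot G) = \frac{1}{2}\dot G^2 + \frac{1}{2} G^2 + \frac{c_0^2}{4} G^4 .
\end{align*}
Coercivity of $H$ forces every orbit to lie on a bounded, hence compact, level set, yielding global existence in time. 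The origin is the unique critical point of $H$ and the unique equilibrium of the flow, so for $G(0) \neq 0$ the level set through $(G(0),0)$ is a simple closed curve encircling the origin and is traversed periodically with some period $T = T(G(0))$; the degenerate case $G(0) = 0$ gives the trivial solution, which is also periodic.

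Next I would promote the ODE solution to a PDE solution and invoke uniqueness. The function $f(t,\psi) = G(t) e_0(\psi)$ belongs to $C^1(\mathbb{R}; H^1[0,\pi])$, satisfies the Dirichlet conditions because $e_0$ does, and solves \eqref{maineq11} classically on $(0,\pi)$. The local well-posedness of \eqref{maineq11}--\eqref{maineq22} in the energy space $H^1[0,\pi]\times L^2[0,\pi]$ proved in Appendix~\ref{LWP}, combined with the uniform a priori bound coming from conservation of $H$, identifies $f$ as the unique solution with initial data $(G(0)e_0, 0)$; its time-periodicity follows directly from that of $G$.

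The only substantive ingredient is the algebraic identity $e_0^3/\sin^2\psi = c_0^2\, e_0$, which is precisely what closes the separated ansatz. This identity fails for every higher eigenfunction $e_\gamma$ with $\gamma \geq 1$, and there is no obstacle worth highlighting beyond recognizing it. This is consistent with the preceding theorem, which already shows that proportionality to $e_0$ is the only way a single-mode ansatz can avoid secular terms; the present result is therefore the natural rigorous counterpart, and no resonant-system machinery is needed in this special case.
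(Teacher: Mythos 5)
Your proposal is correct and follows essentially the same route as the paper: the separated ansatz $f(t,\psi)=G(t)e_{0}(\psi)$ reduces \eqref{maineq11}--\eqref{maineq22} to the cubic oscillator \eqref{ODE} (with $c_{0}=1$ in the paper's normalization $e_{0}(\psi)=\sin\psi$), and periodicity is then read off from the conserved energy and the compactness of its nonzero level sets, exactly as in the paper's phase-plane argument. Your additional step of invoking the Appendix~\ref{LWP} well-posedness to identify the separated solution as \emph{the} evolution of the data $(G(0)e_{0},0)$ is a welcome clarification that the paper leaves implicit.
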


\subsection{Acknowledgments}
The author would like to express his sincere gratitude to Professor Jacques Smulevici for very useful communications, comments and insights. Also, the author gratefully acknowledges the support of the ERC grant 714408 GEOWAKI, under the European Union's Horizon 2020 research and innovation program.
\section{Time periodic solutions to non-linear wave equations} \label{TPingeneral}
\subsection{Discussion}
The existence of periodic and quasi-periodic solutions to nonlinear wave equations has attracted a lot of attention in the last years in the mathematical analysis community. Such studies were initiated by Rabinowitz \cite{MR470378} and Brezis-Coron-Nirenberg \cite{MR586417} who considered $1-$dimensional non-linear wave equations of the form
\begin{align*}
-\partial_{t}^2 f(t,x)+ \partial_{x}^2 f(t,x) = N(x,f(t,x)),
\end{align*}
together with boundary conditions as well as reasonable assumptions on the non-linearity, and proved the existence and regularity of periodic solutions. Since then, extensive studies have been made concentrated around the mechanism responsible for the existence of time periodic solutions for various partial differential equations. For example, the reader can consult the works of Baldi-Berti-Haus-Montalto-Alazard-Baldi for gravity water waves \cite{MR3867631, MR4062430, MR3625065, MR3356988}, the works of Wyane-Kuksin-Berti-Corsi-Precesi-Biasco-Bolle-Bourgain for the non-linear wave equations \cite{MR911772, MR1040892, MR3312439, MR3073240, MR2967117, MR2592290, MR2038735}, the works of Berti-Bolle-Corsi-Procesi-Delort for the non-linear Schr\"{o}dinger equation \cite{MR2998835, MR3312439, MR2813578, MR2901562}, the work of Montalto for the forced Kirchhoff equation \cite{MR3603787}, the work of Baldi-Berti-Montalto-Lax on the KdV equation \cite{MR0344645, MR404889, MR369963, MR3237812}, the work of on the Ambrose-Wilkening for the Benjamin-Ono equation \cite{MR2639896}, among others. The main difficulty to establish the existence of time periodic solutions stems from the presence of small divisors in the perturbation series around equilibrium points. Such a problem can be particularly difficult especially in the case where the non-linearity contains derivatives of the dynamical variable. The key ingredients used to overcame this difficulty are the KAM theory and the Nash-Moser implicit function theorem \cite{MR2070057, MR3502157, MR1986317, MR2496651, MR1754991, MR1857574, MR2842962}. \\ \\
Some of the methods already existing in the substantial literature can be also used to obtain the existence of time periodic solutions to \eqref{maineq11}-\eqref{maineq22}, possibly with modifications. For example, one can use the method of Bambusi-Paleari \cite{MR1819863} who considered wave equations of the form
\begin{align*}
	-\partial_{t}^2 f (t,x)+\partial_{x}^2 f (t,x) = N(f(t,x)), \quad (t,x) \in \mathbb{R} \times (0,\pi)
\end{align*}
with non-linearities satisfying
\begin{align*}
	N \in C^4,\quad N(0)=N^{\prime}(0)=N^{\prime \prime}(0)=0, \quad N^{\prime \prime \prime}(0) \neq 0
\end{align*}
together with the Dirichlet boundary conditions 
\begin{align*}
	 f(t,0) = f(t,\pi) = 0, \quad t \in \mathbb{R}
\end{align*} 
and constructed some families of time periodic solutions with small amplitude using averaging methods meaning suitable maps from the configurations space to itself. Specifically, their method relies on the Lyapunov-Schmidt decomposition \cite{MR1239318} and a non-degeneracy condition. Most importantly, their method focuses on the fact that each non-degeneracy zero of such maps implies a whole family of small amplitude time periodic solutions to the wave equation above revealing an interesting correspondence between the Lyapunov-Schmidt decomposition and averaging theory. Although the non-linearity they considered depends only of $f$ and not on $(x,f)$, one can still adapt their method to obtain time periodic solutions to \eqref{maineq11}-\eqref{maineq22}. However, we emphasize that the regularity of the nonlinear term in \eqref{maineq11} prevent from a direct application of the method described above. In addition, one can also use the method of Berti-Bolle \cite{MR2248834} who considered one dimensional wave equations of the form
\begin{align*}
\begin{cases}
	 -\partial_{t}^2 f (t,x)+\partial_{x}^2 f (t,x) = \alpha_{3}(x) f^3(t,x) + \mathcal{O}(f^4), \\
	  f(t,0) = f(t,\pi) = 0
\end{cases}
\end{align*}
and proved the existence of small amplitude periodic in time solutions with period $\frac{2 \pi}{\omega}$ for any frequency $\omega$ in a Cantor-like set. This Cantor set arises from non-resonance conditions imposed on the frequency $\omega$ and is essential as it allows to overcame the small-divisors problem. However, in this work, we do not address the convergence of the perturbative series expansion and hence such a Cantor set does not occur. To establish the existence of time periodic solutions, we first focus on how one can rigorously cancel out all secular terms for the initial value problem \eqref{maineq11}-\eqref{maineq22} by prescribing the initial data. To do so, we use a method of Rostworowski-Maliborksi \cite{13033186} developed for the Einstein-Klein-Gordon equation. 

\subsection{The method of Rostworowski-Maliborski}\label{RostworowskiMaliborski}
Rostworowski-Maliborksi \cite{13033186} considered the Einstein-Klein-Gordon system which consists of the wave equation 
\begin{align}\label{mainRM1}
	\Box_{g} \phi = 0
\end{align}
for a real scalar field $\phi: \left( \mathcal{M},g \right) \longrightarrow \mathbb{R}$ coupled to the Einstein equations
\begin{align}\label{mainRM2}
	R_{ \alpha \beta} -\frac{1}{2} g _{\alpha \beta} R + \Lambda g_{\alpha \beta} = 8 \pi \left ( \partial _{\alpha} \phi \partial _{\beta} \phi - \frac{1}{2} g_{\alpha \beta} (\partial \phi)^2 \right )
\end{align}
with negative cosmological constant
\begin{align*}
\Lambda = -\frac{d(d-1)}{2 l^2 }, \quad  l \neq 0.
\end{align*}
In particular, they assumed that the scalar field $\phi$ propagates within $(1+d)-$dimensional spacetimes $\left( \mathcal{M}, g \right)$ for any $d\geq 2$ satisfying the spherically symmetric ansatz
\begin{align*}
	g(t,x,\omega) = \frac{l^2}{\cos ^2 (x)} \left( - \frac{ A(t,x)}{ e^{2\delta(t,x)}} dt^2 + \frac{1}{A(t,x)} dx^2+ \sin ^2 (x) d \omega ^2 \right),
\end{align*}
where the local coordinates read
\begin{align*}
	(t,x,\omega) \in  \mathcal{M}:= \mathbb{R} \times \left[0,\frac{\pi}{2} \right) \times \mathbb{S}^{d-1}.
\end{align*}
Then, one can define the auxiliary variables
\begin{align*}
	\Phi (t,x) = \partial_{x} \phi (t,x),~~~\Pi (t,x) = \frac{1}{A(t,x)e^{-\delta (t,x)} } \partial_{t} \phi(t,x)
\end{align*}
and express \eqref{mainRM1} equivalently as
\begin{align}
	\partial_{t} \Phi(t,x) &= \partial_{x} \left( A(t,x)e^{-\delta(t,x)} \Pi(t,x) \right), \label{EKG1}   \\  
	 \partial_{t} \Pi(t,x) &= -\reallywidehat{L} \left[ A(t,x)e^{-\delta (t,x)} \Phi (t,x) \right],  \label{EKG2}
\end{align}
where
\begin{align*}
	\reallywidehat{L}[g](x):= -\frac{1}{\tan^2(x)} \partial_{x} ( \tan^2(x) g(x)),
\end{align*} 
coupled to the Einstein equations \eqref{mainRM2} which are given by
\begin{align}
		(1-A(t,x))e^{-\delta(t,x)} &= \frac{\cos ^3 (x)}{\sin(x)} \int_{0}^{x} e^{-\delta(t,y)} \left(\Phi^2(t,y) + \Pi^2(t,y) \right) (\tan(y))^2 dy\label{EKG3}  \\ 
	-\delta(t,x) &= \int_{0}^{x} \left(\Phi^2(t,y) + \Pi^2(t,y) \right) \sin(y) \cos(y) dy.\label{EKG4}
\end{align}
Specifically, Rostworowski-Maliborksi \cite{13033186} provided reliable numerical evidence indicating that time-periodic solutions to \eqref{EKG1}-\eqref{EKG2}-\eqref{EKG3}-\eqref{EKG4} exist for non-generic initial data (islands of stability) and in addition constructed these solutions using both nonlinear perturbative expansions and numerical methods. Their work is closely related and builds upon an earlier numerical study of Bizo\'{n}-Rostworowski \cite{11043702} who considered \eqref{mainRM1}-\eqref{mainRM2} under the same spherically symmetric ansatz for $(1+3)-$dimensional spacetimes and established strong numerical results which show that the AdS solution 
\begin{align*}
	g_{AdS}(t,x,\omega) = \frac{l^2}{\cos ^2 (x)} \left(
	- dt^2 +  dx^2+ \sin ^2 (x) d \omega ^2 
	\right),
\end{align*}
to the Einstein equations (although linearly stable) is in fact nonlinearly unstable against the formation of a black hole under arbitrarily small and generic perturbations. The strong numerical evidence of Bizo\'{n}-Rostworowski \cite{11043702} support a conjecture on the instability of the AdS spacetime firstly announced by Dafermos \cite{DafermosTalk} and Dafermos-Holzegel \cite{DafermosHolzegel} in 2006. \\ \\

Rostworowski-Maliborksi \cite{13033186} constructed the desired periodic solutions to \eqref{EKG1}-\eqref{EKG2}-\eqref{EKG3}-\eqref{EKG4} using nonlinear perturbative expansions as well as numerical simulations. In the following lines we explain their first approach. To begin with, they sought solutions $(\Phi,\Pi, A, \delta)$ which are all close to the AdS solution $(0,0, 1, 0)$ and (although the series may not converge) considered the following perturbative expansions
\begin{align}
	&\Phi(t,x) =\sum_{\lambda=0}^{\infty} \psi_{2\lambda+1} (\tau,x)\epsilon^{2\lambda+1} = \psi_{1} (\tau,x)\epsilon + \psi_{3} (\tau,x)\epsilon^{3}+\psi_{5} (\tau,x)\epsilon^{5}+\dots,\label{series1} \\ 
	& \Pi(t,x)=\sum_{\lambda=0}^{\infty} \sigma_{2\lambda+1} (\tau,x)\epsilon^{2\lambda+1}=\sigma_{1} (\tau,x)\epsilon + \sigma_{3} (\tau,x)\epsilon^{3}+\sigma_{5} (\tau,x)\epsilon^{5}+\dots, \label{series2} \\
	& A(t,x) e^{-\delta(t,x)} =\sum_{\lambda=0}^{\infty} \xi_{2\lambda} (\tau,x)\epsilon^{2\lambda}= \xi_{0}(\tau,x) +\xi_{2}(\tau,x) \epsilon^2+\xi_{4}(\tau,x) \epsilon^4+\dots,\nonumber  \\
	& e^{-\delta(t,x)}=\sum_{\lambda=0}^{\infty} \zeta_{2\lambda} (\tau,x)\epsilon^{2\lambda}=\zeta_{0}(\tau,x) +\zeta_{2}(\tau,x) \epsilon^2+\zeta_{4}(\tau,x) \epsilon^4+\dots \nonumber \\
	& \tau = \Omega t, \quad
	\Omega  = \sum_{\lambda=0}^{\infty} \theta_{2\lambda} \epsilon^{2\lambda} = \theta_{0}+ 
	\theta_{2} \epsilon^{2}+\cdots.
\end{align}
where $\psi_{2\lambda+1}$,$\sigma_{2\lambda+1}$,$\xi_{2\lambda}$ and $\zeta_{2\lambda}$ are all periodic in time. With a slight abuse of notation we use the same letters to denote the dynamical variables with respect to the $(\tau,x)$ and $(t,x)$.
One can see \cite{13033186, SecondPublication} that the operator which governs the solutions to the linearized equation in \eqref{EKG1}-\eqref{EKG2} is given by
\begin{align*}
	L[f](x):= -\frac{1}{\tan^2(x)} \partial_{x} ( \tan^2(x) \partial_{x} f(x)).
\end{align*}
The linearized operator $L$ is self-adjoint with respect to the weighted inner product
\begin{align} \label{inner}
	(f|g):=\int _{0}^{\frac{\pi}{2}} f(x)g(x) \tan^{2}(x) dx,
\end{align}
its spectrum subject to Dirichlet boundary conditions is given by
\begin{align*}
	\omega^2_{j}:=(3+2j)^2,~j=1,2,\dots
\end{align*}
and the eigenfunctions are weighted Jacobi polynomials
\begin{align*}
	e_{j}(x):=2\frac{\sqrt{j! (j+2)!}}{\Gamma (j+\frac{3}{2})} \cos^3(x) P^{\frac{1}{2},\frac{3}{2}}_{j}(\cos(2x)),\quad x \in \left[0,\frac{\pi}{2} \right], \quad j=0,1,\dots.
\end{align*}
Alternatively (Lemma 2.2 in \cite{SecondPublication}), we have
\begin{align*}
	e_{i}(x) = \frac{2}{\sqrt{\pi}} \frac{ 1}{\sqrt{\omega_{i}^2-1}}  \left( \omega_{i} \frac{\sin\left( \omega_{i} x \right)}{\tan(x)} - \cos \left( \omega_{i}x \right) \right), \quad x \in \left[0,\frac{\pi}{2} \right], \quad j=0,1,\dots.
\end{align*}
In addition, we have the following orthogonality properties
\begin{align*}
	(e_{i}|e_{j}) = \delta_{i,j},\quad (e_{i}^{\prime}|e_{j}^{\prime}) = \omega_{i}^2 \delta_{i,j}
\end{align*}
and both $\{e_{j}\}$ and $\{ \frac{e^{\prime}_{j}}{\omega_{j}}\}$ form an orthonormal basis for the $L^2$ with respect to the inner product \eqref{inner}. All these results can be found in \cite{SecondPublication} (Lemma 2.2) and in \cite{MR2430631} (appendix). 
After solving at the linear level, we obtain the initial values
\begin{align*}
     \theta_{0} = \omega_{0}, \quad
	 \xi_{0}(\tau,x) = 1,\quad
	 \zeta_{0}(\tau,x) = 1
\end{align*}
and
\begin{align*}
\begin{cases}
	\omega_{\gamma} \partial_{\tau} \psi_{1}(\tau,x) - \partial_{x} \sigma_{1} (\tau,x) = 0, \\
	\omega_{\gamma} \partial_{\tau} \sigma_{1}(\tau,x)+ \reallywidehat{L} \left[ \psi_{1}(\tau,x) \right]=0.
\end{cases}
\end{align*}
From the set of all eigenfunctions $\{e_{i}:i=0,1,2,\dots\}$ to the linearized operator, we choose a dominant mode $e_{\gamma}$ for some $\gamma \in \{0,1,2,\dots \}$. For simplicity we choose $e_{0}$ and pick
\begin{align*}
\begin{cases}
	\psi_{1} (\tau,x) = \cos(\tau) e^{\prime}_{0}(x),\\
	\sigma_{1} (\tau,x) =  -\omega_{0} \sin(\tau) e_{0}(x).
\end{cases}
\end{align*}
First, we compute the density
\begin{align*}
	\Phi^2(t,x)+\Pi^2(t,x) = \sum_{\lambda=1}^{\infty} r_{2\lambda} \epsilon^{2\lambda}
\end{align*}
where, for all $\lambda = 0,1,2,\dots$,
\begin{align*}
	r_{2(\lambda+1)}(\tau,x) = \sum_{\substack{\mu,\nu=0 \\ \mu+\nu=\lambda }}^{\lambda}
	\left( \psi_{2\mu+1}(\tau,x)\psi_{2\nu+1}(\tau,x) +
	 \sigma_{2\mu+1}(\tau,x)\sigma_{2\nu+1} (\tau,x)
	\right).
\end{align*}
Next, we substitute these expressions into \eqref{EKG1}-\eqref{EKG2}-\eqref{EKG3}-\eqref{EKG4}, collect terms of the same order in $\epsilon$ and obtain a hierarchy of equations
\begin{align*}
\begin{dcases}
	\omega_{\gamma} \partial_{\tau} \psi_{2\lambda+1} (\tau,x) - \partial_{x} \sigma_{2\lambda+1} (\tau,x)= 
	\sum_{\substack{\mu,\nu=0 \\ \mu+\nu=\lambda \\ (\mu,\nu) \neq (\lambda,0) }}^{\lambda}
	\Big (
	- \omega_{\gamma,2\nu} \partial_{\tau} \psi_{2\mu+1} (\tau,x) + \partial_{x} \left( \xi_{2\nu}(\tau,x) \sigma_{2\mu+1}(\tau,x) \right)
	 \Big ), 
	\\
	 \omega_{\gamma} \partial_{\tau} \sigma_{2\lambda+1} (\tau,x) 
	+ \reallywidehat{L}  \left[ \psi_{2\lambda+1}(\tau,x) \right]
	= -  \sum_{\substack{\mu,\nu=0 \\ \mu+\nu=\lambda \\ (\mu,\nu) \neq (\lambda,0) }}^{\lambda} 
	\Big(
	\omega_{\gamma,2\nu}\partial_{\tau} \sigma_{2\mu+1}(\tau,x)+  \reallywidehat{L}  \left[ \xi_{2\nu}(\tau,x) \psi_{2\mu+1}(\tau,x) \right]
	\Big),\\
	  \xi_{2(\lambda+1)} (\tau,x) = \zeta_{2(\lambda+1)} (\tau,x) - \frac{\cos^3(x)}{\sin(x)} 
	\sum_{\substack{\mu,\nu=0 \\ \mu+\nu=\lambda  }}^{\lambda} \int_{0}^{x} r_{2(\mu+1)}(\tau,y) \zeta_{2\nu}(\tau,y) \tan^2(y) dy,\\
	 \zeta_{2(\lambda+1)} (\tau,x) =  
	\sum_{\substack{\mu,\nu=0 \\ \mu+\nu=\lambda }}^{\lambda} \int_{0}^{x} r_{2(\mu+1)}(\tau,y) \zeta_{2\nu}(\tau,y)\sin(y)\cos(y)dy
\end{dcases}
\end{align*}
for all $\lambda=0,1,2,\dots$. Since both $\{e_{j}\}$ and $\{ \frac{e^{\prime}_{j}}{\omega_{j}}\}$ form an orthonormal basis for $L^2([0,\frac{\pi}{2}])$ with respect to the inner product \eqref{inner}, we expand the coefficients $\psi_{2\lambda+1},\sigma_{2\lambda+1},\xi_{2\lambda},\zeta_{2\lambda}$ in terms of the eigenvalues of the linearized operator for each $\lambda=0,1,2,\dots$, namely
\begin{align*}
	& \psi_{2\lambda+1}(\tau,x)=\sum_{i=0}^{\infty} f_{2\lambda+1}^{(i)}(\tau) \frac{e^{\prime}_{i}(x)}{\omega_{i}}, \quad \sigma_{2\lambda+1}(\tau,x)=\sum_{i=0}^{\infty} g_{2\lambda+1}^{(i)} (\tau) e_{i}(x), \\
	& \xi_{2\lambda}(\tau,x)=\sum_{i=0}^{\infty} p_{2\lambda}^{(i)}(\tau) e_{i}(x), \quad \zeta_{2\lambda}(\tau,x)=\sum_{i=0}^{\infty} q_{2\lambda}^{(i)} (\tau) e_{i}(x),
\end{align*}
substitute these expressions into the recurrence relations above, take the inner product $(\cdot|e^{\prime}_{m})$ for the first equation, the $(\cdot|e_{m})$ for all the other equations and use their orthogonality properties $(e^{\prime}_{n}|e^{\prime}_{m})=\omega_{n}^2 \delta_{nm}$, $(e_{n},e_{m})=\delta_{nm}$, Lemma 2.2 \cite{SecondPublication}. 
Using the notation
\begin{align*}
	\dt{f}(\tau) = \frac{d f(\tau)}{d \tau},\quad 
\ddt{f} (\tau) = \frac{d^2 f(\tau)}{d \tau^2},
\end{align*}
we find
\begin{align*}
	& \omega_{\gamma} \dt{f}_{2\lambda+1}^{(m)} (\tau) = \omega_{m} g_{2\lambda+1}^{(m)} (\tau) +\sum_{\substack{\mu,\nu=0 \\ \mu+\nu=\lambda \\ (\mu,\nu) \neq (\lambda,0) }}^{\lambda}
	\left(
	-\omega_{\gamma,2\nu} \dt{f}_{2\mu+1}^{(m)}(\tau)+ \omega_{m}
	\sum_{i,j=0}^{\infty} C_{ij}^{(m)} p_{2\nu}^{(i)}(\tau) g_{2\mu+1}^{(j)}(\tau)
	\right), \\
	&
	\omega_{\gamma} \dt{g}_{2\lambda+1}^{(m)} (\tau) = - \omega_{m} f_{2\lambda+1}^{(m)} (\tau) -  \sum_{\substack{\mu,\nu=0 \\ \mu+\nu=\lambda \\ (\mu,\nu) \neq (\lambda,0) }}^{\lambda}
	\left(
	\omega_{\gamma,2\nu}  \dt{g}_{2\mu+1}^{(m)}(\tau) + \omega_{m} \sum_{i,j=0}^{\infty}   \overline{C}_{ij}^{(m)} p_{2\nu}^{(i)}(\tau) f_{2\mu+1}^{(j)}(\tau) \right),  \\
	& p_{2(\lambda+1)}^{(m)}(\tau) = 	\sum_{\substack{\rho,k,\nu=0 \\ \rho+k+\nu=\lambda  }}^{\lambda} \sum_{i,j,l=0}^{\infty}
	\Bigg(
	\widetilde{A}_{ijl}^{(m)} f_{2\rho+1}^{(i)}(\tau)f_{2k+1}^{(j)}(\tau)+\widetilde{B}_{ijl}^{(m)} g_{2\rho+1}^{(i)}(\tau)g_{2k+1}^{(j)}(\tau)
	\Bigg) q_{2\nu}^{(l)}(\tau),\nonumber \\
	& q_{2(\lambda+1)}^{(m)}(\tau) = 
	\sum_{\substack{\rho,k,\nu=0 \\ \rho+k+\nu=\lambda  }}^{\lambda} \sum_{i,j,l=0}^{\infty}
	\Bigg(
	\frac{\overline{A}_{ijl}^{(m)}}{\omega_{m}}  f_{2\rho+1}^{(i)}(\tau)f_{2k+1}^{(j)}(\tau)+
	\frac{\overline{B}_{ijl}^{(m)}}{\omega_{m}} g_{2\rho+1}^{(i)}(\tau)g_{2k+1}^{(j)}(\tau)
	\Bigg) q_{2\nu}^{(l)}(\tau),\nonumber
\end{align*} 
where all the interactions with respect to the spatial variable $x \in \left[0,\frac{\pi}{2} \right]$ are included into the following Fourier constants
\begin{align*}
	C_{ij}^{(m)} & :=  \int_{0}^{\frac{\pi}{2}}  e_{i}(x) e_{j}(x) e_{m}(x) \tan^2(x) dx, \\
    \overline{C}_{ij}^{(m)} &:= 
     \int_{0}^{\frac{\pi}{2}}  e_{i}(x) \frac{e_{j}^{\prime}(x)}{\omega_{j}} \frac{e_{m}^{\prime}(x)}{\omega_{m}} \tan^2(x) dx, \\
    \overline{A}_{ijl}^{(m)} &:= 
     \int_{0}^{\frac{\pi}{2}}  \frac{ e_{i}^{\prime}(x)}{\omega_{i}} \frac{ e_{j}^{\prime}(x)}{\omega_{j}} e_{l}(x) \frac{ e_{m}^{\prime}(x)}{\omega_{m}}  \frac{\sin^3(x)}{\cos(x)} dx, \\
    \overline{B}_{ijl}^{(m)} &:= 
     \int_{0}^{\frac{\pi}{2}}  e_{i}(x) e_{j}(x) e_{l}(x) \frac{ e_{m}^{\prime}(x)}{\omega_{m}}  \frac{\sin^3(x)}{\cos(x)} dx, \\
     \widetilde{A}_{ijl}^{(m)} &:= \frac{\overline{A}_{ijl}^{(m)}}{\omega_{m}}  - \int_{0}^{\frac{\pi}{2}} \frac{ e_{i}^{\prime}(x)}{\omega_{i}} \frac{e_{j}^{\prime}(x)}{\omega_{j}} e_{l}(x) 
     \left( 
     \int_{x}^{\frac{\pi}{2}} e_{m}(y) \sin(y)\cos(y) dy
     \right) \tan^2(x)dx, \\
     \widetilde{B}_{ijl}^{(m)} &:= \frac{\overline{B}_{ijl}^{(m)}}{\omega_{m}}  - \int_{0}^{\frac{\pi}{2}} e_{i}(x) e_{j}(x) e_{l}(x) 
     \left( 
     \int_{x}^{\frac{\pi}{2}} e_{m}(y) \sin(y)\cos(y) dy
     \right) \tan^2(x)dx. \\
\end{align*}
The asymptotic behaviour of these constants for large values of $i,j,l$ and $m$ is established in \cite{SecondPublication}.
We also find 
\begin{align*}
	f_{1}^{(m)} (\tau) =\omega_{\gamma} \cos(\tau) \delta_{\gamma}^{m}, \quad 
	g_{1}^{(m)} (\tau) = - \omega_{\gamma} \sin(\tau) \delta_{\gamma}^{m}, \quad
	p_{0}^{(m)} (\tau) = q_{0}^{(m)} (\tau) = (1|e_{m}).
\end{align*}
In Lemma 2.4 \cite{SecondPublication}, we compute
	\begin{align*}
(1|e_{m}):=	\int_{0}^{\frac{\pi}{2}} e_{m}(x) \tan^2(x) dx = \frac{2}{\sqrt{\pi}} \frac{(-1)^m}{\omega_{m}} \sqrt{\omega_{m}^2-1},
\end{align*}
for all $m=0,1,2,\dots$. In addition, we differentiate the first equation with respect to $\tau$ and use the second to obtain the harmonic oscillator equation
\begin{align} \label{HarmonicOscilator}
	\ddt{f}_{2\lambda+1}^{(m)} (\tau) + \left(\frac{\omega_{m}}{\omega_{\gamma}} \right)^2 f_{2\lambda+1}^{(m)} (\tau) = S_{2\lambda+1}^{(m)}(\tau)
\end{align}
where the source term is given by
\begin{align*}
	S_{2\lambda+1}^{(m)}(\tau) &:=\frac{ \omega_{m} }{ \omega_{\gamma}} \sum_{\substack{\mu,\nu=0 \\ \mu+\nu=\lambda \\ (\mu,\nu) \neq (\lambda,0) }}^{\lambda}
	\Bigg[
	-\frac{\omega_{\gamma,2\nu}}{\omega_{\gamma}} 
	\left(
	\dt{g}_{2\mu+1}^{(m)}(\tau) +\frac{ \omega_{\gamma} }{ \omega_{m}} \ddt{f}_{2\mu+1}^{(m)}(\tau)
	\right) \\
	& + 
	 \sum_{i,j=0}^{\infty} 
\left(
	C_{ij}^{(m)} 
	\frac{d}{d \tau} \left(p_{2\nu}^{(i)}(\tau) g_{2\mu+1}^{(j)}(\tau) \right)
-  \frac{\omega_{m}}{\omega_{\gamma}}  \overline{C}_{ij}^{(m)} p_{2\nu}^{(i)}(\tau) f_{2\mu+1}^{(j)}(\tau) 
\right) \Bigg].
\end{align*}	
Finally, we make use of the variation constants formula to solve  \eqref{HarmonicOscilator} and find
\begin{align*}
	f_{2\lambda+1}^{(m)} (\tau) &=
	f_{2\lambda+1}^{(m)} (0) \cos \left( \frac{\omega_{m}}{\omega_{\gamma}} \tau \right) + \frac{\omega_{\gamma}}{\omega_{m}} \dt{f}_{2\lambda+1}^{(m)} (0) \sin \left( \frac{\omega_{m}}{\omega_{\gamma}}\tau \right) + \frac{\omega_{\gamma}}{\omega_{m}}  \int_{0}^{\tau} \sin \left( \frac{\omega_{m}}{\omega_{\gamma}} (\tau-s) \right) S_{2\lambda+1}^{(m)}(s) ds.
\end{align*}
In conclusion, we get for all $m=0,1,2,\dots$ the following recurrence relations. For all $\lambda=1,2,3,\dots$,
\begin{align*}
f_{1}^{(m)} (\tau) &=\omega_{\gamma} \cos(\tau) \delta_{\gamma}^{m}, \\
f_{2\lambda+1}^{(m)} (\tau) & = f_{2\lambda+1}^{(m)} (0) \cos \left( \frac{\omega_{m}}{\omega_{\gamma}} \tau \right) + \frac{\omega_{\gamma}}{\omega_{m}} \dt{f}_{2\lambda+1}^{(m)} (0) \sin \left( \frac{\omega_{m}}{\omega_{\gamma}}\tau \right) + \frac{\omega_{\gamma}}{\omega_{m}}  \int_{0}^{\tau} \sin \left( \frac{\omega_{m}}{\omega_{\gamma}} (\tau-s) \right) S_{2\lambda+1}^{(m)}(s) ds, \\ \\
S_{2\lambda+1}^{(m)}(\tau) &=\frac{ \omega_{m} }{ \omega_{\gamma}} \sum_{\substack{\mu,\nu=0 \\ \mu+\nu=\lambda \\ (\mu,\nu) \neq (\lambda,0) }}^{\lambda}
	\Bigg[
	-\frac{\omega_{\gamma,2\nu}}{\omega_{\gamma}} 
	\left(
	\dt{g}_{2\mu+1}^{(m)}(\tau) +\frac{ \omega_{\gamma} }{ \omega_{m}} \ddt{f}_{2\mu+1}^{(m)}(\tau)
	\right) \\
	& + 
	 \sum_{i,j=0}^{\infty} 
\left(
	C_{ij}^{(m)} 
	\frac{d}{d \tau} \left(p_{2\nu}^{(i)}(\tau) g_{2\mu+1}^{(j)}(\tau) \right)
-  \frac{\omega_{m}}{\omega_{\gamma}}  \overline{C}_{ij}^{(m)} p_{2\nu}^{(i)}(\tau) f_{2\mu+1}^{(j)}(\tau) 
\right) \Bigg]
\\ \\
		g_{1}^{(m)} (\tau) & = - \omega_{\gamma} \sin(\tau) \delta_{\gamma}^{m}, \\
		g_{2\lambda+1}^{(m)} (\tau) & =
	\frac{\omega_{\gamma}}{\omega_{m}} \dt{f}_{2\lambda+1}^{(m)} (\tau)
	+\sum_{\substack{\mu,\nu=0 \\ \mu+\nu=\lambda \\ (\mu,\nu) \neq (\lambda,0)}}^{\lambda}
	\Bigg[
	\frac{ \omega_{\gamma,2\nu}}{\omega_{m}} 
	\dt{f}_{2\mu+1}^{(m)}(\tau) - \sum_{i,j=0}^{\infty} C_{ij}^{(m)} p_{2\nu}^{(i)}(\tau) g_{2\mu+1}^{(j)}(\tau)\Bigg], \\ \\
    p_{0}^{(m)}(\tau) & = \frac{2}{\sqrt{\pi}} \frac{(-1)^m}{\omega_{m}} \sqrt{\omega_{m}^2-1},  \\
    p_{2(\lambda+1)}^{(m)}(\tau) &= 	\sum_{\substack{\rho,k,\nu=0 \\ \rho+k+\nu=\lambda \\  }}^{\lambda} \sum_{i,j,l=0}^{\infty}
		\Bigg[
	\widetilde{A}_{ijl}^{(m)}
	f_{2\rho+1}^{(i)}(\tau)f_{2k+1}^{(j)}(\tau)+
	\widetilde{B}_{ijl}^{(m)}
	g_{2\rho+1}^{(i)}(\tau)g_{2k+1}^{(j)}(\tau)
		\Bigg] q_{2\nu}^{(l)}(\tau), \\ \\
    q_{0}^{(m)}(\tau) & = \frac{2}{\sqrt{\pi}} \frac{(-1)^m}{\omega_{m}} \sqrt{\omega_{m}^2-1}, \\
	q_{2(\lambda+1)}^{(m)}(\tau) &= 
	\sum_{\substack{\rho,k,\nu=0 \\ \rho+k+\nu=\lambda   }}^{\lambda} \sum_{i,j,l=0}^{\infty}
	\Bigg[
	\frac{\overline{A}_{ijl}^{(m)}}{\omega_{m}}  f_{2\rho+1}^{(i)}(\tau)f_{2k+1}^{(j)}(\tau)+
	\frac{\overline{B}_{ijl}^{(m)}}{\omega_{m}} g_{2\rho+1}^{(i)}(\tau)g_{2k+1}^{(j)}(\tau)
		\Bigg] q_{2\nu}^{(l)}(\tau).	
\end{align*}
As pointed out in \cite{13033186}, non-periodic terms appear naturally when the source $S_{2\lambda+1}^{(m)}(\tau)$ has terms of the form $\cos ( \frac{\omega_{m}}{\omega_{\gamma}} \tau )$ or $\sin ( \frac{\omega_{m}}{\omega_{\gamma}} \tau )$ in its Fourier expansion. Indeed, we assume that, for some $\lambda=1,2,3,\dots$, 
\begin{align*}
	S_{2\lambda+1}^{(m)}(\tau) = \sum_{a \in I_{\lambda}} S_{1,2\lambda+1,a}^{(m)} \cos(a x)+\sum_{b \in J_{\lambda}} S_{2,2\lambda+1,b}^{(m)} \sin(b x),
\end{align*}
and in addition there exists an index $m =0,1,2,\dots$ such that
\begin{align*}
	\frac{\omega_{m}}{\omega_{\gamma}}:=a \in I_{\lambda}. 
\end{align*}
Then, the integral
\begin{align*}
	\int_{0}^{\tau} \sin \left( \frac{\omega_{m}}{\omega_{\gamma}} (\tau-s) \right) S_{2\lambda+1}^{(m)}(s) ds
\end{align*}
produces a non-periodic term since
\begin{align*}
	\int_{0}^{\tau} \sin \left( \frac{\omega_{m}}{\omega_{\gamma}} (\tau-s) \right) \cos \left( a s \right) ds & = 
	\int_{0}^{\tau}\sin \left( \frac{\omega_{m}}{\omega_{\gamma}} (\tau-s) \right) \cos \left( \frac{\omega_{m}}{\omega_{\gamma}} s \right) ds = \frac{1}{2} \tau \sin \left( \frac{\omega_{m}}{\omega_{\gamma}} \tau \right).
\end{align*}
Such secular terms are also produced when there exists an $m =0,1,2,\dots$ such that
\begin{align*}
	\frac{\omega_{m}}{\omega_{\gamma}}:=b \in J_{\lambda}. 
\end{align*}
In other words,
\begin{align*}
	\forall \lambda =0,1,2,\dots,~ \exists \text{~a set~} \mathcal{N}_{\lambda}:~\forall m \in  \mathcal{N}_{\lambda}, ~f_{2\lambda+1}^{(m)} \text{~contains non-periodic terms}. 
\end{align*}
Maliborski and Rostworowski \cite{13033186} were able to numerically cancel these secular terms by prescribing the initial data $(f_{2\lambda+1}^{(m)}(0),\dt{f}_{2\lambda+1}^{(m)}(0))$. To explain their approach, we take $f_{1}^{(\gamma)}(0)=1$ and $f_{2\lambda+1}^{(\gamma)}(0)=0$ for $\lambda=1,2,3,\dots$. First, they choose $\dt{f}_{2\lambda+1}^{(m)}(0)=0$ for all $\lambda=0,1,2,\dots$ and all $m=0,1,2,\dots$ to ensure that the source term $ S_{2\lambda+1}^{(m)}(\tau)$ is a series only of cosines. Then, they observed that, for all $\lambda=0,1,2,\dots$, $\gamma \in \mathcal{N}_{\lambda}$ and there is only one secular term in $f_{2\lambda+1}^{(\gamma)}(\tau)$ which can be removed by choosing the frequency 
shift $\omega_{\gamma,2(\lambda-1)}$. Furthermore, for all $m \in \mathcal{N}_{\lambda} \setminus \{\gamma\}$, there are some secular terms which cancel by the structure of the equations, some secular terms cancel by choosing some initial data but some initial data remain free variables at this stage. They choose these free variables together with $\omega_{\gamma,2\lambda}$ to cancel the secular terms in the $f_{2(\lambda+1)+1}$. For more details, see \cite{13033186}. However, there is no proof based on rigorous arguments ensuring that this procedure works for all $\lambda$. For example, we fix $\gamma=0$ and choose
\begin{align*}
	\dt{f}_{2\lambda+1}^{(m)}(0) = 0,\quad \forall \lambda \geq 0, \quad \forall m \geq 0.
\end{align*}
First, we use the recurrence relation above and find periodic expressions for $p_{2}^{(m)}(\tau)$ and $q_{2}^{(m)}(\tau)$ due to the periodicity of $f_{1}^{(m)}(\tau)$ and $g_{1}^{(m)}(\tau)$. Second, we compute
\begin{align*}
	S_{3}^{(m)}(\tau)= A_{3}^{(m)} \cos(\tau)+B_{3}^{(m)} \cos(3\tau),
\end{align*}
for some sequences $\{A_{3}^{(m)}\}_{m=0,1,\dots}$ and $\{B_{3}^{(m)}\}_{m=0,1,\dots}$. Then, the equation
\begin{align*}
	\frac{\omega_{m}}{\omega_{0}}=\frac{3+2m}{3}=1+\frac{2}{3} m \in \{1,3 \}
\end{align*}
has two solutions
\begin{align*}
	m \in \mathcal{N}_{3}:= \{0,3\}.	
\end{align*}
Based on the discussion above, we get two secular terms in the list $\{f_{3}^{(m)}(\tau): m=0,1,2,\dots\}$, one for $m=0$ and one for $m=3$. We see that the secular term for $m=0$ can be canceled by choosing the frequency shift $\theta_{2}$ whereas the secular term for $m=3$ cancels by the structure of the equations meaning $B_{3}^{(3)}=0$ without any choice of the initial data $\{f_{3}^{(m)}(0): m=0,1,2,\dots\}$. Hence, all these are free variables at this stage (meaning for $\lambda=3$) and will be chosen later to cancel all the secular terms in $f_{5}^{(m)}(\tau)$. Specifically, we get
\begin{align*}
		& f_{3}^{(0)}(\tau)=
		\left( \frac{765}{128 \pi}+f_{3}^{(0)}(0) \right) \cos(\tau) 
	- \frac{765}{128 \pi} \cos(3\tau) 
	+ \left(\theta_{2} - \frac{153}{4 \pi} \right) \tau \sin(\tau), \\
		& f_{3}^{(1)}(\tau)=
		\frac{6183}{256 \pi}\sqrt{3} \cos(\tau) 
	+ \frac{765}{256 \pi}\sqrt{3} \cos(3\tau) 
	+ \left(f_{3}^{(1)}(0) - \frac{1737}{64 \pi}\sqrt{3} \right) \cos \left(\frac{5}{3}\tau \right), \\
	& f_{3}^{(2)}(\tau)=
		\frac{3717}{3200 \pi}\sqrt{\frac{3}{2}} \cos(\tau) 
	+ \frac{441}{128 \pi}\sqrt{\frac{3}{2}} \cos(3\tau) 
	+ \left(f_{3}^{(2)}(0) - \frac{7371}{1600 \pi}\sqrt{\frac{3}{2}} \right) \cos \left(\frac{7}{3}\tau \right), \\
	& f_{3}^{(3)}(\tau)=
		-\frac{14607}{4480 \pi}\sqrt{\frac{1}{10}} \cos(\tau) 
	+ \frac{14607}{4480 \pi}\sqrt{\frac{1}{10}} \cos(3\tau) 
	+ f_{3}^{(3)}(3)  \cos \left(3\tau \right), \\
	& f_{3}^{(4)}(\tau)=
		\frac{9999}{62720 \pi}\sqrt{\frac{3}{5}} \cos(\tau) 
	+ \frac{99}{256 \pi}\sqrt{\frac{3}{5}} \cos(3\tau) 
	+ \left(f_{3}^{(4)}(0) - \frac{17127}{31360 \pi}\sqrt{\frac{3}{5}} \right) \cos \left(\frac{11}{3}\tau \right), \\
	& f_{3}^{(5)}(\tau)=
		-\frac{507}{11200 \pi}\sqrt{\frac{3}{7}} \cos(\tau) 
	+ \left(f_{3}^{(5)}(0) + \frac{507}{11200 \pi}\sqrt{\frac{3}{7}} \right) \cos \left(\frac{13}{3}\tau \right), \\
	& f_{3}^{(6)}(\tau)=
		\frac{31}{896 \pi}\sqrt{\frac{1}{7}} \cos(\tau) 
	+ \left(f_{3}^{(6)}(0) - \frac{31}{896 \pi}\sqrt{\frac{1}{7}} \right) \cos \left(5\tau \right), \\
	& f_{3}^{(7)}(\tau)=
		-\frac{11271}{1724800 \pi} \cos(\tau) 
	+ \left(f_{3}^{(7)}(0) + \frac{11271}{1724800 \pi}\right) \cos \left(\frac{17}{3}\tau \right), \\
	& f_{3}^{(8)}(\tau)=
		\frac{1083}{135520 \pi}\sqrt{\frac{1}{5}} \cos(\tau) 
	+ \left(f_{3}^{(8)}(0) - \frac{1083}{135520 \pi}\sqrt{\frac{1}{5}} \right) \cos \left(\frac{19}{3}\tau \right), \\
	& f_{3}^{(9)}(\tau)=
		-\frac{1421}{91520 \pi}\sqrt{\frac{1}{55}} \cos(\tau)
	+ \left(f_{3}^{(9)}(0) + \frac{1421}{91520 \pi}\sqrt{\frac{1}{55}}\right) \cos \left(7\tau \right).
\end{align*}
To ensure the periodicity of $f_{3}^{(0)}(\tau)$, we choose $\theta_{2}=\frac{153}{4\pi}$. Once all secular terms in $f_{3}^{(m)}(\tau)$ are removed, the periodic expression for $f_{3}^{(m)}(\tau)$ implies a periodic expression also for $g_{3}^{(m)}(\tau)$. 
\section{Preliminaries}\label{Preliminaries}
We consider the initial value problem which consists of the cubic wave equation on the Einstein cylinder
\begin{align}\label{maineq1}
	 -  \partial_{t}^2 f (t,\psi) +  \partial_{\psi}^2 f (t,\psi) = \frac{f^3(t,\psi)}{\sin^2(\psi)}, \quad (t,\psi) \in \mathbb{R} \times \left(0,\pi \right), 
\end{align}
subject to Dirichlet boundary conditions
\begin{align}\label{maineq2}
	 f(t,0) = f(t,\pi) = 0, \quad t \in \mathbb{R}.
\end{align}
\subsection{The linear problem}
The operator which governs the solutions to the linearized equation is given by 
\begin{align*}
	L[f](\psi):= - \partial_{\psi}^2 f(\psi).
\end{align*} 
We define the domain of the linearized operator
\begin{align*}
	\mathcal{D} \left( L \right):=
	\left \{
	f \in H^2[0,\pi]: f(0)=f(\pi)=0 \text{ in the trace sense}
	\right \}
\end{align*}
and consider the eigenvalue problem $L f = \omega^2 f$ subject to the Dirichlet boundary conditions \eqref{maineq2}. One finds the eigenvalues 
\begin{align*}
\omega_{i}^2:=(i+1)^2, \quad i=0,1,2,\dots
\end{align*}
and the eigenfunctions
\begin{align*}
e_{i}(\psi):=\sin(\omega_{i}\psi), \quad i=0,1,2,\dots.
\end{align*}
Clearly, the set of eigenfunctions $\{e_{i}: i=0,1,2,\dots \}$ forms an orthogonal basis for $L^2 [0,\pi]$ with respect to the inner product
\begin{align*}
	(f|g):=\frac{2}{\pi} \int_{0}^{\pi} f(\psi) g(\psi) d 
	\psi.
\end{align*}
\subsection{Recurrence relation}
For the non-linear problem, we expand $f$ around the zero solution $f_{0}=0$ as series of powers of epsilon,
\begin{align}\label{series}
	f(t,\psi) = \sum_{\lambda = 0}^{\infty} f_{\lambda}(\tau,\psi) \epsilon^{\lambda} = f_{0}(\tau,\psi) + f_{1}(\tau,\psi) \epsilon + f_{2}(\tau,\psi) \epsilon^{2} + \dots.  
\end{align}
and substitute this expression into \eqref{maineq1}. First, we solve at the linear level and obtain
\begin{align}\label{asxeto1}
	-  \partial_{\tau}^2 f_{1} (\tau,\psi) + \partial_{\psi}^2 f_{1} (\tau,\psi)= 0. 
\end{align}
To initiate the algorithm, we prescribe $f_{1}$ and assume that it consists of 1-mode, namely from the set of all eigenfunctions $\{e_{i}: i=0,1,2,\dots\}$ we pick one dominant term $e_{\gamma}$. For simplicity, we choose $\gamma = 0$ and set
\begin{align*}
	f_{1}(\tau,\psi) =  \cos(\tau) e_{0}(\psi) 
\end{align*} 
which clearly solves \eqref{asxeto1} since
\begin{align*}
	-  \partial_{\tau}^2 f_{1} (\tau,\psi) + \partial_{\psi}^2 f_{1} (t,\psi) = \cos(\tau)e_{0}(\psi) +  \cos(\tau) e_{0}^{\prime \prime}(\psi) =\cos(\tau)e_{0}(\psi) -  \cos(\tau) e_{0}(\psi)  = 0.
\end{align*}
Here, we also perturb the eigenvalue $\omega_{0}=1$ associated to the dominant term we chose by setting
\begin{align*}
	\tau = \Omega t,\quad \Omega^2= \sum_{\lambda = 0}^{\infty} \theta_{\lambda} \epsilon^{\lambda} = \theta_{0} + \theta_{1} \epsilon + \theta_{2} \epsilon^{2} + \dots,\quad \theta_{0} = \omega_{0}^2 =1.
\end{align*}
Second, we compute
\begin{align*}
    & \partial_{\psi}^2 f (t,\psi)  = 
    - L[f(\tau,\cdot) ](\psi) 
    = - \sum_{\lambda=0}^{\infty} 
     L[f_{\lambda}(\tau,\cdot) ](\psi) 
    \epsilon^{\lambda},\\
	& \partial_{t}^2 f (t,\psi) = \Omega^2 \partial_{\tau}^2 f (\tau,\psi)= 
	\left(
	\sum_{\lambda=0}^{\infty} \theta_{\lambda} \epsilon^{\lambda}
	\right)
	\left(
	\sum_{\lambda=0}^{\infty} \partial_{\tau}^2 f (\tau,\psi) \epsilon^{\lambda}
	\right) = 
	\sum_{\lambda=0}^{\infty} \left(
	\sum_{\substack{\mu,\nu=0 \\ \mu + \nu = \lambda }}^{\lambda} \theta_{\nu} \partial_{\tau}^2 f_{\mu} (\tau,\psi)
	\right) \epsilon^{\lambda},  \\
	& \frac{f^3 (t,\psi)}{\sin^2(\psi)} = \frac{1}{\sin^2(\psi)}
	\left(
	\sum_{\lambda=0}^{\infty} f_{\lambda}(\tau,\psi)\epsilon^{\lambda}
	\right)^3 =
	 \sum_{\lambda=0}^{\infty} \left(
	\sum_{\substack{\mu,\nu,\rho=0 \\ \mu + \nu + \rho= \lambda }}^{\lambda} \frac{f_{\mu}(\tau,\psi)f_{\nu}(\tau,\psi)f_{\rho}(\tau,\psi)}{\sin^2(\psi)}
	\right) \epsilon^{\lambda}.
\end{align*}
Now, \eqref{maineq1} boils down to the recurrence relation
\begin{align*}
	\sum_{\substack{\mu,\nu=0 \\ \mu + \nu = \lambda }}^{\lambda} \theta_{\nu}
	\partial_{\tau}^2 f_{\mu} (\tau,\psi) +L[f_{\lambda}(\tau,\cdot) ](\psi) = - \sum_{\substack{\mu,\nu,\rho=0 \\ \mu + \nu + \rho= \lambda }}^{\lambda} 
	\frac{f_{\mu}(\tau,\psi)f_{\nu}(\tau,\psi)f_{\rho}(\tau,\psi)}{\sin^2(\psi)} 
\end{align*}
for all $\lambda=0,1,2,\dots$, which can be written as
\begin{align*}
	 \partial_{\tau}^2 f_{\lambda} (\tau,\psi)+  L[f_{\lambda}(\tau,\cdot) ](\psi) = - \sum_{\substack{\mu,\nu,\rho=0 \\ \mu + \nu + \rho= \lambda}}^{\lambda} \frac{f_{\mu}(\tau,\psi)f_{\nu}(\tau,\psi)f_{\rho}(\tau,\psi)}{\sin^2(\psi)} 
	   - \sum_{\substack{\mu,\nu=0 \\ \mu + \nu = \lambda \\ (\mu,\nu) \neq (\lambda, 0)}}^{\lambda} \theta_{\nu} \partial_{\tau}^2 f_{\mu} (\tau,\psi),
\end{align*}
due to the choice $\theta_{0}=1$. Notice that the conditions $f_{0}=0$ and $\mu+\nu+\rho = \lambda$ restrict the first sum of the right-hand side even further to $\mu,\nu,\rho \neq 0,\lambda-1,\lambda$. Indeed, 
\begin{align*}
	& \mu = 0 \Longrightarrow f_{\mu}=0 \Longrightarrow  f_{\mu}f_{\nu}f_{\rho} =0, \\
	& \mu = \lambda-1 \Longrightarrow \nu + \rho = 1 \Longrightarrow (\nu,\rho)=(1,0) \text{ or } (\nu,\rho)=(0,1) 
	\Longrightarrow f_{\mu}f_{\nu}f_{\rho} = 0,\\
	& \mu = \lambda \Longrightarrow \nu + \rho = 0 \Longrightarrow \nu = \rho = 0 \Longrightarrow f_{\mu}f_{\nu}f_{\rho} = 0.
\end{align*}
Similar computations hold in the case where $\nu,\rho=0,\lambda-1,\lambda$ due to the symmetry of the expression $f_{\mu}f_{\nu}f_{\rho}$. Hence, we can rewrite the latter as 
\begin{align}\label{prealgorithm}
	 \partial_{\tau}^2 f_{\lambda} (\tau,\psi) +  L[f_{\lambda}(\tau,\cdot) ](\psi) & = - \sum_{\substack{\mu,\nu,\rho=1 \\ \mu + \nu + \rho= \lambda  }}^{\lambda-2} \frac{f_{\mu}(\tau,\psi)f_{\nu}(\tau,\psi)f_{\rho}(\tau,\psi)}{\sin^2(\psi)}\nonumber \\
	 &\quad - \sum_{\substack{\mu,\nu=0 \\ \mu + \nu = \lambda \\ (\mu,\nu) \neq ( \lambda,0)}}^{\lambda} \theta_{\nu} \partial_{\tau}^2 f_{\mu} (\tau,\psi)
\end{align}
for all $\lambda=2,3,4,\dots$. Third, we expand all $f_{\lambda}(\tau,\psi)$, $\lambda = 1,2,3,\dots$ with respect to the eigenfunctions to the linearized operator
\begin{align}\label{expansionofphilambda}
	f_{\lambda}(\tau,\psi) = \sum_{i=0}^{\infty} f_{\lambda}^{(i)} (\tau) e_{i}(\psi)
\end{align} 
and use the fact that the $e_{i}$'s are eigenfunctions of $L$, i.e.
\begin{align*}
	L[e_{i}](\psi) = -e_{i}^{\prime \prime}(\psi) = -(\sin(\omega_{i}\psi))^{\prime \prime} =\omega_{i}^2 \sin(\omega_{i}\psi)=  \omega_{i}^2 e_{i}(\psi),
\end{align*}
to compute the left-hand side
\begin{align*}
	 \partial_{\tau}^2 f_{\lambda} (\tau,\psi)+ L[f_{\lambda}(\tau,\cdot) ](\psi) =\sum_{i=0}^{\infty}  
	 \left(
	 \ddt{f}_{\lambda}^{(i)} (\tau) +
	 \omega_{i}^2 f_{\lambda}^{(i)} (\tau) \right) e_{i}(\psi), 
\end{align*}
using the notation
\begin{align*}
\dt{f}(\tau) = \frac{d f(\tau)}{d \tau},\quad 
\ddt{f} (\tau) = \frac{d^2 f(\tau)}{d \tau^2},
\end{align*}
whereas the right-hand side is written as
\begin{align*}
	-\sum_{i,j,k,=0}^{\infty} \sum_{\substack{\mu,\nu,\rho=1 \\ \mu + \nu + \rho= \lambda }}^{\lambda-2}f_{\mu}^{(i)}(\tau)f_{\nu}^{(j)}(\tau)f_{\rho}^{(k)}(\tau) \frac{e_{i}(\psi)e_{j}(\psi)e_{k}(\psi)}{\sin^2(\psi)} -\sum_{i=0}^{\infty} \sum_{\substack{\mu,\nu=0 \\ \mu + \nu = \lambda \\ (\mu,\nu) \neq ( \lambda,0)}}^{\lambda} \theta_{\nu} \ddt{f}_{\mu}^{(i)} (\tau) e_{i}(\psi).
\end{align*}
Next, we pick any $m=0,1,2,\dots$ and take the inner product $(\cdot|e_{m})$ in both side to obtain a simple harmonic oscillator
\begin{align*}
	\ddt{f}_{\lambda}^{(m)}(\tau) +
	 \omega_{m}^2 f_{\lambda}^{(m)} (\tau) = S_{\lambda}^{(m)}(\tau)
\end{align*}
for all $\lambda=2,3,4,\dots$ and $m=0,1,2,\dots$, where the source term reads
\begin{align*}
	S_{\lambda}^{(m)}(\tau):=-\sum_{i,j,k,=0}^{\infty}C_{ijk}^{(m)} \sum_{\substack{\mu,\nu,\rho=1 \\ \mu + \nu + \rho= \lambda  }}^{\lambda-2}f_{\mu}^{(i)}(\tau)f_{\nu}^{(j)}(\tau)f_{\rho}^{(k)}(\tau) 
	-\sum_{\substack{\mu,\nu=0 \\ \mu + \nu = \lambda \\ (\mu,\nu) \neq ( \lambda,0)}}^{\lambda} \theta_{\nu} 
	\ddt{f}_{\mu}^{(m)}(\tau)
\end{align*}
and the interaction coefficients are defined as
\begin{align}\label{FourierConstants}
	C_{ijk}^{(m)}:= \left(  \frac{e_{i}e_{j}e_{k}}{\sin^2} \Bigg| e_{m} \right) = 
	 \frac{2}{\pi} \int_{0}^{\pi} \frac{e_{i}(\psi)e_{j}(\psi)e_{k}(\psi)e_{m}(\psi)}{\sin^2(\psi)} d\psi,\quad i,j,k,m=0,1,2,\dots.
\end{align}
Now, the variation of constants formula yields the $f_{\lambda}^{(m)} (\tau)$ in terms of the $f_{\sigma}^{(i)}(\tau)$, for all $\sigma=1,2,3,\dots,\lambda-1$ and $i=0,1,2,\dots$,
\begin{align*}
	f_{\lambda}^{(m)}(\tau) & =f_{\lambda}^{(m)}(0) \cos(\omega_{m}\tau)+ \frac{1}{\omega_{m}}
	\dt{f}_{\lambda}^{(m)}(0) \sin(\omega_{m} \tau) \\
	& - \frac{1}{\omega_{m}}\cos(\omega_{m}\tau) \int_{0}^{\tau} \sin(\omega_{m}s) S_{\lambda}^{(m)}(s) ds
	+  \frac{1}{\omega_{m}}\sin(\omega_{m}\tau) \int_{0}^{\tau} \cos(\omega_{m}s) S_{\lambda}^{(m)}(s) ds,
\end{align*}
for all $\lambda=2,3,4,\dots$ and $m=0,1,2,\cdots$. Next, we simplify this recurrence relation. Specifically, we look at the terms of the source $S_{\lambda}^{(m)}(\tau)$ which contain time derivatives and use integration by parts
\begin{align*}
	\int_{0}^{\tau} \cos(\omega_{m}s) \ddt{f}_{\mu}^{(m)}(s) ds & = -
	\dt{f}_{\mu}^{(m)}(0)
	+\dt{f}_{\mu}^{(m)}(\tau)
	\cos(\omega_{m}\tau)
	+\omega_{m}  f_{\mu}^{(m)}(\tau) \sin(\omega_{m}\tau) \\
	& - \omega_{m}^2 \int_{0}^{\tau} \cos(\omega_{m}s) f_{\mu}^{(m)}(s) ds, \\
	\int_{0}^{\tau} \sin(\omega_{m}s) 
	\ddt{f}_{\mu}^{(m)}(s) ds & = \omega_{m} f_{\mu}^{(m)}(0) 
	+ \dt{f}_{\mu}^{(m)}(\tau) \sin(\omega_{m}\tau)
	-\omega_{m}  f_{\mu}^{(m)}(\tau) \cos(\omega_{m}\tau) \\
	& - \omega_{m}^2 \int_{0}^{\tau} \sin(\omega_{m}s) f_{\mu}^{(m)}(s) ds
\end{align*}
to obtain
\begin{align*}
	f_{\lambda}^{(m)}(\tau) & = 
	\cos(\omega_{m}\tau)
	\sum_{\substack{\mu,\nu=0 \\ \mu + \nu= \lambda }}^{\lambda} \theta_{\nu} f_{\mu}^{(m)}(0) +
	\frac{1}{\omega_{m}}\sin(\omega_{m}\tau)
	\sum_{\substack{\mu,\nu=0 \\ \mu + \nu= \lambda \\ (\mu,\nu) \neq (\lambda,0) }}^{\lambda} \theta_{\nu} 
	\dt{f}_{\mu}^{(m)}(0) \\
	& - \frac{1}{\omega_{m}} \sum_{i,j,k,=0}^{\infty}C_{ijk}^{(m)} \sum_{\substack{\mu,\nu,\rho=1 \\ \mu + \nu + \rho= \lambda  }}^{\lambda-2} \int_{0}^{\tau} \sin(\omega_{m}(\tau-s)) f_{\mu}^{(i)}(s)f_{\nu}^{(j)}(s)f_{\rho}^{(k)}(s) ds  \\
	& + \omega_{m} \sum_{\substack{\mu,\nu=0 \\ \mu + \nu = \lambda \\ (\mu,\nu) \neq ( \lambda,0)}}^{\lambda} \theta_{\nu} \int_{0}^{\tau} \sin(\omega_{m}(\tau-s)) f_{\mu}^{(m)}(s)ds - \sum_{\substack{\mu,\nu=0 \\ \mu + \nu= \lambda \\ (\mu,\nu) \neq (\lambda,0) }  }^{\lambda} \theta_{\nu} f_{\mu}^{(m)}(\tau).
\end{align*}
Finally, we rewrite the latter as
\begin{align}
	f_{\lambda}^{(m)}(\tau)  &= 
	f_{\lambda}^{(m)}(0)\cos(\omega_{m}\tau) + \frac{1}{\omega_{m}}\sin(\omega_{m}\tau) 
	\sum_{\substack{\mu,\nu=0 \\ \mu + \nu= \lambda \\ (\mu,\nu) \neq (\lambda,0) }}^{\lambda} \theta_{\nu} 
	\dt{f}_{\mu}^{(m)}(0)\label{telikoreccurence1}  \\ 
	& + \sum_{\substack{\mu,\nu=0 \\ \mu + \nu= \lambda \\ (\mu,\nu) \neq (\lambda,0)}}^{\lambda} \theta_{\nu} 
	\left(
	f_{\mu}^{(m)}(0)\cos(\omega_{m}\tau) - f_{\mu}^{(m)}(\tau)
	\right)\nonumber 
	+ \int_{0}^{\tau} \sin(\omega_{m}(\tau-s))N_{\lambda}^{(m)}(s) ds\label{telikoreccurence2} \\ 
N_{\lambda}^{(m)}(s)&:=
\omega_{m}
\sum_{\substack{\mu,\nu=0 \\ \mu + \nu= \lambda \\ (\mu,\nu) \neq (\lambda,0) }}^{\lambda} \theta_{\nu} f_{\mu}^{(m)}(s)
-\frac{1}{\omega_{m}}\sum_{i,j,k,=0}^{\infty} C_{ijk}^{(m)} \sum_{\substack{\mu,\nu,\rho=1 \\ \mu + \nu + \rho= \lambda  }}^{\lambda-2} f_{\mu}^{(i)}(s)f_{\nu}^{(j)}(s)f_{\rho}^{(k)}(s). 
\end{align}
\subsection{Secular terms}\label{SecularTerms} Our goal is to show that the recurrence relation \eqref{telikoreccurence1}-\eqref{telikoreccurence2} produces time periodic solutions by prescribing the initial data. However, secular terms, i.e. terms which destroy the periodicity, appear naturally when for some $\lambda=2,3,\dots$ there exists $m=0,1,2,\dots$ such that $N_{\lambda}^{(m)}(s)$ has either a $\cos(\omega_{m}s)$ or a $\sin(\omega_{m}s)$ in its Fourier expansion. In such a case, secular terms of the form $\tau \sin(\omega_{m}\tau)$, $ \tau \cos(\omega_{m}\tau)$ appear, since
\begin{align*}
	& \int_{0}^{\tau} \sin(\omega_{m}(\tau-s)) \cos(\omega_{m}s) ds = \frac{1}{2} \tau \sin(\omega_{m} \tau), \\
	& \int_{0}^{\tau} \sin(\omega_{m}(\tau-s)) \sin(\omega_{m}s) ds = - \frac{1}{2} \tau \cos(\omega_{m} \tau)+\frac{1}{2 \omega_{m}} \sin(\omega_{m} \tau).
\end{align*}
\subsection{Iterations}\label{iterations}
To see what we can expect and reveal the critical role played by the initial data to rigorously cancel the secular terms, we assume that
\begin{align}\label{initialvelocity}
	\dt{f}_{\lambda}^{(m)}(0) = 0, \quad \forall \lambda \geq 0, \quad \forall m \geq 0
\end{align}
and use an alternative formula for the interaction coefficients (Lemma \ref{ClosedFormulaC}, Appendix \ref{formulaC}) to calculate the recurrence relation \eqref{telikoreccurence1}--\eqref{telikoreccurence2} for $\lambda=2,3,4,5,6$. Due to the choice for the initial velocity \eqref{initialvelocity}, we get that $f_{\lambda}^{(m)}(\tau)$ is a sum only of cosines. Hence, based on the discussion above (section \ref{SecularTerms}), for all $\lambda \geq 2$ and $m \geq 0$, the term responsible for a secular term in $f_{\lambda}^{(m)}(\tau)$ is the coefficient of $\cos(\omega_{m}\tau)$ in the source term $S_{\lambda}^{(m)}(\tau)$. We find\\
\begin{align*}
	& f_{0}^{(m)}(\tau) = 0,\quad  \forall m \geq 0 \\ \\
	& f_{1}^{(m)}(\tau)  =  
	\begin{cases}
		m=0:& \cos(\tau)\\
		m \geq 1:& 0 
	\end{cases} \\ \\
	& f_{2}^{(m)}(\tau)  =  
	\begin{cases}
		m=0:& f_{2}^{(0)}(0)\cos(\tau)+\frac{1}{2} \theta_{1} \tau \sin(\tau) \\
		m \geq 1:& f_{2}^{(m)}(0)\cos(\omega_{m}\tau)
	\end{cases}  \\ \\
	& \text{Before passing to } f_{3}^{(m)} (\tau)\text{ we chooce } \theta_{1}=0.\\ \\
	& f_{3}^{(m)}(\tau)  =  
	\begin{cases}
		m=0:& \left( - \frac{1}{32} + f_{3}^{(0)}(0) \right)
		\cos(\tau)+ \frac{1}{32} \cos(3\tau) +\left( -\frac{3}{8}+ \frac{1}{2}\theta_{2} \right) \tau \sin(\tau)
		 \\
		m \geq 1:& f_{3}^{(m)}(0)\cos(\omega_{m}\tau)
	\end{cases} \\ \\
	& \text{Before passing to } f_{4}^{(m)}(\tau) \text{ we chooce } \theta_{2}=\frac{3}{4}.\\ \\
	& f_{4}^{(m)}(\tau)  =  
	\begin{cases}
		m=0:& \left(  \frac{3}{32} f_{2}^{(0)}(0)+f_{4}^{(0)}(0) \right)\cos(\tau) - \frac{3}{32} f_{2}^{(0)}(0) \cos(3\tau) 
		 + \left( \frac{3}{2} f_{2}^{(0)}(0) +  \frac{1}{2} \theta_{3} \right)\tau \sin(\tau) \\
		m =1 :& f_{4}^{(1)}(0) \cos(2\tau)+f_{2}^{(1)}(0) \left(
		\frac{3}{16} -\frac{1}{8}\cos(2\tau)-\frac{1}{16}\cos(4\tau) + \frac{9}{8} \tau \sin(2\tau)
		\right) \\
		m =2 :& f_{4}^{(2)}(0) \cos(3\tau)+f_{2}^{(2)}(0) \left(
		\frac{3}{32}\cos(\tau) -\frac{3}{64}\cos(3\tau)-\frac{3}{64}\cos(5\tau) + \frac{11}{8}  \tau \sin(3\tau)
		\right) \\
		m =3 :& f_{4}^{(3)}(0) \cos(4\tau)+f_{2}^{(3)}(0) \left(
		\frac{1}{16}\cos(2\tau) -\frac{1}{40}\cos(4\tau)-\frac{3}{80}\cos(6\tau) + \frac{27}{16}  \tau \sin(4\tau)
		\right) \\
	~\quad 	\vdots 
	\end{cases}\\ \\
	& \text{Before passing to } f_{5}^{(m)} (\tau) \text{ we chooce } \theta_{3} = -3 f_{2}^{(0)}(0) \text{ and } f_{2}^{(m)}(0)=0 \text{ for all } m \geq 1. 
	\\ \\
	& f_{5}^{(m)}(\tau)  =  
	\begin{cases}
		m=0:&  \left(
		\frac{31}{1024} + \frac{3}{32} (f_{2}^{(0)}(0))^2 + \frac{3}{32}f_{3}^{(0)}(0)+f_{4}^{(0)}(0) +f_{5}^{(0)}(0)
		\right)\cos(\tau) \\
		& + \left(-
		\frac{15}{512} - \frac{3}{32} (f_{2}^{(0)}(0))^2 - \frac{3}{32} f_{3}^{(0)}(0)
		\right)\cos(3\tau)  - \frac{1}{1024}\cos(5\tau) \\ 
		& +\left(
		 - \frac{9}{256} - \frac{3}{8} (f_{2}^{(0)}(0))^2 + \frac{3}{2} f_{3}^{(0)}(0) + \frac{1}{2} \theta_{4} \right)\tau \sin(\tau) \\
		m =1 :& f_{4}^{(1)}(0) \cos(2\tau)+f_{3}^{(1)}(0) \left(
		\frac{3}{16} -\frac{1}{8}\cos(2\tau)-\frac{1}{16}\cos(4\tau) + \frac{9}{8} \tau \sin(2\tau)
		\right) \\
		m =2 :& f_{4}^{(2)}(0) \cos(3\tau)+f_{3}^{(2)}(0) \left(
		\frac{3}{32}\cos(\tau) -\frac{3}{64}\cos(3\tau)-\frac{3}{64}\cos(5\tau) + \frac{11}{8}  \tau \sin(3\tau)
		\right) \\
		m =3 :& f_{4}^{(3)}(0) \cos(4\tau)+f_{3}^{(3)}(0) \left(
		\frac{1}{16}\cos(2\tau) -\frac{1}{40}\cos(4\tau)-\frac{3}{80}\cos(6\tau) + \frac{27}{16}  \tau \sin(4\tau)
		\right) \\
	~\quad 	\vdots 
	\end{cases} \\ \\
	& \text{Before passing to } f_{6}^{(m)}(\tau) \text{ we chooce } \theta_{4} =  \frac{9}{128} + \frac{3}{4} (f_{2}^{(0)}(0))^2 - 3 f_{3}^{(0)}(0)   \text{ and } f_{3}^{(m)}(0)=0 \text{ for all } m \geq 1.\\ \\
	& f_{6}^{(m)}(\tau)  =  
	\begin{cases}
		m=0:&  \left(
		-\frac{3}{1024} - \frac{99}{512} f_{2}^{(0)}(0) + \frac{1}{32} (f_{2}^{(0)}(0))^3 +\frac{3}{16}f_{2}^{(0)}(0)f_{3}^{(0)}(0) + \frac{35}{32}f_{4}^{(0)}(0) + f_{5}^{(0)}(0)
		\right)\cos(\tau) \\
		& + \left(
		\frac{99}{512}f_{2}^{(0)}(0) - \frac{1}{32} (f_{2}^{(0)}(0))^3 - \frac{3}{16}f_{2}^{(0)}(0) f_{3}^{(0)}(0) -\frac{3}{32}f_{4}^{(0)}(0)
		\right)\cos(3\tau)  + \frac{3}{1024}\cos(5\tau) \\ 
		& +\left(
		  \frac{3}{16}f_{2}^{(0)}(0) + \frac{3}{4} (f_{2}^{(0)}(0))^3 - \frac{3}{4}f_{2}^{(0)}(0) f_{3}^{(0)}(0) + \frac{3}{2}f_{4}^{(0)}(0) + \frac{1}{2} \theta_{5} \right)\tau \sin(\tau) \\
		m =1 :& f_{5}^{(1)}(0) \cos(2\tau)+f_{4}^{(1)}(0) \left(
		\frac{3}{16} - \frac{1}{8}\cos(2\tau)-\frac{1}{16}\cos(4\tau) + \frac{9}{8} \tau \sin(2\tau)
		\right) \\
		m =2 :& f_{5}^{(2)}(0) \cos(3\tau)+f_{4}^{(2)}(0) \left(
		\frac{3}{32}\cos(\tau) -\frac{3}{64}\cos(3\tau)-\frac{3}{64}\cos(5\tau) + \frac{11}{8}  \tau \sin(3\tau)
		\right) \\
		m =3 :& f_{5}^{(3)}(0) \cos(4\tau)+f_{4}^{(3)}(0) \left(
		\frac{1}{16}\cos(2\tau) -\frac{1}{40}\cos(4\tau)-\frac{3}{80}\cos(6\tau) + \frac{27}{16}  \tau \sin(4\tau)
		\right) \\
	~\quad 	\vdots 
	\end{cases} \\ \\
	& \text{Before passing to } f_{7}^{(m)}(\tau) \text{ we chooce } \theta_{5} = - \frac{3}{8}f_{2}^{(0)}(0) - \frac{3}{2} (f_{2}^{(0)}(0))^3 + \frac{3}{2} f_{2}^{(0)}(0) f_{3}^{(0)}(0) - 3f_{4}^{(0)}(0)  \text{ and } \\
	& f_{4}^{(m)}(0)=0 \text{ for all } m \geq 1.
\end{align*}
We observe that 
\begin{itemize}
 \item For all $\lambda \geq 2$ and $m \geq 0$, before passing to $f_{\lambda+1}^{(m)}(\tau)$, we tune $f_{\lambda}^{(m)}(\tau)$ by choosing the frequency shift $\theta_{\lambda-1}$ as well as the initial data $f_{\lambda-2}^{(m)}(0)$ only for $m \geq 1$ whereas all the $f_{\lambda-2}^{(0)}(0)$ are free variables.
\item For all $\lambda \geq 2$ and $m=0$, there is only one secular term in $f_{\lambda}^{(0)}(\tau)$ which can be removed by prescribing the frequency shift $\theta_{\lambda-1}$. Once this choice is made, $\theta_{\lambda-1}$ depends only on the free initial data $f_{\sigma}^{(0)}(0)$ for $\sigma = 1,2,3,\dots,\lambda-2$. Furthermore, this secular term 
	\begin{align*}
		p_{0}(\psi):=\tau \sin(\tau)
	\end{align*}
	 is the same for all $\lambda \geq 2$ when $m=0$.
	\item For all $\lambda \geq 4$ and $m \geq 1$, there is only one secular term in $f_{\lambda}^{(m)}(\tau)$ which can be removed by prescribing the initial data $f_{\lambda-2}^{(m)}(0)=0$. Furthermore, this secular term 
	\begin{align*}
		p_{m}(\psi):=
		\begin{cases}
			m=1:& \frac{3}{16} - \frac{1}{8}\cos(2\tau)-\frac{1}{16}\cos(4\tau) +\frac{9}{8} \tau \sin(\tau), \\
			m=2:& \frac{3}{32}\cos(\tau)-\frac{3}{64}\cos(3\tau) -\frac{3}{64}\cos(5\tau)+\frac{11}{8} \tau \sin(3\tau)\\ 
			m=3:& \frac{1}{16}\cos(2\tau)-\frac{1}{40}\cos(4\tau) -\frac{3}{80}\cos(6\tau)+\frac{27}{16} \tau \sin(4\tau)\\ 
			~\quad \vdots &
		\end{cases}
	\end{align*}
	 is the same for all $\lambda \geq 4$ when $m \geq 1$.
\item The choice $f_{\lambda}^{(m)}(0)=0$ for all $\lambda=2,3,4,\dots$ and all $m=1,2,3,\dots$ implies $f_{\lambda}^{(m)}(\tau)=0$ for all $\lambda=2,3,4,\dots$, all $m=1,2,3,\dots$ and all $\tau$. 
\end{itemize}
\section{Main results and proofs}\label{Statement}
\subsection{Main results}
In the following, $[x] \in \mathbb{N}\cup\{0\}$ stands for the integer part of $x\in \mathbb{R}$, i.e. the largest integer which is smaller or equal to $x$. We prove the following two results.
%
%
%
%
%
%
\begin{theorem}\label{maintheorem}
	Assume that $\dt{f}_{p}^{(l)}(0) = 0$ for all $p=0,1,2,\dots$ and $l=0,1,2,\dots$. Then, for all $\lambda \geq 2$, there exist constants $c_{\lambda}^{\alpha} $ for $\alpha=0,1,2,\dots,\left[\frac{\lambda-1}{2} \right]$ depending only on $f_{\sigma}^{(0)}(0)$ for $\sigma=1,2,3,\dots,\lambda-2$ such that the following is true. For $m=0$, we have
\begin{align*}
     f_{\lambda}^{(0)}(\tau)=
            f_{\lambda}^{(0)}(0) \cos(\tau) + \sum_{\alpha=0}^{\left[\frac{\lambda-1}{2} \right]} c_{\lambda}^{\alpha} \cos((1+2\alpha)\tau),
\end{align*}	
by choosing $\theta_{0},\theta_{1},\dots,\theta_{\lambda-1}$. For $m \geq 1$, we have
\begin{align*}
     f_{\lambda}^{(m)}(\tau)=
 f_{\lambda}^{(m)}(0) \cos(\omega_{m}\tau), 
\end{align*}
by choosing $f_{\lambda-2}^{(m)}(0) = 0$.
\end{theorem}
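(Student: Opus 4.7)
The plan is strong induction on $\lambda\geq 2$, with the base cases $\lambda\leq 6$ already handled explicitly in Section~\ref{iterations}. At step $\lambda$, the induction hypothesis, combined with the successively enforced choices $f_{\sigma-2}^{(m)}(0)=0$ at each earlier step, yields three structural facts: (i) $f_\sigma^{(m)}(\tau)\equiv 0$ for all $1\leq \sigma\leq \lambda-2$ and $m\geq 1$; (ii) $f_{\lambda-1}^{(m)}(\tau)=f_{\lambda-1}^{(m)}(0)\cos(\omega_m\tau)$ for $m\geq 1$; and (iii) for $\sigma\leq \lambda-1$, $f_\sigma^{(0)}(\tau)$ is a finite sum of cosines in odd multiples of $\tau$ with maximal frequency at most $2[(\sigma-1)/2]+1$. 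Starting from the closed form \eqref{telikoreccurence1}--\eqref{telikoreccurence2}, the proof reduces to analyzing the Fourier content of $N_\lambda^{(m)}(s)$ so that the variation-of-constants integral reproduces the claimed form.

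I treat the case $m\geq 1$ first. In the cubic contribution to $N_\lambda^{(m)}$, the summation is restricted to $1\leq \mu,\nu,\rho\leq \lambda-2$, so by (i) every factor $f_\mu^{(i)}(s)$ with $i\geq 1$ vanishes and only $i=j=k=0$ survives. A direct computation using $e_0(\psi)=\sin(\psi)$ reduces the interaction coefficient to $C_{000}^{(m)}=(e_0|e_m)=\delta_{0,m}$, so the cubic sum vanishes identically for $m\geq 1$. The linear part $\omega_m\sum_{\mu+\nu=\lambda,\,\nu\geq 1}\theta_\nu f_\mu^{(m)}(s)$ collapses to the single monomial $\omega_m\theta_2 f_{\lambda-2}^{(m)}(0)\cos(\omega_m s)$, once we use $\theta_1=0$ (fixed at step $\lambda=3$) and $f_\mu^{(m)}\equiv 0$ for $\mu\leq \lambda-3$. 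Prescribing $f_{\lambda-2}^{(m)}(0)=0$ annihilates this unique resonant term, so $N_\lambda^{(m)}\equiv 0$, and the correction sum $\sum\theta_\nu\bigl(f_\mu^{(m)}(0)\cos(\omega_m\tau)-f_\mu^{(m)}(\tau)\bigr)$ also vanishes term by term by (i)--(ii). Hence \eqref{telikoreccurence1} collapses to $f_\lambda^{(m)}(\tau)=f_\lambda^{(m)}(0)\cos(\omega_m\tau)$, as required.

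For $m=0$ the same reduction (now with $C_{000}^{(0)}=1$) leaves the cubic source as $-\sum_{\mu+\nu+\rho=\lambda}f_\mu^{(0)}(s)f_\nu^{(0)}(s)f_\rho^{(0)}(s)$. By (iii) each factor is a sum of cosines in odd multiples of $s$, so the triple product expands into cosines at frequencies of the form $\pm\omega_a\pm\omega_b\pm\omega_c$ with $\omega_a,\omega_b,\omega_c$ odd, which are themselves odd integers and bounded above by $\mu+\nu+\rho=\lambda$. The linear part inherits the same parity with maximal frequency at most $\lambda-1$. Consequently $N_\lambda^{(0)}(s)$ is a finite sum of $\cos((2\alpha+1)s)$ with $2\alpha+1\leq 2[(\lambda-1)/2]+1$, and the only resonant frequency is $\omega_0=1$. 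Since $f_1^{(0)}(s)=\cos(s)$, the coefficient of $\cos(s)$ in $N_\lambda^{(0)}$ is an affine function of $\theta_{\lambda-1}$ with slope exactly $1$ (because the pair $(\mu,\nu)=(1,\lambda-1)$ is the unique one producing $\theta_{\lambda-1}$), so a unique choice of $\theta_{\lambda-1}$ cancels the secular source. The variation-of-constants integral then returns only cosines at odd frequencies $\leq 2[(\lambda-1)/2]+1$; together with the correction sum (of the same type by (iii)) and the leading term $f_\lambda^{(0)}(0)\cos(\tau)$, this yields the asserted expansion.

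The main obstacle is the bookkeeping: one must verify that for every $m\geq 1$ no resonant contribution at $\omega_m$ is overlooked beyond the single $\omega_m\theta_2 f_{\lambda-2}^{(m)}(0)$ term, and that the affine slope $\theta_{\lambda-1}\mapsto[\text{coefficient of }\cos(s)\text{ in }N_\lambda^{(0)}]$ is indeed equal to $1$ so that the frequency-shift is uniquely determined. A careful but routine tracking of dependencies through the recurrence then confirms that $c_\lambda^\alpha$ and $\theta_{\lambda-1}$ depend only on the free data $f_\sigma^{(0)}(0)$ accumulated at the lower levels, closing the induction.
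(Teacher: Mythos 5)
Your overall strategy (strong induction, expanding the source $N_\lambda^{(m)}$ in cosines, isolating the resonant frequency, and using $\theta_{\lambda-1}$ for $m=0$ and $f_{\lambda-2}^{(m)}(0)$ for $m\geq 1$) is the same as the paper's, and your treatment of the $m=0$ case is essentially correct. However, there is a genuine gap in the $m\geq 1$ case, caused by your structural fact (i). The induction hypothesis only gives $f_{\xi-2}^{(m)}(0)=0$ for $\xi=2,\dots,\lambda-1$, i.e.\ $f_\sigma^{(m)}\equiv 0$ for $m\geq 1$ and $\sigma\leq\lambda-3$; it says nothing about $\sigma=\lambda-2$, since $f_{\lambda-2}^{(m)}(0)$ is precisely the free parameter being determined at step $\lambda$. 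Consequently your claim that in the cubic part of $N_\lambda^{(m)}$ ``only $i=j=k=0$ survives'' is false: the triples with $(\mu,\nu,\rho)=(\lambda-2,1,1)$ and permutations contribute $f_{\lambda-2}^{(i)}(s)\,f_1^{(0)}(s)^2$ with $i\geq 1$, weighted by $C_{i00}^{(m)}=\delta_i^m$, which equals $1$ when $i=m$. This yields the additional term
\begin{align*}
 - f_{\lambda-2}^{(m)}(0)\int_0^\tau \sin(\omega_m(\tau-s))\cos(\omega_m s)\cos^2(s)\,ds ,
\end{align*}
which contains the secular contribution $-\tfrac14 f_{\lambda-2}^{(m)}(0)\,\tau\sin(\omega_m\tau)$ as well as non-resonant cosines at frequencies $\omega_m\pm 2$.

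Because of this missing term, your identification of the resonant coefficient as $\omega_m\theta_2 f_{\lambda-2}^{(m)}(0)/2$ alone is incomplete: the true coefficient of $\tau\sin(\omega_m\tau)$ is $\bigl(\tfrac{3}{8}\omega_m-\tfrac14\bigr)f_{\lambda-2}^{(m)}(0)=\tfrac{1+3m}{8}f_{\lambda-2}^{(m)}(0)$, and one must check that this prefactor is nonzero (as the paper does) to conclude that $f_{\lambda-2}^{(m)}(0)=0$ is \emph{forced}; were the prefactor to vanish for some $m$, the secular term would cancel automatically but the leftover cosines at $\omega_m\pm 2$ would still destroy the claimed single-mode form $f_\lambda^{(m)}(\tau)=f_\lambda^{(m)}(0)\cos(\omega_m\tau)$. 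Note also that your blanket assertion ``every factor with $i\geq 1$ vanishes'' happens to be harmless for $m=0$ only because there the corresponding coefficient $C_{i00}^{(0)}=\delta_i^0$ kills the term, not because the factor itself vanishes. To close the induction you need the full case analysis over which of $i,j,k$ vanish, as in the paper's Step 3B, together with the non-degeneracy computation $\tfrac{1+3m}{8}\neq 0$.
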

	Interestingly, our analysis reveals that there exists a periodic solution to \eqref{maineq11}-\eqref{maineq22} if and only if $f(t,\cdot)$ is proportional to $e_{0}$. Indeed, for all $\lambda=2,3,4,\dots$, the choice of the initial data $f_{\lambda-2}^{(m)}(0) = 0$ for all $m=1,2,3,\dots$, implies $f_{\lambda-2}^{(m)}(\tau)=0$ for all $m=1,2,3,\dots$ and all $\tau$,  whereas $f_{\lambda-2}^{(0)}(0)$ are all free variables. Hence,
	\begin{align*}
		f_{\lambda}(\tau,\psi) & = \sum_{i=0}^{\infty} f_{\lambda}^{(i)} (\tau) e_{i}(\psi) = f_{\lambda}^{(0)} (\tau) e_{0}(\psi)
	\end{align*}
	and $f$ turns out to be proportional to $e_{0}$, 
	\begin{align*}	
		f(t,\psi) & = \sum_{\lambda=0}^{\infty} f_{\lambda} (\tau,\psi) \epsilon^{\lambda} = \left( \sum_{\lambda=0}^{\infty} f_{\lambda}^{(0)} (\tau) \epsilon^{\lambda}
		\right)e_{0}(\psi).
	\end{align*}
	Now, we set
\begin{align*}
	F(\tau):=\sum_{\lambda=0}^{\infty} f_{\lambda}^{(0)} (\tau) \epsilon^{\lambda},\quad \tau = \Omega t, \quad \Omega^2 := \sum_{\lambda=0}^{\infty} \theta_{\lambda} \epsilon^{\lambda}
\end{align*}	
	and plug $f(t,\psi)= F(\tau)e_{0}(\psi)$ into \eqref{maineq11} to obtain 
\begin{align*}
	\Omega^2 \frac{d^2 F(\tau)}{ d\tau^2 } + F(\tau)+F^3(\tau) = 0.
\end{align*}
For $G(t)=F(\Omega t)=F(\tau)$, the latter reads
\begin{align}\label{ODE}
	\frac{d^2 G(t)}{ dt ^2 } + G(t)+ G^3(t) = 0.
\end{align}
Observe that the condition $\dt{f}_{p}^{(l)}(0) = 0$ for all $p=0,1,2,\dots$ and $l=0,1,2,\dots$ implies $G^{\prime}(0)=0$ where here and in the following we write $' = d/dt$.
\begin{theorem} \label{maintheorem2}
	All initial data $(G(0),G^{\prime}(0))$ such that $G(0) \neq 0$ and $G^{\prime}(0)=0$ lead to time periodic solutions to the second order ordinary differential equation \eqref{ODE}.
\end{theorem}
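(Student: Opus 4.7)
The plan is to exploit the Hamiltonian structure of \eqref{ODE}. Multiplying the equation by $G'(t)$ and integrating, the energy
\begin{align*}
E(t) := \tfrac{1}{2}\bigl(G'(t)\bigr)^{2} + V\bigl(G(t)\bigr), \qquad V(x) := \tfrac{1}{2}x^{2} + \tfrac{1}{4}x^{4},
\end{align*}
is conserved along any solution. For the initial data in the statement this gives $E(t) \equiv E_{0} := V(G(0)) > 0$. Since $V$ is coercive on $\mathbb{R}$, the constant $E_{0}$ uniformly bounds both $|G(t)|$ and $|G'(t)|$ on any interval of existence, so the standard continuation criterion yields global existence of $G$ on all of $\mathbb{R}$.

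The next step is a phase-plane analysis. The potential $V$ is smooth, even, strictly convex, with minimum $V(0) = 0$ and $V(x) \to \infty$ as $|x| \to \infty$; hence for each $E_{0} > 0$ the equation $V(x) = E_{0}$ admits exactly two real solutions $\pm b$, with $b = |G(0)|$ in our case. The level set $\{(x,y) : \tfrac{1}{2} y^{2} + V(x) = E_{0}\}$ is a smooth simple closed curve in the $(G,G')$-plane, symmetric about both axes, since $y^{2} = 2(E_{0} - V(x)) \ge 0$ with equality only at $x = \pm b$. Moreover, this level set contains no equilibrium of the system because the only equilibrium $(0,0)$ lies on $\{E = 0\}$.

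The initial point $(G(0), 0) = (\pm b, 0)$ lies on this level curve. At that point $G'' = -V'(\pm b) \neq 0$, so the trajectory immediately leaves the turning point; being constrained to the compact level curve by conservation of energy and uniqueness, it must traverse the entire curve and return to its starting point. Using separation of variables, the time to go from $(b,0)$ to $(-b, 0)$ along one arc is
\begin{align*}
\frac{T}{2} = \int_{-b}^{b} \frac{dx}{\sqrt{2\bigl(E_{0} - V(x)\bigr)}}.
\end{align*}
Near the turning points one has $E_{0} - V(x) = V'(\pm b)(\pm b - x) + \mathcal{O}\bigl((b \mp x)^{2}\bigr)$ with $V'(\pm b) \neq 0$, so the integrand has only an integrable square-root singularity, while away from $\pm b$ the denominator is bounded below. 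Therefore $T$ is finite and $G$ is $T$-periodic.

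The whole argument is the textbook analysis of a one-degree-of-freedom conservative system with a strictly confining polynomial potential, so no step is technically deep. The only points requiring care are \emph{(i)} verifying that the solution does not stall at the turning point, which follows from $G''(0) = -V'(G(0)) \neq 0$, and \emph{(ii)} excluding the pathological case $G(0) = 0$ (which gives the trivial equilibrium solution), which is precisely why the hypothesis $G(0) \neq 0$ is imposed in the statement.
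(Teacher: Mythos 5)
Your proposal is correct and follows essentially the same route as the paper: both derive the conserved energy by multiplying \eqref{ODE} by $G'$, pass to the phase plane, observe that the nonzero level sets of the energy are compact closed curves avoiding the unique equilibrium $(0,0)$, and conclude periodicity. The only difference is in how the final step is certified --- the paper exhibits an explicit smooth parametrization of the level curve and invokes the fact that a compact connected level set free of equilibria is a periodic orbit, while you verify finiteness of the period integral $\int_{-b}^{b} dx/\sqrt{2(E_0-V(x))}$ directly; this is a minor variation, and your check of the integrable square-root singularity at the turning points is a welcome extra detail.
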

\subsection{Proofs} We begin with the proof of Theorem \ref{maintheorem}.
\begin{proof}[Proof of Theorem \ref{maintheorem}]
	The proof is based on induction in $\lambda$. 
\begin{flushleft}
	\underline{\text{Step 1}: Proof for $\lambda=2,3$.}
\end{flushleft}	
Recall that, for $\lambda=1$, we have
\begin{align*}
	 \partial_{\tau}^2 f_{1} (\tau,\psi)
	 +  L[f_{1}(\tau,\cdot) ](\psi) = 0
\end{align*} 
and we chose
\begin{align*}
	f_{1}(\tau,\psi) = \cos(\tau) e_{0}(\psi)
\end{align*}
to initiate the algorithm which in turn implies
\begin{align*}
	f_{1}^{(m)} (\tau) = \delta_{0}^{m} \cos(\tau),
\end{align*}
for all $m \geq 0$. For small values of $\lambda$, such as $\lambda=2$ and $\lambda=3$, it is more natural to use \eqref{prealgorithm} before passing to \eqref{telikoreccurence1}-\eqref{telikoreccurence2}. For $\lambda=2$, we have
\begin{align*}
	\partial_{\tau}^2 f_{2} (\tau,\psi) +  L[f_{2}(\tau,\cdot) ](\psi) & = - \theta_{1} \partial_{\tau}^2 f_{1} (\tau,\psi) \\
	 & =  \theta_{1} \cos(\tau) e_{0}(\psi) 
\end{align*}
and using the expansion \eqref{expansionofphilambda} we infer
\begin{align*}
	\ddt{f}_{2}^{(m)}(\tau) +
	 \omega_{m}^2 f_{2}^{(m)} (\tau) = \theta_{1}\delta_{0}^{m} \cos(\tau),
\end{align*}
for all $m \geq 0$, which in turn implies
\begin{align*}
	f_{2}^{(m)}(\tau)  =  
	\begin{cases}
		m=0:& f_{2}^{(0)}(0)\cos(\tau)+\frac{1}{2} \theta_{1} \tau \sin(\tau) \\
		m \geq 1:& f_{2}^{(m)}(0)\cos(\omega_{m}\tau), 
	\end{cases}
\end{align*}
where the assumption of the initial velocity of $f_{2}^{(m)}$ was used. To get a periodic solution we are forced to choose $\theta_{1}=0$. Similarly, for $\lambda=3$, we have
\begin{align*}
	\partial_{\tau}^2 f_{3} (\tau,\psi) +  L[f_{3}(\tau,\cdot) ](\psi) & = \frac{f_{1}^3(\tau,\psi)}{\sin^2(\psi)}  - \theta_{1} \partial_{\tau}^2 f_{2} (\tau,\psi)  - \theta_{2} \partial_{\tau}^2 f_{1} (\tau,\psi) \\
	 & = - \cos^3(\tau) e_{0}(\psi) + \theta_{2} \cos(\tau) e_{0}(\psi) \\
	 & = \Big[ \left(\theta_{2} -\frac{3}{4} \right) \cos(\tau) - \frac{1}{4} \cos(3\tau) \Big] e_{0}(\psi)
\end{align*}
since $\theta_{1}=0$ and $f_{1}(\tau,\psi)=\cos(\tau) e_{0}(\psi)$. As before, we make use of the expansion \eqref{expansionofphilambda} to obtain 
\begin{align*}
	\ddt{f}_{3}^{(m)}(\tau) +
	 \omega_{m}^2 f_{3}^{(m)} (\tau) = \Big[ \left(\theta_{2} -\frac{3}{4} \right) \cos(\tau) - \frac{1}{4} \cos(3\tau) \Big] \delta_{0}^{m},
	 \end{align*}
for all $m \geq 0$, which in turn implies
\begin{align*}
	f_{3}^{(m)}(\tau)  =  
	\begin{cases}
		m=0:& f_{3}^{(0)}(0)\cos(\tau) - \frac{1}{32} 
		\cos(\tau)+ \frac{1}{32} \cos(3\tau) +\frac{1}{2}\left( \theta_{2}-\frac{3}{4} \right) \tau \sin(\tau) \\
		m \geq 1:& f_{3}^{(m)}(0)\cos(\omega_{m}\tau),
	\end{cases}
\end{align*}
where again the assumption of the initial velocity of $f_{3}^{(m)}$ was used. To get a periodic solution we are forced to choose $\theta_{2}=\frac{3}{4}$. 
\begin{flushleft}
	\underline{\text{Step 2}: Induction hypothesis.}
\end{flushleft}	
	We assume that there exists some $\lambda \geq 4$ such that the statement holds true for all $\xi=2,3,,\dots,\lambda-1$. In other words, we assume that for all $\xi=2,3,\dots,\lambda-1$, we have already prescribed $\theta_{0},\theta_{1},\dots,\theta_{\xi-1}$ so that
	\begin{align*}
		f_{\xi}^{(0)}(\tau)=
            f_{\xi}^{(0)}(0) \cos(\tau) + \sum_{\alpha=0}^{\left[\frac{\xi-1}{2} \right]} c_{\xi}^{\alpha} \cos((1+2\alpha)\tau),
	\end{align*}
	and have already set $f_{\xi-2}^{(m)}(0) = 0,~ m\geq 1$ so that
	\begin{align*}
	f_{\xi}^{(m)}(\tau)= 	f_{\xi}^{(m)}(0) \cos(\omega_{m}\tau), \quad  m\geq 1.
	\end{align*}	
\begin{flushleft}
	\underline{\text{Step 3}: Proof for $\xi=\lambda$.}
\end{flushleft}	
\begin{flushleft}
	\underline{\text{Step 3A}: Proof for $\xi=\lambda$ and $m=0$.}
\end{flushleft}
Based on the recurrence relation \eqref{telikoreccurence1}-\eqref{telikoreccurence2}, $f_{\lambda}^{(0)}(\tau)$ is given in terms of the $f_{\xi}^{(i)}(\tau)$ with $\xi=0,1,2,\dots,\lambda-1$ and $i=0,1,2,\dots$ by
\begin{align*}
	f_{\lambda}^{(0)}(\tau)  &= 
	f_{\lambda}^{(0)}(0)\cos(\tau) + \sum_{\substack{\mu,\nu=0 \\ \mu + \nu= \lambda \\ (\mu,\nu) \neq (\lambda,0)}}^{\lambda} \theta_{\nu} 
	\left(
	f_{\mu}^{(0)}(0)\cos(\tau) - f_{\mu}^{(0)}(\tau)
	\right)
	+ \int_{0}^{\tau} \sin(\tau-s)N_{\lambda}^{(0)}(s) ds, \\ 
N_{\lambda}^{(0)}(s)&:=
\sum_{\substack{\mu,\nu=0 \\ \mu + \nu= \lambda \\ (\mu,\nu) \neq (\lambda,0) }}^{\lambda} \theta_{\nu} f_{\mu}^{(0)}(s)
-\sum_{i,j,k,=0}^{\infty} C_{ijk}^{(0)} \sum_{\substack{\mu,\nu,\rho=1 \\ \mu + \nu + \rho= \lambda  }}^{\lambda-2} f_{\mu}^{(i)}(s)f_{\nu}^{(j)}(s)f_{\rho}^{(k)}(s). 
\end{align*}
First, we compute
\begin{align*}
	\sum_{\substack{\mu,\nu=0 \\ \mu + \nu= \lambda \\ (\mu,\nu) \neq (\lambda,0)}}^{\lambda} \theta_{\nu} 
	\left(
	f_{\mu}^{(0)}(0)\cos(\tau) - f_{\mu}^{(0)}(\tau)
	\right).
\end{align*}
Notice that this quantity depends only on $f_{\xi}^{(0)}(\tau)$, $\xi =0,1,2,\dots, \lambda -1$ and so 
\begin{align*}
	& \sum_{\substack{\mu,\nu=0 \\ \mu + \nu= \lambda \\ (\mu,\nu) \neq (\lambda,0)}}^{\lambda} \theta_{\nu} 
	\left(
	f_{\mu}^{(0)}(0)\cos(\tau) - f_{\mu}^{(0)}(\tau)
	\right)  = \\  
	& \sum_{\substack{\mu,\nu=0 \\ \mu + \nu= \lambda \\ (\mu,\nu) \neq (\lambda,0)}}^{\lambda} \theta_{\nu} 
	\left(
	f_{\mu}^{(0)}(0)\cos(\tau) -  f_{\mu}^{(0)}(0) \cos(\tau) - \sum_{\alpha=0}^{\left[\frac{\mu-1}{2} \right]} c_{\mu}^{\alpha} \cos((1+2\alpha)\tau) \right) = \\
	&- \sum_{\substack{\mu,\nu=0 \\ \mu + \nu= \lambda \\ (\mu,\nu) \neq (\lambda,0)}}^{\lambda} \theta_{\nu} 
	 \sum_{\alpha=0}^{\left[\frac{\mu-1}{2} \right]} c_{\mu}^{\alpha} \cos((1+2\alpha)\tau) =
	 \sum_{\alpha=0}^{\left[\frac{\lambda-1}{2} \right]} \Bigg[ -
	  \sum_{  \substack{  \mu,\nu=0 \\ \mu + \nu= \lambda \\ (\mu,\nu) \neq (\lambda,0)\\ 0 \leq \alpha \leq \left[\frac{\mu-1}{2}  \right] }  }^{\lambda} \theta_{\nu}  c_{\mu}^{\alpha} 
	  \Bigg] \cos((1+2\alpha)\tau).
\end{align*}
Second, we compute the first part of of the integral of $\sin(\tau-s)N_{\lambda}^{(0)}(s)$ namely
\begin{align*}
	\sum_{\substack{\mu,\nu=0 \\ \mu + \nu= \lambda \\ (\mu,\nu) \neq (\lambda,0) }}^{\lambda} \theta_{\nu} \int_{0}^{\tau} \sin(\tau-s) f_{\mu}^{(0)}(s) ds.
\end{align*}
For simplicity, we define
\begin{align*}
	d_{\mu}^{\alpha}:=
	\begin{cases}
		\alpha=0:& f_{\mu}^{(0)}(0)+ c_{\mu}^{0} \\
		\alpha \geq 1:& c_{\mu}^{\alpha}
	\end{cases}
\end{align*}
and $f_{\mu}^{(0)}(\tau)$ can be written in a more compact form as
\begin{align*}
		f_{\mu}^{(0)}(\tau)=
            f_{\mu}^{(0)}(0) \cos(\tau) + \sum_{\alpha=0}^{\left[\frac{\mu-1}{2} \right]} c_{\mu}^{\alpha} \cos((1+2\alpha)\tau) = \sum_{\alpha=0}^{\left[\frac{\mu-1}{2} \right]}  d_{\mu}^{\alpha} \cos((1+2\alpha)\tau).
\end{align*}
As before, notice that this quantity depends only on $f_{\xi}^{(0)}(\tau)$, $\xi =0,1,2,\dots, \lambda -1$ and so
\begin{align*}
	& \sum_{\substack{\mu,\nu=0 \\ \mu + \nu= \lambda \\ (\mu,\nu) \neq (\lambda,0) }}^{\lambda} \theta_{\nu} \int_{0}^{\tau} \sin(\tau-s) f_{\mu}^{(0)}(s) ds =\\
	 &\sum_{\substack{\mu,\nu=0 \\ \mu + \nu= \lambda \\ (\mu,\nu) \neq (\lambda,0) }}^{\lambda} \theta_{\nu} 
	\Bigg[
	d_{\mu}^{0}  \int_{0}^{\tau}  \sin(\tau-s)  \cos(s) ds + \sum_{\alpha=1}^{\left[\frac{\mu-1}{2} \right]} d_{\mu}^{\alpha} \int_{0}^{\tau}  \sin(\tau-s) \cos((1+2\alpha)s) ds \Bigg]=\\
	 &\sum_{\substack{\mu,\nu=0 \\ \mu + \nu= \lambda \\ (\mu,\nu) \neq (\lambda,0) }}^{\lambda} \theta_{\nu} 
	\Bigg[
	  \frac{d_{\mu}^{0}}{2} \tau \sin(\tau) + \sum_{\alpha=1}^{\left[\frac{\mu-1}{2} \right]} d_{\mu}^{\alpha} \left( \frac{\cos(\tau)}{4 \alpha (1+\alpha)} - \frac{\cos((1+2\alpha)\tau)}{4 \alpha (1+\alpha)} \right) \Bigg]= \\
	 & \Bigg[ \sum_{\substack{\mu,\nu=0 \\ \mu + \nu= \lambda \\ (\mu,\nu) \neq (\lambda,0) }}^{\lambda} \frac{1}{2}\theta_{\nu} 
	 d_{\mu}^{0}   \Bigg] \tau \sin(\tau)
	 + \Bigg[ \sum_{\substack{\mu,\nu=0 \\ \mu + \nu= \lambda \\ (\mu,\nu) \neq (\lambda,0) }}^{\lambda} \theta_{\nu} \sum_{\alpha=1}^{\left[\frac{\mu-1}{2} \right]} \frac{d_{\mu}^{\alpha} }{4 \alpha (1+\alpha)} \Bigg] \cos(\tau) \\
	 &- \sum_{\substack{\mu,\nu=0 \\ \mu + \nu= \lambda \\ (\mu,\nu) \neq (\lambda,0) }}^{\lambda} \theta_{\nu} \sum_{\alpha=1}^{\left[\frac{\mu-1}{2} \right]} \frac{d_{\mu}^{\alpha}}{4 \alpha (1+\alpha)} \cos((1+2\alpha)\tau)
\end{align*}
which is of the form
\begin{align*}
	\Bigg[ \sum_{\substack{\mu,\nu=0 \\ \mu + \nu= \lambda \\ (\mu,\nu) \neq (\lambda,0) }}^{\lambda} \frac{1 }{2}\theta_{\nu}
	 d_{\mu}^{0}   \Bigg] \tau \sin(\tau) + \mathcal{M}_{\lambda} \cos(\tau) + \sum_{\beta=0}^{\left[\frac{\lambda-1}{2} \right]} \mathcal{N}_{\lambda}^{\beta} \cos{((1+2\beta)\tau}),
\end{align*}
where $\mathcal{M}_{\lambda}$ and $\mathcal{N}_{\lambda}^{\beta}$ depend only on the $c_{\mu}^{\alpha}$. Third, we focus on the second part of the integral of $\sin(\tau-s)N_{\lambda}^{(0)}(s)$ namely on
\begin{align}\label{symmquant}
	  -\sum_{i,j,k,=0}^{\infty} C_{ijk}^{(0)} \sum_{\substack{\mu,\nu,\rho=1 \\ \mu + \nu + \rho= \lambda  }}^{\lambda-2} \int_{0}^{\tau} \sin(\tau-s)f_{\mu}^{(i)}(s)f_{\nu}^{(j)}(s)f_{\rho}^{(k)}(s) ds.
\end{align}
Now, this quantity depends only on $f_{\xi}^{(l)}(\tau)$, $\xi =1,2,\dots, \lambda -2$ and $l =0,1,2,\dots$. Since $l=0$ is also included, we split the latter into the following cases.
\begin{flushleft}
	\underline{Case 1: $i,j,k \neq 0$}
\end{flushleft}
Then, 
\begin{align*}
	& -\sum_{\substack{  i,j,k,=0 \\ i,j,k \neq 0 }}^{\infty} C_{ijk}^{(0)} \sum_{\substack{\mu,\nu,\rho=1 \\ \mu + \nu + \rho= \lambda  }}^{\lambda-2} \int_{0}^{\tau} \sin(\tau-s)f_{\mu}^{(i)}(s)f_{\nu}^{(j)}(s)f_{\rho}^{(k)}(s) ds = \\
	& -\sum_{\substack{  i,j,k=1 }}^{\infty} C_{ijk}^{(0)} \sum_{\substack{\mu,\nu,\rho=1 \\ \mu + \nu + \rho= \lambda  }}^{\lambda-2}f_{\mu}^{(i)}(0)f_{\nu}^{(j)}(0)f_{\rho}^{(k)}(0) \int_{0}^{\tau} \sin(\tau-s)\cos(\omega_{i}s)\cos(\omega_{j}s)\cos(\omega_{k}s) ds.
\end{align*}
However, we assumed that $f_{\xi-2}^{(m)}(0) = 0$ for all $m =1,2,3,\dots$ and $\xi=2,3,\dots,\lambda-1$ which is equivalent to $f_{\xi}^{(m)}(0) = 0$ for all $m =1,2,3,\dots$ and $\xi=0,1,\dots,\lambda-3$. So, the only case where this sum is not zero due to the choice of the initial data is when $\mu = \nu = \rho = \lambda -2$. Then, $\lambda = \mu + \nu + \rho = 3( \lambda -2)$ which holds only when $\lambda=3$ and we initially assumed that $\lambda \geq 4$. In conclusion, this sum equals to zero.
\begin{flushleft}
	\underline{Case 2: ($i=0$ and $j,k \neq 0$) or ($j=0$ and $i,k \neq 0$) or ($k=0$ and $i,j \neq 0$)}
\end{flushleft}
Notice that the quantity inside the sum in \eqref{symmquant} is symmetric with respect to $i,j,k$. For example, 
\begin{align*}
	& C_{ijk}^{(0)} \sum_{\substack{\mu,\nu,\rho=1 \\ \mu + \nu + \rho= \lambda  }}^{\lambda-2} \int_{0}^{\tau} \sin(\tau-s)f_{\mu}^{(i)}(s)f_{\nu}^{(j)}(s)f_{\rho}^{(k)}(s) ds = \\
	&C_{jik}^{(0)} \sum_{\substack{\mu,\nu,\rho=1 \\ \mu + \nu + \rho= \lambda  }}^{\lambda-2} \int_{0}^{\tau} \sin(\tau-s)f_{\mu}^{(j)}(s)f_{\nu}^{(i)}(s)f_{\rho}^{(k)}(s) ds = \\
	&C_{jik}^{(0)} \sum_{\substack{\mu,\nu,\rho=1 \\ \mu + \nu + \rho= \lambda  }}^{\lambda-2} \int_{0}^{\tau} \sin(\tau-s)f_{\nu}^{(i)}(s) f_{\mu}^{(j)}(s)f_{\rho}^{(k)}(s) ds = \\
	&C_{jik}^{(0)} \sum_{\substack{\nu,\mu,\rho=1 \\  \nu +\mu+ \rho= \lambda  }}^{\lambda-2} \int_{0}^{\tau} \sin(\tau-s)f_{\mu}^{(i)}(s) f_{\nu}^{(j)}(s)f_{\rho}^{(k)}(s) ds.
\end{align*}
Due to this symmetry, we only consider one of these cases. In particular, we assume that $i=0$ and $j,k \neq 0$. Then, we compute 
\begin{align*}
C_{0jk}^{(0)}=\frac{2}{\pi} \int_{0}^{\pi} \frac{\sin^2(\omega_{0}\psi)\sin(\omega_{j}\psi)\sin(\omega_{k}\psi)}{\sin^2(\psi)}  d \psi =\frac{2}{\pi} \int_{0}^{\pi} \sin(\omega_{j}\psi)\sin(\omega_{k}\psi) d \psi =  \delta_{j}^{k}	
\end{align*}
and
\begin{align*}
	& -\sum_{\substack{  j,k,=0 \\ j,k \neq 0 }}^{\infty} C_{0jk}^{(0)} \sum_{\substack{\mu,\nu,\rho=1 \\ \mu + \nu + \rho= \lambda  }}^{\lambda-2} \int_{0}^{\tau} \sin(\tau-s)f_{\mu}^{(0)}(s)f_{\nu}^{(j)}(s)f_{\rho}^{(k)}(s) ds = \\
	& -\sum_{\substack{  k=1 }}^{\infty}  \sum_{\substack{\mu,\nu,\rho=1 \\ \mu + \nu + \rho= \lambda  }}^{\lambda-2} \int_{0}^{\tau} \sin(\tau-s)f_{\mu}^{(0)}(s)f_{\nu}^{(k)}(s)f_{\rho}^{(k)}(s) ds = \\
	& -\sum_{\substack{  k=1 }}^{\infty}  \sum_{\substack{\mu,\nu,\rho=1 \\ \mu + \nu + \rho= \lambda  }}^{\lambda-2} \int_{0}^{\tau} \sin(\tau-s) \Bigg[
	f_{\mu}^{(0)}(0) \cos(s) + \sum_{\alpha=0}^{\left[\frac{\mu-1}{2} \right]} c_{\mu}^{\alpha} \cos((1+2\alpha)s) \Bigg] 
	f_{\nu}^{(k)}(0)f_{\rho}^{(k)}(0)\cos^2(\omega_{k}s) 
	ds.
\end{align*}
Recall that $f_{\mu}^{(0)}(0)$ are all free variables. As before, we assumed that $f_{\xi-2}^{(m)}(0) = 0$ for all $m =1,2,3,\dots$ and $\xi=2,3,\dots,\lambda-1$ which is equivalent to $f_{\xi}^{(m)}(0) = 0$ for all $m =1,2,3,\dots$ and $\xi=0,1,\dots,\lambda-3$. So, the only case where this sum is not zero due to the choice of the initial data is when $\nu = \rho = \lambda -2$. Then, $\lambda = \mu + \nu + \rho = \mu + 2(\lambda -2)$  which implies $\mu+\lambda=4$ and we initially assumed that $\lambda \geq 4$. Hence, the only possible case is $\mu=0$ only for $\lambda =4$ which contradicts the condition $\mu \geq 1$. In conclusion, the sum equals to zero.
\begin{flushleft}
	\underline{Case 3: ($i \neq 0$ and $j,k =0$) or ($j \neq 0$ and $i,k = 0$) or ($k \neq 0$ and $i,j = 0$)}
\end{flushleft}
Again, due to the symmetry, we only consider one of these cases. In particular, we assume that $i \neq 0$ and $j,k = 0$. Then, we have 
\begin{align*}
	& -\sum_{\substack{  i,j,k=0 \\ i\neq 0, j,k=0}}^{\infty} C_{ijk}^{(0)} \sum_{\substack{\mu,\nu,\rho=1 \\ \mu + \nu + \rho= \lambda  }}^{\lambda-2} \int_{0}^{\tau} \sin(\tau-s)f_{\mu}^{(i)}(s)f_{\nu}^{(j)}(s)f_{\rho}^{(k)}(s) ds = \\
	& -\sum_{\substack{  i=1 }}^{\infty} C_{i00}^{(0)} \sum_{\substack{\mu,\nu,\rho=1 \\ \mu + \nu + \rho= \lambda  }}^{\lambda-2} \int_{0}^{\tau} \sin(\tau-s)f_{\mu}^{(i)}(s)f_{\nu}^{(0)}(s)f_{\rho}^{(0)}(s) ds
\end{align*}
and since
\begin{align*}
C_{i00}^{(0)}=\frac{2}{\pi} \int_{0}^{\pi} \frac{\sin^3(\omega_{0}\psi)\sin(\omega_{i}\psi)}{\sin^2(\psi)}  d \psi =\frac{2}{\pi} \int_{0}^{\pi} \sin(\psi)\sin(\omega_{i}\psi) d \psi =  \delta_{i}^{0}	
\end{align*}
we conclude that the sum equals to zero since $i \geq 1$.
\begin{flushleft}
	\underline{Case 4: $i,j,k =0$}
\end{flushleft}
This is the first non-trivial case. We have $C_{000}^{(0)} = 1$ and we will compute
\begin{align*}
	 -  \sum_{\substack{\mu,\nu,\rho=1 \\ \mu + \nu + \rho= \lambda  }}^{\lambda-2} \int_{0}^{\tau} \sin(\tau-s)f_{\mu}^{(0)}(s)f_{\nu}^{(0)}(s)f_{\rho}^{(0)}(s) ds. 
\end{align*}
As before, we define
\begin{align*}
	d_{\mu}^{\alpha}:=
	\begin{cases}
		\alpha=0:& f_{\mu}^{(0)}(0)+ c_{\mu}^{0} \\
		\alpha \geq 1:& c_{\mu}^{\alpha}
	\end{cases}
\end{align*}
and $f_{\mu}^{(0)}(\tau)$ can be written in a more compact form as
\begin{align*}
		f_{\mu}^{(0)}(\tau)=
            f_{\mu}^{(0)}(0) \cos(\tau) + \sum_{\alpha=0}^{\left[\frac{\mu-1}{2} \right]} c_{\mu}^{\alpha} \cos((1+2\alpha)\tau) = \sum_{\alpha=0}^{\left[\frac{\mu-1}{2} \right]}  d_{\mu}^{\alpha} \cos((1+2\alpha)\tau).
\end{align*}
Similarly for $f_{\nu}^{(0)}(\tau)$ and $f_{\rho}^{(0)}(\tau)$. Then,
\begin{align*}
	& -  \sum_{\substack{\mu,\nu,\rho=1 \\ \mu + \nu + \rho= \lambda  }}^{\lambda-2} \int_{0}^{\tau} \sin(\tau-s)f_{\mu}^{(0)}(s)f_{\nu}^{(0)}(s)f_{\rho}^{(0)}(s) ds = \\
	& -  \sum_{\substack{\mu,\nu,\rho=1 \\ \mu + \nu + \rho= \lambda  }}^{\lambda-2} 
	\sum_{\substack{ 0 \leq \alpha_{1} \leq \left[\frac{\mu-1}{2} \right] \\ 
	0 \leq \alpha_{2} \leq \left[\frac{\nu-1}{2} \right] \\
	0 \leq \alpha_{3} \leq \left[\frac{\rho-1}{2} \right]
	} }
	d_{\mu}^{\alpha_{1}}
	d_{\nu}^{\alpha_{2}} d_{\rho}^{\alpha_{3}}
	\int_{0}^{\tau} \sin(\tau-s) \cos((1+2\alpha_{1})s)\cos((1+2\alpha_{2})s)\cos((1+2\alpha_{3})s) ds.
\end{align*}
For any triple $(\alpha_{1},\alpha_{2},\alpha_{3})$ satisfying the conditions above and for any $s \in [0,\tau]$, we split the product into a sum of sinuses as follows
\begin{align*}
	&  8\sin(\tau-s) \cos((1+2\alpha_{1})s)\cos((1+2\alpha_{2})s)\cos((1+2\alpha_{3})s) = \\
	& \sin(\tau+2s(1+\alpha_{1}+\alpha_{2}+\alpha_{3}))+
	\sin(\tau+2s(-\alpha_{1}+\alpha_{2}+\alpha_{3}))+
	\sin(\tau+2s(\alpha_{1}-\alpha_{2}+\alpha_{3}))+\\
	& \sin(\tau+2s(-1-\alpha_{1}-\alpha_{2}+\alpha_{3}))+
	\sin(\tau+2s(\alpha_{1}+\alpha_{2}-\alpha_{3}))+
	\sin(\tau+2s(-1-\alpha_{1}+\alpha_{2}-\alpha_{3}))+\\
	&\sin(\tau+2s(-1+\alpha_{1}-\alpha_{2}-\alpha_{3}))+
	\sin(\tau+2s(-2-\alpha_{1}-\alpha_{2}-\alpha_{3}))
\end{align*}
and observe that all the terms are of the form $\sin(\tau+2s A )$ for various values of $A$ and specifically for $A \in \mathcal{A}(\alpha)$ where
\begin{align*}
    & \alpha=(\alpha_{1},\alpha_{2},\alpha_{3}) \in \mathcal{Q}_{\mu \nu \rho}, \\
	& \mathcal{A}(\alpha):= \big\{ 1+\alpha_{1}+\alpha_{2}+\alpha_{3},  -\alpha_{1}+\alpha_{2}+\alpha_{3},
	\alpha_{1}-\alpha_{2}+\alpha_{3},-1-\alpha_{1}-\alpha_{2}+\alpha_{3}, \\
	&\quad\quad\quad \alpha_{1}+\alpha_{2}-\alpha_{3},-1-\alpha_{1}+\alpha_{2}-\alpha_{3},-1+\alpha_{1}-\alpha_{2}-\alpha_{3},
-2-\alpha_{1}-\alpha_{2}-\alpha_{3}	 \big\}, \\
	& \mathcal{Q}_{\mu \nu \rho}:=\left\{0,1,2,\dots,\left[\frac{\mu-1}{2} \right]  \right\} \times
	\left\{0,1,2,\dots,\left[\frac{\nu-1}{2} \right]  \right\} \times
	\left\{0,1,2,\dots,\left[\frac{\rho-1}{2} \right]  \right\}.
\end{align*}
Using the fact that
\begin{align*}
	\int_{0}^{\tau} \sin(\tau+2s A ) ds =
	\begin{cases}
		A=0:& \tau \sin(\tau),\\
		A \neq 0:& \frac{1}{2A} \left(\cos(\tau)- \cos((1+2A)\tau) \right)
	\end{cases}
\end{align*}
we get
\begin{align*}
	&- \sum_{\substack{\mu,\nu,\rho=1 \\ \mu + \nu + \rho= \lambda  }}^{\lambda-2} 
	\sum_{\alpha \in \mathcal{Q}_{\mu \nu \rho} }
	d_{\mu}^{\alpha_{1}}
	d_{\nu}^{\alpha_{2}} d_{\rho}^{\alpha_{3}}
	\int_{0}^{\tau} \sin(\tau-s) \cos((1+2\alpha_{1})s)\cos((1+2\alpha_{2})s)\cos((1+2\alpha_{3})s) ds = \\
	&- \frac{1}{8} \sum_{\substack{\mu,\nu,\rho=1 \\ \mu + \nu + \rho= \lambda  }}^{\lambda-2} 
	\sum_{\alpha \in \mathcal{Q}_{\mu \nu \rho} }
	d_{\mu}^{\alpha_{1}}
	d_{\nu}^{\alpha_{2}} d_{\rho}^{\alpha_{3}}
	\int_{0}^{\tau} \sum_{A \in \mathcal{A}(\alpha)} \sin(\tau+2s A) ds = \\
	& - \frac{1}{8} \sum_{\substack{\mu,\nu,\rho=1 \\ \mu + \nu + \rho= \lambda  }}^{\lambda-2} 
	\sum_{\alpha \in \mathcal{Q}_{\mu \nu \rho} }
	d_{\mu}^{\alpha_{1}}
	d_{\nu}^{\alpha_{2}} d_{\rho}^{\alpha_{3}}
	\Bigg[ \sum_{ \substack{ A \in \mathcal{A}(\alpha) \\ A \neq 0 }} \frac{1}{2A} \left(\cos(\tau)- \cos((1+2A)\tau) \right) +\sum_{ \substack{ A \in \mathcal{A}(\alpha) \\ A = 0 }}  \tau \sin(\tau) \Bigg],
\end{align*}
which is written in the form
\begin{align*}
	\Theta_{\lambda} \tau \sin(\tau) + \mathcal{R}_{\lambda} \cos(\tau) + \sum_{\beta=0}^{\left[\frac{\lambda-1}{2} \right]} \mathcal{K}_{\lambda}^{\beta} \cos{((1+2\beta)\tau})
\end{align*}
for 
\begin{align}\label{thetadef}
	& \Theta_{\lambda}:= 
	- \frac{1}{8} \sum_{\substack{\mu,\nu,\rho=1 \\ \mu + \nu + \rho= \lambda  }}^{\lambda-2} 
	\sum_{\alpha \in \mathcal{Q}_{\mu \nu \rho} }
	d_{\mu}^{\alpha_{1}}
	d_{\nu}^{\alpha_{2}} 
	d_{\rho}^{\alpha_{3}}
	\sum_{ \substack{ A \in \mathcal{A}(\alpha) \\ A = 0 }} 1,
	\\
	& \mathcal{R}_{\lambda}:= 
	 - \frac{1}{16} \sum_{\substack{\mu,\nu,\rho=1 \\ \mu + \nu + \rho= \lambda  }}^{\lambda-2} 
	\sum_{\alpha \in \mathcal{Q}_{\mu \nu \rho} }
	d_{\mu}^{\alpha_{1}}
	d_{\nu}^{\alpha_{2}} 
	d_{\rho}^{\alpha_{3}}
	\sum_{ \substack{ A \in \mathcal{A}(\alpha) \\ A \neq 0 }} \frac{1}{A} \nonumber
\end{align}
and for some constant $\mathcal{K}_{\lambda}^{\beta}$ provided that, for all $(\mu,\nu\,\rho)$ with $\mu,\nu,\rho=1,2,\dots,\lambda-2$ and $\mu+\nu+\rho = \lambda$ as well as for all $\alpha \in \mathcal{Q}_{\mu \nu \rho}$ and $A \in \mathcal{A}(\alpha)$, the range of the absolute value of $1+2A$ is a subset of the range of $1+2\beta$ for $\beta=0,1,2,\dots,\left[\frac{\lambda-1}{2}\right]$. Since there are only eight elements in $\mathcal{A}(\alpha)$ we prove this statement for each one of them. However, we illustrate the proof only for one of the worst case scenarios, namely for $A=-2-\alpha_{1}-\alpha_{2}-\alpha_{3}$. In the case where $\mu,\nu$ are even and $\rho$ is odd, we get $\lambda$ odd since $\mu+\nu+\rho = \lambda$. Hence,
\begin{align*}
	\left[\frac{\mu-1}{2} \right] = \frac{\mu}{2}-1,\quad 
	\left[\frac{\nu-1}{2} \right] = \frac{\nu}{2}-1,\quad
	\left[\frac{\rho-1}{2} \right] = \frac{\rho-1}{2}, \quad
	\left[\frac{\lambda-1}{2} \right] = \frac{\lambda-1}{2}.
\end{align*}
Then,
\begin{align*}
	|1+2A| &= |1-4-2\alpha_{1}-2\alpha_{2}-2\alpha_{3}| \\ 
	& \leq 3+2\left( \alpha_{1}+\alpha_{2}+\alpha_{3} \right) \\
	& \leq 3 + 2 \left( 
	\left[\frac{\mu-1}{2} \right]+
	\left[\frac{\nu-1}{2} \right]+
	\left[\frac{\rho-1}{2} \right]
	\right) \\
	& \leq 3 + 2 \left( 
	\frac{\mu}{2}-1+
	\frac{\nu}{2}-1+
	\frac{\lambda-1}{2}
	\right) \\
	& =3+  \mu+\nu+\rho - 5 
	 = \lambda-2 \\
	& = 1+ 2 \frac{\lambda-3}{2} 
	 \leq  1+ 2 \frac{\lambda-1}{2} 
	 =  1+ 2 \left[\frac{\lambda-1}{2} \right].
\end{align*}
Similarly, one can prove the validity of the statement above for all the other cases.
\begin{flushleft}
	\underline{\text{Summary of Step 3A.}}
\end{flushleft}
In conlcusion, we get
	\begin{align*}
	f_{\lambda}^{(0)}(\tau)&= f_{\lambda}^{(0)}(0) \cos(\tau) + \sum_{\beta=0}^{\left[\frac{\lambda-1}{2} \right]} \mathcal{A}_{\lambda}^{\beta} \cos{((1+2\beta)\tau}) +\Bigg[\Theta_{\lambda} +  \sum_{\substack{\mu,\nu=0 \\ \mu + \nu= \lambda \\ (\mu,\nu) \neq (\lambda,0) }}^{\lambda} \frac{\theta_{\nu} }{2}
	 \left(f_{\mu}^{(0)}(0)+c_{\mu}^{0} \right) \Bigg] \tau \sin(\tau) 
\end{align*}
for some constant $\mathcal{A}_{\lambda}^{\gamma}$ depending only on the $c_{\mu}^{\alpha}$ where $\Theta_{\lambda}$ is defined in \eqref{thetadef}. In this expression, all the $\theta_{0},\theta_{1},\dots,\theta_{\lambda-2}$ are already fixed due to the induction hypothesis whereas the $\theta_{\lambda-1}$ is still a free variable to be chosen. To this end, we use the fact that $f_{1}^{(0)}(0)=1$ and $c_{1}^{0}=0$ from Step 1 to simply write
\begin{align*}
	\Theta_{\lambda} + \sum_{\substack{\mu,\nu=0 \\ \mu + \nu= \lambda \\ (\mu,\nu) \neq (\lambda,0) }}^{\lambda} \frac{\theta_{\nu} }{2}
	 \left(f_{\mu}^{(0)}(0)+c_{\mu}^{0} \right) 
\end{align*}
as
\begin{align*}	 
\Theta_{\lambda} +	 \frac{\theta_{\lambda-1} }{2}
	 + \sum_{\substack{\mu,\nu=0 \\ \mu + \nu= \lambda \\ (\mu,\nu) \neq (\lambda,0) \\ (\mu,\nu) \neq  (1,\lambda-1)  }}^{\lambda} \frac{\theta_{\nu} }{2}
	 \left(f_{\mu}^{(0)}(0)+c_{\mu}^{0} \right)   
\end{align*}
and hence we choose 
\begin{align*}
	\frac{\theta_{\lambda-1}}{2} = 
	- \Bigg[ \Theta_{\lambda} + \sum_{\substack{\mu,\nu=0 \\ \mu + \nu= \lambda \\ (\mu,\nu) \neq (\lambda,0) \\ (\mu,\nu)  \neq (1,\lambda-1)  }}^{\lambda} \frac{\theta_{\nu} }{2}
	 \left(f_{\mu}^{(0)}(0)+c_{\mu}^{0} \right)\Bigg]
\end{align*}
so that
\begin{align*}
	f_{\lambda}^{(0)}(\tau) &=f_{\lambda}^{(0)}(0) \cos(\tau) + \sum_{\beta=0}^{\left[\frac{\lambda-1}{2} \right]} \mathcal{A}_{\lambda}^{\beta} \cos{((1+2\beta)\tau}).
\end{align*}
\begin{flushleft}
	\underline{\text{Step 3B}: Proof for $\xi=\lambda$ and $m \geq 1$.}
\end{flushleft}
From the recurrence relation \eqref{telikoreccurence1}-\eqref{telikoreccurence2}, each $f_{\lambda}^{(m)}(\tau)$ with $m \geq 1$ is given in terms of the $f_{\xi}^{(i)}(\tau)$ with $\xi=0,1,2,\dots,\lambda-1$ and $i=0,1,2,\dots$ by
\begin{align*}
	f_{\lambda}^{(m)}(\tau)  &= 
	f_{\lambda}^{(m)}(0)\cos(\tau) + \sum_{\substack{\mu,\nu=0 \\ \mu + \nu= \lambda \\ (\mu,\nu) \neq (\lambda,0)}}^{\lambda} \theta_{\nu} 
	\left(
	f_{\mu}^{(m)}(0)\cos(\tau) - f_{\mu}^{(m)}(\tau)
	\right)
	+ \int_{0}^{\tau} \sin(\omega_{m}(\tau-s)) N_{\lambda}^{(m)}(s) ds, \\ 
N_{\lambda}^{(m)}(s)&:=
\omega_{m}\sum_{\substack{\mu,\nu=0 \\ \mu + \nu= \lambda \\ (\mu,\nu) \neq (\lambda,0) }}^{\lambda} \theta_{\nu} f_{\mu}^{(m)}(s)
-\frac{1}{\omega_{m}}\sum_{i,j,k,=0}^{\infty} C_{ijk}^{(m)} \sum_{\substack{\mu,\nu,\rho=1 \\ \mu + \nu + \rho= \lambda  }}^{\lambda-2} f_{\mu}^{(i)}(s)f_{\nu}^{(j)}(s)f_{\rho}^{(k)}(s). 
\end{align*}
First, we observe that
\begin{align*}
	\sum_{\substack{\mu,\nu=0 \\ \mu + \nu= \lambda \\ (\mu,\nu) \neq (\lambda,0)}}^{\lambda} \theta_{\nu} 
	\left(
	f_{\mu}^{(m)}(0)\cos(\tau) - f_{\mu}^{(m)}(\tau) 
	\right)=0
\end{align*}
since now $m \geq 1$ and so $f_{\mu}^{(m)}(\tau)$ is simply given by $f_{\mu}^{(m)}(\tau) = f_{\mu}^{(m)}(0)\cos(\tau)$ for all such $\mu$. Second, we compute the first part of of the integral of $\sin(\omega_{m}(\tau-s))N_{\lambda}^{(m)}(s)$ namely
\begin{align*}
	\omega_{m}\sum_{\substack{\mu,\nu=0 \\ \mu + \nu= \lambda \\ (\mu,\nu) \neq (\lambda,0) }}^{\lambda} \theta_{\nu} \int_{0}^{\tau} \sin(\omega_{m}(\tau-s)) f_{\mu}^{(m)}(s) ds.
\end{align*}
As before, notice that this quantity depends only on $f_{\xi}^{(m)}(\tau)$, $\xi =0,1,2,\dots, \lambda -1$ and so
\begin{align*}
	&\omega_{m} \sum_{\substack{\mu,\nu=0 \\ \mu + \nu= \lambda \\ (\mu,\nu) \neq (\lambda,0) }}^{\lambda} \theta_{\nu} \int_{0}^{\tau} \sin(\omega_{m}(\tau-s))f_{\mu}^{(m)}(s) ds =\\
	&\omega_{m} \sum_{\substack{\mu,\nu=0 \\ \mu + \nu= \lambda \\ (\mu,\nu) \neq (\lambda,0) }}^{\lambda} \theta_{\nu} f_{\mu}^{(m)}(0) \int_{0}^{\tau} \sin(\omega_{m}(\tau-s))\cos(\omega_{m}s) ds =\\
	&\omega_{m}\Bigg[ \sum_{\substack{\mu,\nu=0 \\ \mu + \nu= \lambda \\ (\mu,\nu) \neq (\lambda,0) }}^{\lambda} \theta_{\nu} f_{\mu}^{(m)}(0) \Bigg] \frac{1}{2} \tau \sin(\omega_{m}\tau).
\end{align*}
However, we assumed that $f_{\xi-2}^{(m)}(0) = 0$ for all $m =1,2,3,\dots$ and $\xi=2,3,\dots,\lambda-1$ which is equivalent to $f_{\xi}^{(m)}(0) = 0$ for all $m =1,2,3,\dots$ and $\xi=0,1,\dots,\lambda-3$. So, the only case where the sum is not zero due to the choice of the initial data is when $(\mu,\nu)\in \{(\lambda-2,2),(\lambda-1,1) \}$ and the choice $(\mu,\nu) = (\lambda-1,1)$ does not contribute to the sum because $\theta_{1}=0$. Hence, the sum above boils down to only one secular term
\begin{align*}
\omega_{m}   \frac{\theta_{2}}{2} f_{\lambda-2}^{(m)}(0)  \tau \sin(\omega_{m}\tau).
\end{align*} 
Third, we focus on the second part of the integral of $\sin(\omega_{m}(\tau-s))N_{\lambda}^{(m)}(s)$, namely on
\begin{align}\label{symmquant2}
	  -\sum_{i,j,k,=0}^{\infty} C_{ijk}^{(m)} \sum_{\substack{\mu,\nu,\rho=1 \\ \mu + \nu + \rho= \lambda  }}^{\lambda-2} \int_{0}^{\tau} \sin(\omega_{m}(\tau-s))f_{\mu}^{(i)}(s)f_{\nu}^{(j)}(s)f_{\rho}^{(k)}(s) ds.
\end{align}
This quantity depends only on $f_{\xi}^{(l)}(\tau)$, $\xi =1,2,\dots, \lambda -2$ and $l =0,1,2,\dots$. Since $l=0$ is also included, we split the latter into the following cases.
\begin{flushleft}
	\underline{Case 1: $i,j,k \neq 0$}
\end{flushleft}
Then, 
\begin{align*}
	& -\sum_{\substack{  i,j,k,=0 \\ i,j,k \neq 0 }}^{\infty} C_{ijk}^{(m)} \sum_{\substack{\mu,\nu,\rho=1 \\ \mu + \nu + \rho= \lambda  }}^{\lambda-2} \int_{0}^{\tau} \sin(\omega_{m}(\tau-s))f_{\mu}^{(i)}(s)f_{\nu}^{(j)}(s)f_{\rho}^{(k)}(s) ds = \\
	& -\sum_{\substack{  i,j,k=1 }}^{\infty} C_{ijk}^{(m)} \sum_{\substack{\mu,\nu,\rho=1 \\ \mu + \nu + \rho= \lambda  }}^{\lambda-2}f_{\mu}^{(i)}(0)f_{\nu}^{(j)}(0)f_{\rho}^{(k)}(0) \int_{0}^{\tau} \sin(\omega_{m}(\tau-s))\cos(\omega_{i}s)\cos(\omega_{j}s)\cos(\omega_{k}s) ds.
\end{align*}
As before, we assumed that $f_{\xi-2}^{(m)}(0) = 0$ for all $m =1,2,3,\dots$ and $\xi=2,3,\dots,\lambda-1$ which is equivalent to $f_{\xi}^{(m)}(0) = 0$ for all $m =1,2,3,\dots$ and $\xi=0,1,\dots,\lambda-3$. So, the only case where this sum is not zero due to the choice of the initial data is when $\mu = \nu = \rho = \lambda -2$. Then, $\lambda = \mu + \nu + \rho = 3( \lambda -2)$ which holds only when $\lambda=3$ and we initially assumed that $\lambda \geq 4$. In conclusion, this sum equals to zero.
\begin{flushleft}
	\underline{Case 2: ($i=0$ and $j,k \neq 0$) or ($j=0$ and $i,k \neq 0$) or ($k=0$ and $i,j \neq 0$)}
\end{flushleft}
Again, the quantity inside the sum in \eqref{symmquant2} is symmetric with respect to $i,j,k$. 
Due to this symmetry, we only consider one of these cases. In particular, we assume that $i=0$ and $j,k \neq 0$. Then, we have 
\begin{align*}
	& -\sum_{\substack{  j,k,=0 \\ j,k \neq 0 }}^{\infty} C_{0jk}^{(m)} \sum_{\substack{\mu,\nu,\rho=1 \\ \mu + \nu + \rho= \lambda  }}^{\lambda-2} \int_{0}^{\tau} \sin(\omega_{m}(\tau-s))f_{\mu}^{(0)}(s)f_{\nu}^{(j)}(s)f_{\rho}^{(k)}(s) ds = \\
	& -\sum_{\substack{  j,k=1 }}^{\infty} 
	C_{0jk}^{(m)} \sum_{\substack{\mu,\nu,\rho=1 \\ \mu + \nu + \rho= \lambda  }}^{\lambda-2} \int_{0}^{\tau} \sin(\omega_{m}(\tau-s)) \Bigg[
	f_{\mu}^{(0)}(0) \cos(s) + \sum_{\alpha=0}^{\left[\frac{\mu-1}{2} \right]} c_{\mu}^{\alpha} \cos((1+2\alpha)s) \Bigg] \cdot \\
	&\quad \quad\quad\quad\quad\quad\quad\quad\quad\quad\quad\quad \cdot f_{\nu}^{(j)}(0)f_{\rho}^{(k)}(0)\cos(\omega_{j}s) \cos(\omega_{k}s) 
	ds.
\end{align*}
The $f_{\mu}^{(0)}(0)$ are all free variables. As before, we assumed that $f_{\xi-2}^{(m)}(0) = 0$ for all $m =1,2,3,\dots$ and $\xi=2,3,\dots,\lambda-1$ which is equivalent to $f_{\xi}^{(m)}(0) = 0$ for all $m =1,2,3,\dots$ and $\xi=0,1,\dots,\lambda-3$. So, the only case where this sum is not zero due to the choice of the initial data is when $\nu = \rho = \lambda -2$. Then, $\lambda = \mu + \nu + \rho = \mu + 2(\lambda -2)$  which implies $\mu+\lambda=4$ and we initially assumed that $\lambda \geq 4$. Hence, the only possible case is $\mu=0$ only for $\lambda =4$ which contradicts the condition $\mu \geq 1$. In conclusion, the sum equals to zero.
\begin{flushleft}
	\underline{Case 3: ($i \neq 0$ and $j,k =0$) or ($j \neq 0$ and $i,k = 0$) or ($k \neq 0$ and $i,j = 0$)}
\end{flushleft}
Again, due to the symmetry, we only consider one of these cases. In particular, we assume that $i \neq 0$ and $j,k = 0$. Then, we compute
\begin{align*}
C_{i00}^{(m)}=\frac{2}{\pi} \int_{0}^{\pi} \frac{\sin^2(\omega_{0}\psi)\sin(\omega_{i}\psi)\sin(\omega_{m}\psi)}{\sin^2(\psi)}  d \psi =\frac{2}{\pi} \int_{0}^{\pi} \sin(\omega_{i}\psi)\sin(\omega_{m}\psi) d \psi =  \delta_{i}^{m}
\end{align*}
and
\begin{align*}
	& -\sum_{\substack{  i,j,k=0 \\ i\neq 0, j,k=0}}^{\infty} C_{ijk}^{(m)} \sum_{\substack{\mu,\nu,\rho=1 \\ \mu + \nu + \rho= \lambda  }}^{\lambda-2} \int_{0}^{\tau} \sin(\omega_{m}(\tau-s)) f_{\mu}^{(i)}(s)f_{\nu}^{(j)}(s)f_{\rho}^{(k)}(s) ds = \\
	& -\sum_{\substack{  i=1 }}^{\infty} C_{i00}^{(m)} \sum_{\substack{\mu,\nu,\rho=1 \\ \mu + \nu + \rho= \lambda  }}^{\lambda-2} \int_{0}^{\tau}  \sin(\omega_{m}(\tau-s)) f_{\mu}^{(i)}(s)f_{\nu}^{(0)}(s)f_{\rho}^{(0)}(s) ds = \\
	& - \sum_{\substack{\mu,\nu,\rho=1 \\ \mu + \nu + \rho= \lambda  }}^{\lambda-2} \int_{0}^{\tau}  \sin(\omega_{m}(\tau-s)) f_{\mu}^{(m)}(s)f_{\nu}^{(0)}(s)f_{\rho}^{(0)}(s) ds = \\
	& - \sum_{\substack{\mu,\nu,\rho=1 \\ \mu + \nu + \rho= \lambda  }}^{\lambda-2} 
	f_{\mu}^{(m)}(0)
	\int_{0}^{\tau}  \sin(\omega_{m}(\tau-s)) \cos(\omega_{m}s)
	\Bigg[
	f_{\nu}^{(0)}(0)\cos(s) + \sum_{\alpha=0}^{\left[\frac{\nu-1}{2} \right]} c_{\nu}^{\alpha} \cos((1+2\alpha)s)
	\Bigg] \cdot \\
	&\quad \quad\quad\quad\quad\quad\quad\quad\quad\quad\quad\quad \cdot \Bigg[
	f_{\rho}^{(0)}(0)\cos(s) + \sum_{\beta=0}^{\left[\frac{\rho-1}{2} \right]} c_{\rho}^{\beta} \cos((1+2\beta)s)
	\Bigg] ds.
\end{align*}
As before, we assumed that $f_{\xi-2}^{(m)}(0) = 0$ for all $m =1,2,3,\dots$ and $\xi=2,3,\dots,\lambda-1$ which is equivalent to $f_{\xi}^{(m)}(0) = 0$ for all $m =1,2,3,\dots$ and $\xi=0,1,\dots,\lambda-3$. So, the only case where this sum is not zero due to the choice of the initial data is when $\mu=  \lambda -2$. Then, $\lambda = \mu + \nu + \rho = \lambda -2 + \nu + \rho$  which implies $\nu + \rho=2$. However, $\nu,\rho \neq 0$ and hence the only possible choice is $(\nu,\rho)=(1,1)$. Hence, the sum above boils down to
 \begin{align*}
 	 - f_{\lambda-2}^{(m)}(0)
	\int_{0}^{\tau}  \sin(\omega_{m}(\tau-s)) \cos(\omega_{m}s)
	\Bigg[
	f_{1}^{(0)}(0)\cos(s) + c_{1}^{0} \cos(s)
	\Bigg]^2 ds.
 \end{align*} 
Recall from Step 1 that $f_{1}^{(0)}(0)=1$ and $c_{1}^{0}=0$. Therefore, the sum above is simplified even more to
\begin{align*}
	 - f_{\lambda-2}^{(m)}(0)
	\int_{0}^{\tau}  \sin(\omega_{m}(\tau-s)) \cos(\omega_{m}s)
	\cos^2(s)  ds
\end{align*}
from which a secular term occurs since
\begin{align*}
	& \int_{0}^{\tau} \sin(\omega_{m}(\tau-s)) \cos(\omega_{m}s)
	\cos^2(s)  ds = \\ 
	&-\frac{\omega_{m}}{8(\omega_{m}^2-1)} \cos(\omega_{m}\tau)
	+ \frac{\omega_{m}}{16(\omega_{m}-1)} \cos((\omega_{m}-2)\tau)\\
	&-\frac{\omega_{m}}{16(\omega_{m}+1)} \cos((\omega_{m}+2)\tau)
	+\frac{1}{4} \tau \sin(\omega_{m} \tau),
\end{align*}
for all $m \geq 1$.
\begin{flushleft}
	\underline{Case 4: $i,j,k =0$}
\end{flushleft}
As before, we compute $C_{000}^{(m)} = \delta_{0}^{m}=0$ for all $m\geq 1$ and hence the sum
\begin{align*}
	 &  -\sum_{\substack{ i,j,k,=0\\ i,j,k=0  } }^{\infty} C_{ijk}^{(m)} \sum_{\substack{\mu,\nu,\rho=1 \\ \mu + \nu + \rho= \lambda  }}^{\lambda-2} \int_{0}^{\tau} \sin(\omega_{m}(\tau-s))  f_{\mu}^{(i)}(s)f_{\nu}^{(j)}(s)f_{\rho}^{(k)}(s) ds = \\
	 &-C_{000}^{(m)} \sum_{\substack{\mu,\nu,\rho=1 \\ \mu + \nu + \rho= \lambda  }}^{\lambda-2} \int_{0}^{\tau}\sin(\omega_{m}(\tau-s))  f_{\mu}^{(0)}(s)f_{\nu}^{(0)}(s)f_{\rho}^{(0)}(s) ds
\end{align*}
vanishes.
\begin{flushleft}
	\underline{\text{Summary of Step 3B.}}
\end{flushleft}
In conclusion, we get that 
\begin{align*}
	f_{\lambda}^{(m)}(\tau)  &= 
	f_{\lambda}^{(m)}(0)\cos(\tau) 
	+ \omega_{m}   \frac{\theta_{2}}{2} f_{\lambda-2}^{(m)}(0)  \tau \sin(\omega_{m}\tau) \\
	&  - f_{\lambda-2}^{(m)}(0)
	\Bigg[-\frac{\omega_{m}}{8(\omega_{m}^2-1)} \cos(\omega_{m}\tau)
	+ \frac{\omega_{m}}{16(\omega_{m}-1)} \cos((\omega_{m}-2)\tau)\\
	& -\frac{\omega_{m}}{16(\omega_{m}+1)} \cos((\omega_{m}+2)\tau)
	+\frac{1}{4} \tau \sin(\omega_{m} \tau) \Bigg] \\
	& = f_{\lambda}^{(m)}(0)\cos(\tau) + f_{\lambda-2}^{(m)}(0) \Bigg[ \frac{\omega_{m}}{8(\omega_{m}^2-1)} \cos(\omega_{m}\tau)
	- \frac{\omega_{m}}{16(\omega_{m}-1)} \cos((\omega_{m}-2)\tau)\\
	& + \frac{\omega_{m}}{16(\omega_{m}+1)} \cos((\omega_{m}+2)\tau)+
	\left( \omega_{m}  \frac{\theta_{2}}{2} - \frac{1}{4} \right)\tau \sin(\omega_{m} \tau),
	\Bigg]
\end{align*}
for all $m \geq 1$. Recall Step 1 where $\theta_{2}=\frac{3}{4}$. Finally, since
\begin{align*}
	 \omega_{m}  \frac{\theta_{2}}{2} - \frac{1}{4}  = 
	  \frac{3}{8} \omega_{m} - \frac{1}{4} =
	  \frac{1}{8} \left(1+3 m \right) \neq 0,
\end{align*}
we are forced to choose $f_{\lambda-2}^{(m)}(0)=0$ as required to conclude
\begin{align*}
	f_{\lambda}^{(m)}(\tau) = 
	f_{\lambda}^{(m)}(0)\cos(\tau) 
\end{align*}
for all $m \geq 1$.
\end{proof}
Finally, we prove Theorem \ref{maintheorem2}.
\begin{proof}[Proof of Theorem \ref{maintheorem2}]
	Multiplying both sides of \eqref{ODE} with the time derivative of $G$ yields
\begin{align*}
	\frac{d}{d t}
	\left(
	 \left( \frac{d G(t)}{ dt } \right)^2 + G^2(t) + \frac{1}{2}G^4(t) 
	\right) = 0
\end{align*}
which implies the conservation of energy
\begin{align}\label{energyetimate}
	\left( \frac{d G(t)}{ dt } \right)^2 + G^2(t) + \frac{1}{2}G^4(t) = G^2(0) + \frac{1}{2}G^4(0) 
\end{align}
since $G^{\prime}(0)=0$. We set $X=G$ and $Y=G^{\prime}$ we write \eqref{ODE} as a dynamical system
\begin{align}\label{DS}
	\begin{cases}
		X^{\prime}(t)= Y(t), \\
		Y^{\prime}(t)= -X(t) \left( 1+ X^2(t)\right).
	\end{cases}
\end{align}
The solution defines a curve is the phase space $(X,Y)$ parametrized by $(X(t),Y(t))$ with tangent vector $(X^{\prime}(t),Y^{\prime}(t))$. The stationary points are points on the curve such that $(X^{\prime},Y^{\prime})=(0,0)$ and we see that the only (real) stationary point is $(0,0)$. The quantity
\begin{align*}
	\mathcal{E}(X(t),Y(t)):= Y^2(t)  + X^2(t) + \frac{1}{2}X^4(t) 
\end{align*}
is constant on each orbit $(X(t),Y(t))$ and equals to
\begin{align*}
	\mathcal{E}(X(t),Y(t)) = X_{0}^2 + \frac{1}{2}X_{0}^4
\end{align*}
where $X_{0}=X(0)$. We have that, for all $a \neq 0$, the level set
\begin{align*}
    \mathcal{E}_{a}&:=
	\left \{
	(X,Y) \in \mathbb{R}^2: Y^2  + X^2 + \frac{1}{2}X^4 = a^2 
	\right \}
\end{align*}
is connected, compact and $1-$dimensional sub-manifold of $\mathbb{R}^2$. Indeed, for $a \neq 0$, one can verify that the map
\begin{align*}
	\gamma_{a}: \mathbb{R} \longrightarrow \mathcal{E}_{a}:=\gamma_{a} \left(
	\mathbb{R}
	\right) \subset \mathbb{R}^2
\end{align*}
defined by
\begin{align*}
	\gamma_{a}(t) = \left(
	\frac{a \sqrt{2}  \cos(t)}{\sqrt{1+\sqrt{1+2a^2 \cos^2(t)}}},
	a \sin(t)
	\right),\quad t \in \mathbb{R}.
\end{align*}
is a smooth parametrization of the curve. For all $t \in \mathbb{R}$, we compute
\begin{align*}
	\left| \gamma_{a}^{\prime} (t) \right|^2 & =
	a^2 \left(
	\cos^2(t) + 
	\frac{1}{2} 
	\left(
	\frac{1}{1+2a^2 \cos^2(t)}+
	\frac{1}{\sqrt{1+2a^2 \cos^2(t)}}
	\right)
	\sin^2(t)
	\right)
\end{align*}
which cannot be zero so $\gamma_{a}\left(\mathbb{R} \right):=\mathcal{E}_{a}$ is a $1-$dimensional sub-manifold of $\mathbb{R}^2$. Clearly, the curve is closed since
\begin{align*}
	\gamma_{a}(t) = \gamma_{a}(t+2\pi),\quad  \forall t \in \mathbb{R}
\end{align*}
and so
\begin{align*}
	\mathcal{E}_{a}:=\gamma_{a}\left(
	\mathbb{R}
	\right) =
	\gamma_{a}\left(
	[0,2\pi] \cap \mathbb{R}
	\right).
\end{align*}
Since $[0,2\pi]$ is connected subset of $\mathbb{R}$ and $\gamma_{a}$ is continuous we get that its image is also connected. The fact that the image is a compact set also follows from the continuity of $\gamma_{a}$ and the compactness of $[0,2\pi]\cap \mathbb{R}$.  Furthermore, for $a \neq 0$, the unique stationary point $(0,0)$ does not belong in $\mathcal{E}_{a}$. Hence, we obtain that for all $a \neq 0$, $\mathcal{E}_{a}$ is homomorphic to a circle and constitutes a periodic orbit. Since the choice $a=0$ is equivalent to $X_{0}=0$, we conclude that for all $X_{0} \neq 0$ the corresponding orbit is periodic. The following picture illustrates the periodic orbits in the phrase space.
\begin{figure}[h]
    \centering
    \includegraphics[width=0.6\textwidth]{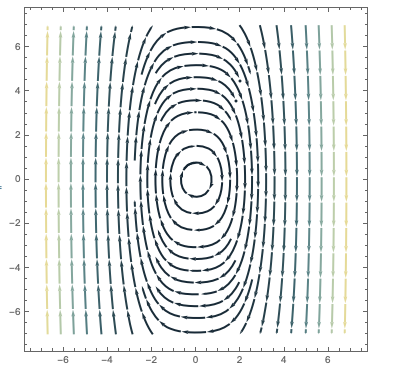}
    \caption{Phase space of the dynamical system \eqref{DS} and time periodic solutions.}
    \label{fig:mesh1}
\end{figure}
\end{proof}
\appendix
\section{Local well-posedness in the energy space}\label{LWP}
In the following lines, we prescribe initial data $(f_{0},f_{1})$ on the $\{t=0\}$ slice and establish the local well-posedness in the energy space for the initial value problem
\begin{align}\label{systemforLWP}
\begin{cases}
	& - \partial_{t}^2 f(t,\psi) +  \partial_{\psi}^2 f(t,\psi) = \frac{f^3(t,\psi)}{\sin^2(\psi)}, \quad (t,\psi) \in I \times (0,\pi), \\
	& f(0,\psi) = f_{0}(\psi),~\partial_{t}f(0,\psi) = f_{1}(\psi),\quad \psi \in (0,\pi) \\
	& f(t,0) = f(t,\pi) = 0,\quad t \in I 
\end{cases}
\end{align}
where $I$ is an interval in $\mathbb{R}$ with $0 \in I$.
The essential ingredients are the embeddings $L^{6}\left(\mathbb{S}^3 \right) \hookrightarrow L^{p}\left(\mathbb{S}^3 \right)$ valid for all $p \in \left[1,6 \right]$ together with the Sobolev inequality on $\mathbb{S}^3$ (Theorem 4.19, page 80, \cite{MR1481970}) 
\begin{align}\label{SobolevInequality}
\|u \|_{L^{p}\left(\mathbb{S}^3 \right)  } \lesssim	\|u \|_{L^{6}\left(\mathbb{S}^3 \right)  } \lesssim \|\nabla u \|_{L^{2}\left(\mathbb{S}^3 \right)}+
\|u \|_{L^{2}\left(\mathbb{S}^3 \right)},\quad  \forall p \in \left[1,6 \right], \quad \forall u \in H^{1}\left(\mathbb{S}^3 \right)
\end{align}
which is written in terms of functions $u=u(\psi,\theta,\phi)$ on the 3-sphere. Once restricted to rotationally symmetric functions $u(\psi,\theta,\phi)=v(\psi)$ and set $f(\psi)=\sin(\psi)v(\psi)$ as above, these estimates yield the following Hardy-Sobolev inequalities
\begin{align}\label{Hardywithp}
\left(	\int_{0}^{\pi} \left|f(\psi) \right|^{p} \sin^{2-p}(\psi)  d \psi  \right)^{\frac{1}{p}}  \lesssim \left(	\int_{0}^{\pi} \left|f^{\prime}(\psi) \right|^{2}  d \psi  \right)^{\frac{1}{2}} + \left(	\int_{0}^{\pi} \left|f(\psi) \right|^{2}  d \psi  \right)^{\frac{1}{2}}, 
\end{align}
valid for all $p \in \left[1,6 \right]$ and $f \in H^{1}[0,\pi]$. Indeed,
\begin{align*}
	\|u \|_{L^{p}\left(\mathbb{S}^3 \right)  }^p 
	& = \int_{0}^{\pi}\int_{0}^{\pi}\int_{0}^{2\pi} \left|u(\psi,\theta,\phi) \right|^{p} \sin^2(\psi) \sin(\theta) d \psi d\theta d\phi \\
	& \simeq \int_{0}^{\pi} \left|v(\psi) \right|^{p} \sin^2(\psi)  d \psi \\
	& \simeq \int_{0}^{\pi} \left|f(\psi) \right|^{p} \sin^{2-p}(\psi)  d \psi, \\
	\| \nabla u \|_{L^{2}\left(\mathbb{S}^3 \right)  }^2 
	& = \int_{0}^{\pi}\int_{0}^{\pi}\int_{0}^{2\pi} \left|\nabla u(\psi,\theta,\phi) \right|^{2} \sin^2(\psi) \sin(\theta) d \psi d\theta d\phi \\
	& \simeq \int_{0}^{\pi} \left| v^{\prime}(\psi) \right|^{2} \sin^2(\psi)  d \psi \\
	& \simeq \int_{0}^{\pi} \left|f^{\prime}(\psi)-\frac{\cos(\psi)}{\sin(\psi)}f(\psi) \right|^{2}  d \psi \\
	& \lesssim \int_{0}^{\pi} \left|f^{\prime}(\psi) \right|^2 d\psi + \int_{0}^{\pi}\left| \frac{f(\psi)}{\sin(\psi)} \right|^{2}  d \psi \\
	& \lesssim \int_{0}^{\pi} \left|f^{\prime}(\psi) \right|^2 d\psi
 \end{align*}
where we used the standard Hardy inequality at the last step together with the boundary conditions in \eqref{systemforLWP}. For any Schwartz function $f \in \mathcal{S}\left( I \times (0,\pi) \right)$, one can multiply the equation with $\partial_{t} f$ and integrate by parts to obtain the conservation of energy
\begin{align*}
	E(f[t]) = E(f[0]), 
\end{align*}
for all times, where the energy reads
\begin{align*}
	E(f[t]):=\frac{1}{2}  \int_{0}^{\pi} \left(
	 \left|\partial_{t} f(t,\psi) \right|^2 +
	  \left|\partial_{\psi} f(t,\psi) \right|^2
	 + \frac{1}{2} \left| \frac{f^2(t,\psi)}{\sin(\psi)}
	 \right|^2 ~\right)d \psi
\end{align*}
and $f[t]:=(f(t,\cdot),\partial_{t}f(t,\cdot))$ denotes the full state of $f(t,\cdot)$, that is position and velocity at time $t$. The conserved energy distinguishes the energy space 
\begin{align*}
	H^{1}[0,\pi] \times L^2[0,\pi]
\end{align*}
that is the function space of initial data $f[0]$ for which the energy is finite. Indeed, the first two terms in 
\begin{align*}
	E(f[0])=\frac{1}{2}  \int_{0}^{\pi} \left(
	 \left|f_{1}(\psi) \right|^2 +
	  \left| f_{0}^{\prime}(\psi) \right|^2
	 + \frac{1}{2} \left| \frac{f_{0}^2(\psi)}{\sin(\psi)}
	\right|^2~ \right) d \psi
\end{align*}
are clearly bounded for all initial data $f[0]=(f_{0},f_{1}) \in H^{1} [0,\pi]\times L^2[0,\pi]$. The third term is also bounded by the $H^{1}-$norm of $f_{0}$ by \eqref{Hardywithp} with $p=4$. 
To establish the local well-posedness for the initial value problem \eqref{systemforLWP} in the energy space, we first define 
\begin{align*}
	f(t,\psi)=\cos(t |\nabla|)f_{0}(\psi) + 
		\frac{\sin(t |\nabla|)}{|\nabla|}f_{1}(\psi)
\end{align*}
the solution to the free wave equation
\begin{align*}
	\begin{cases}
	& - \partial_{t}^2 f(t,\psi) +  \partial_{\psi}^2 f(t,\psi) = 0, \quad (t,\psi) \in I \times (0,\pi), \\
	& f(0,\psi) = f_{0}(\psi),~\partial_{t}f(0,\psi) = f_{1}(\psi),\quad \psi \in (0,\pi) \\
\end{cases}
\end{align*}
where
\begin{align*}
\cos(t |\nabla|)f_{0}(\psi):= \frac{1}{2} \left(
f_{0}(\psi+t) +f_{0}(\psi-t)
\right)  
\end{align*}
and
\begin{align*}
\frac{\sin(t |\nabla|)}{|\nabla|}f_{1}(\psi):=
\frac{1}{2} \int_{\psi-t}^{\psi+t} f_{1}(y) dy 
\end{align*}
are the wave propagators acting on $f_{0}$ and $f_{1}$ respectively.
For all $t$ such that $0\leq \psi \pm t \leq \pi$ for all $\psi \in [0,\pi]$, the following bounds hold
\begin{align} \label{wavepropagator}
	 \left \| \cos(t |\nabla|)f_{0} \right \|_{H^{1}[0,\pi]} \lesssim 
	 \left \| f_{0} \right \|_{H^{1}[0,\pi]}
	,\quad \left \| \frac{\sin(t |\nabla|)}{|\nabla|}f_{1}\right \|_{H^{1}[0,\pi]} \lesssim  \left \| f_{1} \right \|_{L^2[0,\pi]}.
\end{align}
Second, we define the notion of a weak solution to the non-linear problem: a function $f=f(t,\psi)$ is a weak solution to the initial value problem \eqref{systemforLWP} if and only if
	\begin{align*}
		f = \mathcal{K}_{(f_{0},f_{1})}(f)
	\end{align*}
	where
	\begin{align*}
		\mathcal{K}_{(f_{0},f_{1})}(f)(t,\psi)= \
		\cos(t |\nabla|)f_{0}(\psi) + 
		\frac{\sin(t |\nabla|)}{|\nabla|}f_{1}(\psi) - \int_{0}^{t} \frac{\sin((t-s) |\nabla|)}{|\nabla|} \frac{f^3(s,\psi)}{\sin^2(\psi)} ds.
	\end{align*}
Finally, we define the function space $(X_{T},\| \cdot \|_{X_{T}})$ where
	\begin{align*}
		X_{T}:= C \left( 
		[0,T], H^{1}[0,\pi]\right),\quad \| f \|_{X_{T}}:= \sup_{t \in [0,\pi]} \|f(t,\cdot) \|_{H^{1}[0,\pi]}
	\end{align*}
	and
	\begin{align*}
		\overline{\mathcal{B}}_{X_{T}}(0;R):= \{f \in X_{T}: \| f \|_{X_{T}} \leq R \}
	\end{align*}
the closed ball in $X_{T}$ of radius $R$ centered at zero. Now, we prove the following result.
\begin{prop}
	Let $f[0]=(f_{0},f_{1}) \in H^1 [0,\pi] \times L^2 [0,\pi]$, $R:=c \|f[0] \|_{H^1 [0,\pi] \times L^2 [0,\pi]}$ and $T= c^{-1} R^{-2}$ for a sufficiently large positive constant $c$. Then, there exists a unique element $f \in \overline{\mathcal{B}}_{X_{T}}(0;R)$ such that 
	$f = \mathcal{K}_{ f[0] }(f)$. Furthermore, the map $f[0] \longmapsto f$ is Lipschitz continuous.
\end{prop}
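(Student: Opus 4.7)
The plan is to run a Banach fixed-point argument for the map $\mathcal{K}_{f[0]}$ on the closed ball $\overline{\mathcal{B}}_{X_T}(0;R)$. The first step is to extract the two nonlinear estimates that drive everything: for $f, g \in H^1[0,\pi]$ satisfying the Dirichlet condition,
\begin{align*}
\left\| \frac{f^3}{\sin^2(\cdot)} \right\|_{L^2[0,\pi]} &\lesssim \|f\|_{H^1[0,\pi]}^3, \\
\left\| \frac{f^3-g^3}{\sin^2(\cdot)} \right\|_{L^2[0,\pi]} &\lesssim \|f-g\|_{H^1[0,\pi]}\bigl(\|f\|_{H^1[0,\pi]}^2+\|g\|_{H^1[0,\pi]}^2\bigr).
\end{align*}
The first one is immediate from the Hardy–Sobolev inequality \eqref{Hardywithp} with $p=6$, since the left-hand side squared equals $\int_0^\pi |f|^6/\sin^4\,d\psi$. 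For the difference estimate I would factor $f^3-g^3=(f-g)(f^2+fg+g^2)$, apply Hölder with exponents $(3,3/2)$ to split the weight $1/\sin^4$ as $1/\sin^{4/3}\cdot 1/\sin^{8/3}$ (i.e.\ pull out $(f-g)^6/\sin^4$ and $(f^2+fg+g^2)^3/\sin^4$), and then invoke \eqref{Hardywithp} with $p=6$ on each factor after the pointwise bound $|f^2+fg+g^2|^3\lesssim |f|^6+|g|^6$.

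Armed with these bounds, combined with the wave-propagator estimate \eqref{wavepropagator}, the Duhamel formula yields
\begin{align*}
\|\mathcal{K}_{f[0]}(f)\|_{X_T} &\leq C_0\|f[0]\|_{H^1\times L^2}+C_1\,T\,\|f\|_{X_T}^3, \\
\|\mathcal{K}_{f[0]}(f)-\mathcal{K}_{f[0]}(g)\|_{X_T} &\leq C_1\,T\,\bigl(\|f\|_{X_T}^2+\|g\|_{X_T}^2\bigr)\|f-g\|_{X_T}.
\end{align*}
Substituting the prescribed $R=c\|f[0]\|_{H^1\times L^2}$ and $T=c^{-1}R^{-2}$ turns these into
$\|\mathcal{K}_{f[0]}(f)\|_{X_T}\leq C_0 R/c + C_1 R/c$ and contraction constant $\leq 2C_1/c$. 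Choosing $c$ sufficiently large makes $\mathcal{K}_{f[0]}$ a strict contraction on $\overline{\mathcal{B}}_{X_T}(0;R)$ into itself, so the Banach fixed-point theorem delivers a unique fixed point $f$ in that ball.

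For Lipschitz continuity of the data-to-solution map, given two fixed points $f,g$ for data $f[0],g[0]$ I would subtract the two Duhamel formulas, apply \eqref{wavepropagator} to the linear part and the difference nonlinear estimate to the Duhamel integral, and absorb the $C_1 T R^2\,\|f-g\|_{X_T}$ term on the left using that this quantity is $\leq 1/2$ by construction; this yields $\|f-g\|_{X_T}\leq 2C_0\|f[0]-g[0]\|_{H^1\times L^2}$. The main obstacle, and essentially the only non-routine step, is the weighted cubic estimate near the boundary $\psi=0,\pi$, where the singular weight $1/\sin^2(\psi)$ forces the use of the critical endpoint $p=6$ in the Hardy–Sobolev inequality \eqref{Hardywithp} and leaves no room for slack; a subcritical estimate would fail to control $f^3/\sin^2$ in $L^2$.
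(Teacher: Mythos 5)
Your proposal is correct and follows essentially the same route as the paper: a Banach fixed-point argument on $\overline{\mathcal{B}}_{X_{T}}(0;R)$, with the cubic bound and the difference bound both obtained from the Hardy--Sobolev inequality \eqref{Hardywithp} at the endpoint $p=6$ via exactly the same H\"older splitting of the singular weight (the paper writes it as $\sin^{-2/3}\cdot\sin^{-4/3}$ with an $L^6\times L^3$ pairing, which is your $(3,3/2)$ splitting after squaring). Your handling of the Lipschitz dependence on the data --- subtracting the two Duhamel formulas and absorbing the $C_1TR^2\|f-\tilde f\|_{X_T}$ term --- is in fact more careful than the paper's, which at that step only estimates the linear propagator differences.
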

\begin{proof}
	Let $f[0]=(f_{0},f_{1}) \in  H^1 [0,\pi] \times L^2 [0,\pi]$, $R:=c \|f[0] \|_{H^1 [0,\pi] \times L^2 [0,\pi]}$ and $T = c^{-1} R^{-2}$. One can easily show that $(X_{T},\| \cdot \|_{X_{T}})$ is a complete metric space and $\overline{\mathcal{B}}_{X_{T}}(0;R)$ is a closed subset. First, we show that $\mathcal{K}_{ f[0] }$ maps the closed ball to its self. To this end, we assume that $f \in \overline{\mathcal{B}}_{X_{T}}(0;R)$ and use the estimates for the wave propagators from \eqref{wavepropagator} together with the Hardy-Sobolev inequality \eqref{Hardywithp} with $p=6$ to estimate
	\begin{align*}
		\left \|
		\mathcal{K}_{ f[0] }(f)(t,\cdot)
		\right \|_{ H^{1} [0,\pi] } 
		&  \lesssim
		\|f[0] \|_{\dot{H}^1 [0,\pi] \times L^2 [0,\pi]}
+ \int_{0}^{T} 
\left \|
\frac{f^3(s,\cdot)}{\sin^2}
\right \|_{L^2[0,\pi]} ds \\
&  \lesssim
		\frac{R}{c}
+ \int_{0}^{T} 
\left \|
f(s,\cdot)
\right \|_{H^1[0,\pi]}^3 ds \\
&  \lesssim
		\frac{R}{c}
+ T
\left \|
f
\right \|_{X_{T}}^3 
\leq \frac{R}{c} + T R^{3}
 =
\frac{2R}{c}
 \leq
R
\end{align*}
by choosing $c$ sufficiently large. 
Next, we establish a local Lipschitz-type estimate
\begin{align}\label{Lipschitz}
	\sup_{s \in [0,T]} \left \|
		\frac{ 
		f^3(s,\cdot)-
		\tilde{f}^3(s,\cdot)
		}{
		\sin^2
		}  
		\right \|_{ L^2[0,\pi] } \lesssim 
		R^2
		 \left \|
		f-
		\tilde{f}  
		\right \|_{ X_{T} }
\end{align}
for all $f, \tilde{f} \in \overline{\mathcal{B}}_{X_{T}}(0;R)$. This estimate follows from Holder's inequality together with the Hardy-Sobolev inequality \eqref{Hardywithp} with $p=6$. Indeed, for any $s\in [0,T]$ and $f, \tilde{f} \in \overline{\mathcal{B}}_{X_{T}}(0;R)$, we have
\begin{align*}
	& \left \|
		\frac{ 
		f^3(s,\cdot)-
		\tilde{f}^3(s,\cdot)
		}{
		\sin^2
		}  
		\right \|_{ L^2[0,\pi] } = \\
	& \left \|
		\frac{ 
		\left(
		f(s,\cdot)-
		\tilde{f}(s,\cdot)
		\right)
		\left(
		f^2(s,\cdot)+
		\tilde{f}(s,\cdot)f(s,\cdot)+
		\tilde{f}^2(s,\cdot)
		\right)
		}{
		\sin^{\frac{2}{3}} \sin^{\frac{4}{3}}
		}  
		\right \|_{ L^2[0,\pi] } \leq 	\\
	& \left \|
		\frac{ 
		f(s,\cdot)-
		\tilde{f}(s,\cdot)
		}{\sin^{\frac{2}{3}} }
		\right \|_{ L^6[0,\pi] }
		\left \|
		\frac{ 
		f^2(s,\cdot)+
		\tilde{f}(s,\cdot)f(s,\cdot)+
		\tilde{f}^2(s,\cdot)
		}{\sin^{\frac{4}{3}} }
		\right \|_{ L^3[0,\pi] } \leq  	\\
	& \left \|
		\frac{ 
		f(s,\cdot)-
		\tilde{f}(s,\cdot)
		}{\sin^{\frac{2}{3}} }
		\right \|_{ L^6[0,\pi] }
		\left(
		\left \|
		\frac{ 
		f^2(s,\cdot)
		}{\sin^{\frac{4}{3}} }
		\right \|_{ L^3[0,\pi] } +
		\left \|
		\frac{ 
		\tilde{f}(s,\cdot)f(s,\cdot)
		}{\sin^{\frac{2}{3}} \sin^{\frac{2}{3}} }
		\right \|_{ L^3[0,\pi] } +
		\left \|
		\frac{ 
		\tilde{f}^2(s,\cdot)
		}{\sin^{\frac{4}{3}} }
		\right \|_{ L^3[0,\pi] }
		\right)
		\leq \\
	& \left \|
		\frac{ 
		f(s,\cdot)-
		\tilde{f}(s,\cdot)
		}{\sin^{\frac{2}{3}} }
		\right \|_{ L^6[0,\pi] }
		\left(
		\left \|
		\frac{ 
		f^2(s,\cdot)
		}{\sin^{\frac{4}{3}} }
		\right \|_{ L^3[0,\pi] } +
		\left \|
		\frac{ 
		\tilde{f}(s,\cdot)
		}{\sin^{\frac{2}{3}} }
		\right \|_{ L^6[0,\pi] }
		\left \|
		\frac{ 
		f(s,\cdot)
		}{\sin^{\frac{2}{3}} }
		\right \|_{ L^6[0,\pi] } +
		\left \|
		\frac{ 
		\tilde{f}^2(s,\cdot)
		}{\sin^{\frac{4}{3}} }
		\right \|_{ L^3[0,\pi] }
		\right)
		\lesssim \\
	& \left \| 
	f(s,\cdot)- \tilde{f}(s,\cdot)
	\right \|_{H^{1}[0,\pi]}	
	\left(
	\left \| 
	f(s,\cdot)
	\right \|_{H^{1}[0,\pi]}	^2 +
	\left \| 
	f(s,\cdot)
	\right \|_{H^{1}[0,\pi]}	
	\left \| 
	\tilde{f}(s,\cdot)
	\right \|_{H^{1}[0,\pi]}	 +
	\left \| 
	\tilde{f}(s,\cdot)
	\right \|_{H^{1}[0,\pi]}	^2
	\right)	\leq \\
	& \left \| 
	f- \tilde{f}
	\right \|_{ X_{T} }	
	\left(
	\left \| 
	f
	\right \|_{ X_{T} }^2 +
	\left \| 
	f
	\right \|_{X_{T} }	
	\left \| 
	\tilde{f}
	\right \|_{X_{T} }	 +
	\left \| 
	\tilde{f}
	\right \|_{X_{T} }	^2
	\right)	\leq 3 R^2 \left \| 
	f- \tilde{f}
	\right \|_{ X_{T} }.	
\end{align*}
Next, we show that $\mathcal{K}_{f[0]}$ is a contraction map. We pick again two elements $f, \tilde{f} \in \overline{\mathcal{B}}_{X_{T}}(0;R)$ and use the Lipschitz-type bound \eqref{Lipschitz} to estimate
\begin{align*}
	\left \|
		\mathcal{K}_{ f[0] }(f)(t,\cdot)-
		\mathcal{K}_{ f[0] }(\tilde{f} )(t,\cdot)
		\right \|_{ H^{1} [0,\pi] } 
		& \lesssim 
		 \int_{0}^{T} \left \|
		\frac{f^3(s,\cdot)-
		\tilde{f}^3(s,\cdot)}{\sin^2}   
		\right \|_{ L^{2} [0,\pi] } ds \\
	& \lesssim  T R^2  
		 \left \|
		f-
		\tilde{f}  
		\right \|_{ X_{T} }	\leq  \frac{1}{c}  
		 \left \|
		f-
		\tilde{f}  
		\right \|_{ X_{T} }.
\end{align*} 
By choosing $c$ sufficiently large, we get
\begin{align*}
	\left \|
		\mathcal{K}_{ f[0] }
		\right \|_{ \text{Lip} \left( \overline{\mathcal{B}}_{X_{T}}(0;R) \right) }:=
		\sup_{\substack{ f,\tilde{f} \in \overline{\mathcal{B}}_{X_{T}}(0;R) \\ f \neq \tilde{f} }}
		\frac{
		\left \|
		\mathcal{K}_{ f[0] }(f)-
		\mathcal{K}_{ f[0] }(\tilde{f} )
		\right \|_{X_{T} }
		}{
		 \left \|
		f-
		\tilde{f}  
		\right \|_{ X_{T} }
		}
		 \leq \frac{1}{2}.
\end{align*}
Now, by Banach's fixed point theorem, there exists a unique element $f \in \overline{\mathcal{B}}_{X_{T}}(0;R)$ such that 
	$f = \mathcal{K}_{ f[0] }(f)$. Finally, we prove that the map $f[0] \longmapsto f$ is Lipschitz continuous. To this end, we choose two initial data $f[0]=(f_{0},f_{1})$ and $\tilde{f}[0]=(\tilde{f}_{0},\tilde{f}_{1})$ in the energy space which produce two unique solutions existing within the time intervals $[0,T]$ and $[0,\tilde{T}]$ respectively and
	\begin{align*}
		 f = \mathcal{K}_{ f[0] }(f), \quad f  \in \overline{\mathcal{B}}_{X_{T}}(0;R), \quad 
		\tilde{f} = \mathcal{K}_{ \tilde{f}[0] }(\tilde{f}), \quad \tilde{f}  \in \overline{\mathcal{B}}_{X_{T}}(0;\tilde{R})
	\end{align*}
	 for some positive $R$ and $\tilde{R}$. Now, these two solutions coincide in the closed ball $\overline{\mathcal{B}}_{X_{T}}(0;r)$, for $r:= \min \{R,\tilde{R} \}$, due the uniqueness established above. Then, for all $t \in [0,\min \{T,\tilde{T} \}]$, we use \eqref{wavepropagator} to conclude
	 \begin{align*}
	 	\left \|
	 	f(t,\cdot) - \tilde{f}(t,\cdot)
	 	\right \|_{H^{1}[0,\pi]} & =
	 	\left \|
	 	\mathcal{K}_{ f[0] }(f)(t,\cdot) -
	 	\mathcal{K}_{ \tilde{f}[0] }(\tilde{f})(t,\cdot)
	 	\right \|_{H^{1}[0,\pi]} \\
	 	& \leq 
	 	\left \|
	 	\cos(t|\nabla|)(f_{0}-\tilde{f}_{0})
	 	\right \|_{H^{1}[0,\pi]} +
	 	\left \|
	 	\frac{\sin(t|\nabla|)}{|\nabla|} (f_{1}-\tilde{f}_{1})
	 	\right \|_{H^{1}[0,\pi]} \\
	 	& \lesssim
	 	\left \|
	 	f_{0}-\tilde{f}_{0}
	 	\right \|_{H^{1}[0,\pi]} +
	 	\left \|
	 	f_{1}-\tilde{f}_{1}
	 	\right \|_{L^{2}[0,\pi]} \\
	 	& =
	 	\left \|
	 	f[0]-\tilde{f}[0]
	 	\right \|_{H^{1}[0,\pi] \times L^{2}[0,\pi]}.
	 \end{align*} 
\end{proof}
\section{Closed formula for the interaction coefficients}\label{formulaC}
In this section, we establish a necessary closed formula for the interaction coefficients defined in \eqref{FourierConstants} which was used to produce the iterations in section \ref{iterations}. We use the notation
\begin{align*}
\mathbbm{1} \left( \text{condition} \right) = 
	\begin{cases}
	1, \text{ if the condition is satisfied} \\
	0, \text{ otherwise} \\
	\end{cases}
\end{align*}
\begin{lemma}\label{ClosedFormulaC}
For all $i,j,k,m=0,1,2,\dots$ such that $i \leq j$ and $k \leq m$ we have
\begin{align*}
	C_{ijk}^{(m)}= 
	\sum_{\alpha=0}^{i}\sum_{\beta=0}^{k} 
		  \mathbbm{1} \left( 2(\alpha-\beta) = (m-k)-(j-i) \right).
\end{align*}
\begin{proof}
	Pick any $i,j,k,m=0,1,2,\dots$ such that $i \leq j$ and $k \leq m$. Then, we use the definition of the Chebyshev polynomial of second kind,
	\begin{align*}
		 \sin(\omega_{n}\psi) = \sin(\psi) U_{n}(\cos(\psi)),\quad n =0,1,2,\dots
	\end{align*}
together with the analog to the addition theorem
\begin{align*}
U_{p}(y)U_{q}(y)
= \sum_{\substack{  r= q-p \\ \text{step 2} } }^{p+q} U_{r}(y)
=\sum_{s=0}^{p} U_{q-p+2s}(y),\quad p,q =0,1,2,\dots,\quad p \leq q
\end{align*}	
to write
	\begin{align*}
		C_{ijk}^{(m)} & = \frac{2}{\pi} \int_{0}^{\pi} \frac{\sin(\omega_{i}\psi)\sin(\omega_{j}\psi)\sin(\omega_{k}\psi)\sin(\omega_{m}\psi)}{\sin^2(\psi)} d \psi \\
		&= \frac{2}{\pi} \int_{0}^{\pi} \sin^2(\psi)U_{i}(\cos(\psi))U_{j}(\cos(\psi))U_{k}(\cos(\psi))U_{m}(\cos(\psi))  d \psi \\
		&= \frac{2}{\pi} \int_{-1}^{1} U_{i}(y)U_{j}(y)U_{k}(y)U_{m}(y) \sqrt{1-y^2}  d y  \\
		&= \frac{2}{\pi} \int_{-1}^{1} \left( 
		\sum_{\alpha=0}^{i} U_{j-i+2\alpha}(y)
		\right) 
		\left( 
		\sum_{\beta=0}^{k} U_{m-k+2\beta}(y)
		\right)
		 \sqrt{1-y^2}  d y \\
		 &= \frac{2}{\pi}\sum_{\alpha=0}^{i}\sum_{\beta=0}^{k} 
		  \int_{-1}^{1} 
		 U_{j-i+2\alpha}(y)
		U_{m-k+2\beta}(y)
		 \sqrt{1-y^2}  d y \\
		 &= \frac{2}{\pi}\sum_{\alpha=0}^{i}\sum_{\beta=0}^{k} \frac{\pi}{2}
		  \mathbbm{1} \left( j-i+2\alpha = m-k+2\beta \right) \\
		  &= \sum_{\alpha=0}^{i}\sum_{\beta=0}^{k} 
		  \mathbbm{1} \left( 2(\alpha-\beta) = m-k-j+i \right).
	\end{align*}
\end{proof}
\end{lemma}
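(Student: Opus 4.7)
The plan is to reduce the integral defining $C_{ijk}^{(m)}$ to a product of four Chebyshev polynomials of the second kind against the natural weight, and then apply the orthogonality relation. Since $e_n(\psi)=\sin(\omega_n\psi)=\sin((n+1)\psi)$ and the Chebyshev polynomials $U_n$ satisfy the identity $\sin((n+1)\psi)=\sin(\psi)\,U_n(\cos\psi)$, the factor $\sin^2(\psi)$ in the denominator is cancelled by exactly two of the four sines, leaving
\[
C_{ijk}^{(m)}=\frac{2}{\pi}\int_{0}^{\pi}\sin^{2}(\psi)\,U_{i}(\cos\psi)U_{j}(\cos\psi)U_{k}(\cos\psi)U_{m}(\cos\psi)\,d\psi.
\]
The first step is to perform the change of variables $y=\cos\psi$ on this integral, which turns it into an integral over $[-1,1]$ of a polynomial in $y$ against the weight $\sqrt{1-y^{2}}$.

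The second step is to pair the four $U$'s into two pairs, using the product-to-sum identity
\[
U_{p}(y)U_{q}(y)=\sum_{s=0}^{p}U_{q-p+2s}(y),\qquad p\leq q,
\]
applied once with $(p,q)=(i,j)$ (allowed because $i\leq j$ is assumed) and once with $(p,q)=(k,m)$ (allowed because $k\leq m$ is assumed). This rewrites the integrand as the product of two finite sums of single $U_{\cdot}(y)$'s, which after expanding the product becomes a double sum indexed by $(\alpha,\beta)\in\{0,\dots,i\}\times\{0,\dots,k\}$ of monomials $U_{j-i+2\alpha}(y)\,U_{m-k+2\beta}(y)$.

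The third step is to invoke the orthogonality relation
\[
\int_{-1}^{1}U_{a}(y)\,U_{b}(y)\sqrt{1-y^{2}}\,dy=\frac{\pi}{2}\,\delta_{ab},
\]
which collapses each term in the double sum to either $0$ or $\pi/2$. The normalising factor $2/\pi$ in front cancels the $\pi/2$, so each surviving term contributes exactly $1$, and the pair $(\alpha,\beta)$ survives precisely when $j-i+2\alpha=m-k+2\beta$, i.e.\ when $2(\alpha-\beta)=(m-k)-(j-i)$. Summing the indicator of this condition over $(\alpha,\beta)$ yields the claimed closed formula.

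I do not foresee a real obstacle here; the proof is entirely computational once the correct identities are identified. The only point that requires a little care is the ordering hypothesis $i\leq j$ and $k\leq m$, which is exactly what makes the product-to-sum formula for $U_{p}U_{q}$ applicable without having to split cases; if one instead had, say, $j<i$, the formula would need to be symmetrised in the pair, but since $C_{ijk}^{(m)}$ is symmetric under swapping $(i,j)$ or $(k,m)$ this is only a relabelling and does not affect the final count.
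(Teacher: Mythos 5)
Your proposal is correct and follows essentially the same route as the paper's proof: converting the sines to Chebyshev polynomials $U_n$, changing variables to $y=\cos\psi$, applying the product-to-sum identity once to the pair $(i,j)$ and once to $(k,m)$, and finishing with the orthogonality relation $\int_{-1}^{1}U_a(y)U_b(y)\sqrt{1-y^2}\,dy=\tfrac{\pi}{2}\delta_{ab}$. Your closing remark on the symmetry of $C_{ijk}^{(m)}$ under swapping within each pair is a correct observation that the paper leaves implicit.
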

\begin{remark}
	Note that the formula above and also be written as
	\begin{align*}
		C_{ijk}^{(m)}=
	\sum_{\substack{ p=j-i \\ \text{step }2} }^{j+i}
	\sum_{\substack{ q=m-k \\ \text{step }2} }^{m+k}
       \mathbbm{1} \left( p = q \right),
	\end{align*}
	for all $i,j,k,m=0,1,2,\dots$ such that $i \leq j$ and $k \leq m$.
\end{remark}

\nocite{*}

\end{document}